\let\originallhook\lhook
\let\originalrhook\rhook
\let\lhook\originallhook
\let\rhook\originalrhook
\newtheorem{theorem}{Theorem}[section]
\newtheorem{proposition}[theorem]{Proposition}
\newtheorem{lemma}[theorem]{Lemma}
\newtheorem{corollary}[theorem]{Corollary}
\theoremstyle{definition}
\newtheorem{definition}[theorem]{Definition}
\newtheorem{bigremark}[theorem]{Remark}
\newtheorem{example}[theorem]{Example}
\newcommand{\ga}[1]{m_{#1}}
\newcommand{\abSc}{ab\#c}
\newcommand{\bcSa}{bc\#a}
\newcounter{bean}
\newcommand{\incl}[3]{\ensuremath{#1\stackrel{#2}
 {\lhook\joinrel\longrightarrow}#3}}
\newcommand{\seqm}[3]{\ensuremath{#1\stackrel{#2}
 {\longrightarrow}#3}}
\newcommand{\seqmm}[5]{\ensuremath{#1\stackrel{#2}
 {\longrightarrow}#3\stackrel{#4}{\longrightarrow}#5}}
\newcommand{\seqmmm}[7]{\ensuremath{#1\stackrel{#2}
 {\longrightarrow}#3\stackrel{#4}{\longrightarrow}#5
  \stackrel{#6}{\longrightarrow}#7}}
\newcommand{\mapaddr}[2]{\ensuremath{\stackrel{#1}{\longrightarrow}#2}}
\newcommand{\incladdr}[2]{\ensuremath{\stackrel{#1}{\lhook\joinrel\longrightarrow}#2}}
\newcommand{\ceil}[1]{\ensuremath{\left\lceil #1 \right\rceil}}
\newcommand{\paren}[1]{\ensuremath{\left( #1 \right)}}
\newcommand{\br}[1]{\ensuremath{\left\{ #1 \right\}}}
\newcommand{\vbr}[1]{\ensuremath{\left\langle#1\right\rangle}}
\newcommand{\cvee}[3]{\displaystyle\bigvee^{#2}_{#1}#3}
\newcommand{\cplus}[3]{\displaystyle\bigoplus^{#2}_{#1}#3}
\newcommand{\cdunion}[3]{\displaystyle\coprod^{#2}_{#1}#3}
\newcommand{\csqdunion}[3]{\displaystyle\bigsqcup^{#2}_{#1}#3}
\newcommand{\csum}[3]{\displaystyle\sum^{#2}_{#1}#3}
\newcommand{\cunionmulti}[4]{\displaystyle\bigcup^{#3}
_{\renewcommand{\arraystretch}{0.6}\begin{matrix}\scriptstyle #1 \cr \scriptstyle #2\end{matrix}\renewcommand{\arraystretch}{1.2}}#4}
\newcommand{\cdunionmulti}[4]{\displaystyle\coprod^{#3}
_{\renewcommand{\arraystretch}{0.6}\begin{matrix}\scriptstyle #1 \cr \scriptstyle #2\end{matrix}\renewcommand{\arraystretch}{1.2}}#4}
\newcommand{\cset}[2]{\br{#1\,\,\middle\vert\,\,#2}}
\newcommand{\indexsize}[1]{\renewcommand{\arraystretch}{0.6}#1\renewcommand{\arraystretch}{1.2}}
\newcommand{\qqed}{\hfill\square}
\newcommand{\sm}{\backslash}
\newcommand{\mc}[1]{\ensuremath{\mathcal{#1}}}
\newcommand{\mb}[1]{\ensuremath{\mathbb{#1}}}
\newcommand{\Z}{\ensuremath{\mathbb{Z}}}
\newcommand{\ID}{\ensuremath{\mathbbm{1}}}
\newcommand{\bd}{\ensuremath{\partial}}
\newcommand{\wcolon}{\ensuremath{\,\colon\,}}
\DeclareMathOperator{\Nill}{nill}
\DeclareMathOperator{\cat}{cat}
\DeclareMathOperator{\Cat}{\mc Cat}
\DeclareMathOperator{\gcat}{gcat}
\DeclareMathOperator{\cuplen}{cup}
\DeclareMathOperator{\link}{link}
\DeclareMathOperator{\filt}{filt}
\DeclareMathOperator{\hd}{hd}
\DeclareMathOperator{\hc}{hc}
\def\Tor{\mathrm{Tor}}
\newcommand{\sk}[2]{{#1}^{\paren{#2}}}
\newcommand{\z}[1]{\mathcal Z_{#1}}
\newcommand{\zsk}[2]{\mathcal Z_{{#1}^{\paren{#2}}}}
\newcommand{\zk}{\mathcal Z_K}
\newcommand{\zl}{\mathcal Z_L}
\newcommand{\hz}[1]{\widehat{\mathcal Z}_{#1}}
\newcommand{\hzk}{\widehat{\mathcal Z}_K}
\newcommand{\rzsk}[2]{\mathbb R\mathcal Z_{{#1}^{\paren{#2}}}}
\newcommand{\rzk}{\mathbb R\mathcal Z_K}
\begin{document}
\title[$LS$-Category of Moment-Angle Manifolds]{$LS$-category of moment-angle manifolds and higher order Massey products}

\author{Piotr Beben}
\address{\scriptsize{School of Mathematics, University of Southampton,Southampton SO17 1BJ, United Kingdom}} 
\email{P.D.Beben@soton.ac.uk} 
\author{Jelena Grbi\'c} 
\address{\scriptsize{School of Mathematics, University of Southampton,Southampton SO17 1BJ, United Kingdom}}  
\email{J.Grbic@soton.ac.uk} 

\subjclass[2010]{Primary  55M30, 55U10, Secondary 32Q55, 05E40}
\keywords{polyhedral product, moment-angle complex, toric topology, Massey products, Lusternik-Schnirelmann category, non-K\" ahler manifolds} 

\begin{abstract}
Using the combinatorics of the underlying simplicial complex $K$, we give various upper and lower bounds for the Lusternik-Schnirelmann (LS) category of moment-angle complexes $\zk$. We describe families of simplicial complexes and combinatorial operations which allow for a systematic description of the LS category. In particular, we characterise the LS category of moment-angle complexes $\zk$ over triangulated $d$-manifolds $K$ for $d\leq 2$, 
as well as higher dimension spheres built up via connected sum, join, and vertex doubling operations. 
We show that the LS category closely relates to vanishing of Massey products in $H^*(\zk)$ and through this connection we describe first structural properties of Massey products in moment-angel manifolds.
Some of further applications include calculations of the LS category and the description of conditions for vanishing of Massey products for moment-angle manifolds over fullerenes, Pogorelov polytopes and $k$-neighbourly complexes, which double as important examples of hyperbolic manifolds. 
\end{abstract}
\maketitle

\section{Introduction}

A covering of a topological space $X$ is said to be \emph{categorical} if every set in the covering is open and contractible in $X$, that is, the inclusion map of each set into $X$ is nullhomotopic.
The \emph{Lusternik-Schnirelmann category} (or simply \emph{category}) $\cat(X)$ of $X$ is the smallest integer $k$
such that $X$ admits a categorical covering by $k+1$ open sets $\{U_0,\ldots,U_k\}$.

In general it is not easy to compute these invariants. Lusternik and Schnirelmann~\cite{LS1, LS2} introduced it initially in connection to variational problems. Poincar\' e studying dynamical systems suggested that the existence and form of solutions of differential equations, that is, the complexity of flows should be related to the topological complexity of the underlying manifold. In this context, a first step was to estimate the number of invariant points for the particular case of gradient flows, equivalently, to estimate the minimal number of critical points of functions on the manifold.  The Lusternik-Schnirelmann invariant, nowadays known as the LS category, gives a lower bound on the number of critical points for any smooth function on the manifold. While this was analytical in nature, there has been a host of useful applications in geometry and algebraic topology. For example, G. Whitehead~\cite[page 464]{MR516508} showed that for a group-like space $G$, $\cat(X)\geq \mathrm{nilp}([X, G])$. 

In this paper, motivated by the study of the Lusternik-Schnirelmann category and related invariants for polyhedral products of the form $(X,\ast)^K$ 
for certain nice spaces $X$ in~\cite{MR2457428,MR3642766}, 
we are focusing on the LS category of moment-angle complexes $\zk$ which are a particular example of polyhedral product $(X,A)^K$ for $X=D^2$ and $A=S^1$. 

A large portion of the attention that moment-angle complexes have received has been due to their relevance to algebraic and complex geometry, combinatorics and algebra, in particular homology of local rings. The relation between cohomology of moment-angle complexes and homology of local rings can be seen through the notion of Golod rings. To a simplicial complex $K$ on $n$ vertices, the Stanley-Reisner algebra $R[K]= R[v_1,\ldots, v_n]/I_K$, where $R$ is a commutative ring with unit, can be associated and there is a ring isomorphism $H^*(\zk; R)\cong \mathrm{Tor}_{R[v_1,\ldots, v_n]}(R[K], R)$.  In particular, the case of $\zk$ being a co-$H$-space, that is, $\cat(\zk)=1$,  is closely related to the Stanley-Reisner ring $R[K]$ being a Golod ring. 

Let $R$ be a commutative Noetherian local ring with maximal ideal $\mathfrak m$ and residue field $k = R/\mathfrak m$. Let M be a finitely generated module over $R$. The Poincar\' e series $P^R_M(t)$ of $M$ over $R$ is a formal power series in $\Z[|t|]$ defined as
\[
P^R_M(t)=\sum_{i\geq 0} \beta^R_i(M)t^i\in\Z[|t|]
\]
where $\beta^R_i (M) = \dim_k \Tor_R^i (M, k)$ denotes the $i$-th Betti number of $M$. In the 1950s Serre and Kaplansky asked whether the Poincar\' e series $P^R_k (t)$ is a rational function. This question ties in closely with an analogous question in algebraic topology attributed to Serre on the rationality of the
Poincar\' e series of loop spaces of finite simply connected CW complexes (see~\cite{MR563510}). Several authors
made an attempt to find an affirmative answer. However in 1982 Anick found a counterexample
answering the question in negative~\cite{MR644015}. Research on this topic intensified since the appearance of
Anick's example.
The question of the Stanley-Reisner ring $R[K]$ being Golod translates to cup products and all higher Massey products vanishing in $H^*(\zk)$. In fact, there is a fairly large literature that is focused on determining those $K$ for which the Stanley-Reisner ring $R[K]$ is Golod (see~\cite{MR3658721, MR3633129, MR3461047,MR3084441,MR2321037,2013arXiv1306.6221I,BerglundRational}).

Away from algebra, moment-angle complexes have found intimate connections and applications to complex and symplectic geometry. 
For example, Bosio and Meersseman~\cite{MR2285318} studied a class of complete intersections of real quadrics
in $\mathbb C^n$ called links and showed that all links (after taking products with circles in odd dimensional cases) can be given a complex-analytic structure. Some of them are non-K\" ahler complex manifolds which generalise the class of Hopf and Calabi-Ekmann manifolds. Crucially, Bosio and Meersseman established a connection
between complex geometry and toric topology by showing that links coincide with moment-angle manifolds
$\mathcal Z_P$ over a simple polytope $P$. 
Deligne, Griffiths, Morgan and Sullivan~\cite{MR0382702} proved that  compact K\"ahler manifolds are formal.  Therefore they have trivial Massey products with field coefficients. 
In this way, moment-angle manifolds admitting non-trivial Massey products 
describe some of the few known explicit families of complex manifolds beyond K\"ahler manifolds. This is one of the main motivations to study Massey products and more generally 
topology of moment-angle manifolds.

The above two questions; Golodness of Stanley-Reisner rings and Massey products in complex manifolds, provide a motivation to study the LS category and its relations to Massey products of  the large class of moment-angle complexes known as  \emph{moment-angle manifolds}. Moment-angle complexes $\zk$ take the form of topological manifolds when $K$ is a simplicial sphere or equivalently when moment-angle complexes are considered over simple polytopes.
Their topology and cohomology is very intricate, with many questions remaining open even for low dimensional $K$ 
(see for example~\cite{MR3363157,MR2330154,MR3073929,MR1104531,MR531977,MR2285318,MR3593998,MR3507477}).

In Section 3 we describe filtrations of simplicial complexes $K$ which imply lower bounds on the LS-category of $\zk$. We continue by detecting combinatorial operations on simplicial complexes which in contrast to filtrations imply upper bounds on $\cat(\zk)$. 
As specific applications of these methods in Section 4, we characterise  
triangulations $K$ of $d$-spheres where $d\leq 2$ for which $\zk$ has a given category, and 
we build up higher dimensional spheres $K$ for which we can calculate $LS(\zk)$.

In Proposition~\ref{catsurface} we further extend those results by calculating $\cat(\zk)$ for any triangulation of a closed oriented surface $K$. Up to now only $\cat(\zk))=1$ has been considered.

Our further motivation is going back to the homology of rings and the study of a combinatorial and algebraic characterisation of Golod complexes $K$ and $co$-$H$-space 
(category $\leq 1$) moment-angle complexes $\zk$ given in~\cite{MR3461047} in the case of flag complexes $K$.
The authors there showed that both of these concepts are equivalent, and moreover, that they both coincide with chordality 
of the $1$-skeleton of $K$ and the triviality of the multiplication on $H^*(\zk; R)\cong \Tor^+_{R[v_1,\ldots,v_n]}(R[K],R)$
for $R=\mb Z$ or $R$ any field. 
An interesting consequence of this from the perspective of commutative algebra was that for $K$ flag the trivial multiplication on 
$\Tor^+_{R[v_1,\ldots,v_n]}(R[K],R)$ implies that all higher Massey products are also trivial.
This depended on the general fact that the cohomology ring of a space of category less than equal to 1 has trivial multiplication and Massey products 
vanish~\cite{MR0231375,MR0356037}.
It is natural to ask what the corresponding statement is for spaces with larger category, 
more so, if the characterisation for Golod flag complexes in~\cite{MR3461047} can be generalised in terms of Massey products. 
An answer to the first question was given by Rudyak in~\cite{MR1644063}, which inspired us to give the following definition.

\begin{definition}
A simplicial complex $K$ on vertex set $[n]$ is $m$-\emph{annihilating} over $R$ if
\begin{itemize}
\item[(1)] $\Nill(\mathrm{Tor}_{R[v_1,\ldots,v_n]}(R[K],R))\leq m+1$;
\item[(2)] Massey products $\vbr{v_1,\ldots,v_k}$ vanish in $\mathrm{Tor}^+_{R[v_1,\ldots,v_n]}(R[K],R)$ 
whenever $v_i=a_1\cdots a_{m_i}$ and $v_j=b_1\cdots b_{m_j}$, and $m_i+m_j>m$ for some odd $i$ and even $j$ and
$a_s,b_t\in\mathrm{Tor}^+_{R[v_1,\ldots,v_n]}(R[K],R)$.
\end{itemize}
\end{definition}

\begin{proposition}
\label{PRudyak1}
If $\cat(\zk)\leq m$, then $K$ is $m$-annihilating.~$\qqed$ 
\end{proposition}

Here the \emph{nilpotency} $\Nill A$ of a graded algebra $A$ is the smallest integer $k$ such that all length $k$ products in the 
positive degree part $A^+$ vanish. 
Notice that $K$ is $(m+1)$-annihilating whenever it is $m$-annihilating, and $1$-annihilating of $K$ coincides with $K$ being
Golod~\cite{MR0138667}, namely, that all products and (higher) Massey products are trivial in $\mathrm{Tor}^+_{R[v_1,\ldots,v_n]}(R[K],R)$. 
All of this can be restated equivalently in terms of the cohomology of $\zk$ due to an isomorphism of graded commutative algebras
$H^*(\zk;R)\cong\mathrm{Tor}_{R[v_1,\ldots,v_n]}(R[K],R)$ when $R$ is a field or $\Z$~\cite{MR2117435}.
We shall consider (co)homology with integer coefficients. 

We note that the converse of Proposition~\ref{PRudyak1} might not be true in general. For example, 
Iriye and Yano in~\cite{MR3649878} constructed an example of a Golod complex $K$ such that $\zk$ is not a co-$H$-space.
Inequality $(1)$ can also be strict (using a construction of Katth\" an~\cite{MR3627285} in Example~\ref{EKatthan}).
However, we show the converse does hold (along with extension to larger LS-categories) for $1$-spheres and $2$-spheres, $2$-dimensional closed oriented surfaces, as well as a natural class of higher dimensional spheres, and that they can be characterised combinatorially.

\begin{proposition} 
\label{catsurface}
Let $K$ be a triangulation of a closed oriented connected surface. Then 
$$
1\leq\cuplen(\zk)=\cat(\zk)\leq 3.
$$
Moreover, letting $m:=\cat(\zk)$, we have:\\
$(i)$ $m=1$ iff $K$ has no chordless cycles (equivalently $K=\bd\Delta^3$)\\
$(ii)$ $m=2$ iff $K$ has a chordless cycle, but none with more than $3$ vertices\\
$(iii)$ $m=3$ iff $K$ has a chordless cycle with at least $4$ vertices.~$\qqed$
\end{proposition}

\begin{theorem}
\label{TM1}
If $K$ is any triangulated $d$-sphere for $d\leq 2$, 
or built up as a connected sum of joins of one or more of such spheres 
(as long as spheres in the joins are not simplex boundaries, and connected sums are over disjoint faces),
then the following are equivalent: \\
(a) $\cat(\zk)\leq k$;\\ (b) $K$ is $k$-annihilating; \\
(c) length $k+1$ cup products of positive degree elements in $H^*(\zk)$ vanish;\\
(d) there does not exist a \emph{spherical filtration} of full subcomplexes of $K$ of length more than $k$.
Moreover, $k\leq d+1$.~$\qqed$
\end{theorem}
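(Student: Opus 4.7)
The plan is to prove the cycle (a)$\Rightarrow$(b)$\Rightarrow$(c)$\Rightarrow$(d)$\Rightarrow$(a) together with the uniform bound $k\leq d+1$. The implication (a)$\Rightarrow$(b) is precisely the Proposition stated above. For (b)$\Rightarrow$(c) I use the quoted isomorphism of graded commutative algebras $H^*(\zk)\cong\Tor_{\Z[v_1,\ldots,v_n]}(\Z[K],\Z)$: condition~(1) in the definition of $k$-Golodness is precisely the vanishing of all length-$(k+1)$ products in $\Tor^+$, which under this isomorphism translates to the vanishing of all length-$(k+1)$ cup products of positive degree classes in $H^*(\zk)$.

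For (c)$\Rightarrow$(d) I would argue the contrapositive via Hochster's formula
\[
H^*(\zk)\;\cong\;\cplus{J\subseteq[n]}{}{\rcohlgy{*-|J|-1}{K_J}},
\]
where $K_J$ denotes the full subcomplex on $J$, together with its description of cup products: classes supported on disjoint vertex subsets $J_1,J_2$ multiply into the summand indexed by $J_1\sqcup J_2$ via the canonical map $K_{J_1\sqcup J_2}\to K_{J_1}\ast K_{J_2}$. A spherical filtration of length $k+1$ picks out $k+1$ spherical full subcomplexes on pairwise disjoint vertex sets whose assembly realises a top-dimensional class of $H^*(\zk)$ as a nontrivial iterated cup product. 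The hypothesis that $K$ is a triangulated $d$-sphere (or built from such by the permitted operations) ensures, via Alexander duality inside $|K|$, that the full subcomplexes contributing to any nonzero long product are themselves spheres, and that they can be nested so as to yield the required filtration.

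The main technical obstacle is the implication (d)$\Rightarrow$(a), because upper bounds on $\cat$ require an explicit categorical cover. Assuming no spherical filtration of length exceeding $k$ exists, I would construct a categorical cover of $\zk$ by $k+1$ open sets obtained by pulling back an open cover of $|K|$ adapted to the simplicial structure: each open piece lies over a full subcomplex which, lacking a long spherical filtration, retracts inside $\zk$ onto the moment-angle complex of a disk-type subcomplex, hence is contractible. For $d\leq 2$ this can be made fully explicit because full subcomplexes of triangulated $2$-spheres are surfaces with boundary and their moment-angle complexes are well understood. The extension to joins, connected sums, and vertex doublings is handled by combining the product formula $\mathcal{Z}_{K_1\ast K_2}\cong\mathcal{Z}_{K_1}\times\mathcal{Z}_{K_2}$ with the standard inequality $\cat(X\times Y)\leq\cat(X)+\cat(Y)$, plus analogous decomposition lemmas for connected sum and vertex doubling, each of which alters the maximal length of a spherical filtration in a controlled way compatible with the corresponding change in category.

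Finally, the bound $k\leq d+1$ is automatic from (d): any strictly ascending chain of spherical full subcomplexes of a triangulated $d$-sphere has length at most $d+1$, since the sphere dimensions appearing must be distinct integers in $\{-1,0,\ldots,d\}$. Under join, connected sum, and vertex doubling this bound propagates additively through the induction. The hardest single step will be the explicit construction of a $(k+1)$-fold categorical cover in (d)$\Rightarrow$(a), since here one cannot appeal to the usual dimension/connectivity estimates for $\cat$ and must instead exploit the moment-angle-specific geometry controlled by the spherical filtration.
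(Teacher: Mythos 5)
Your outline of the equivalences $(a)\Rightarrow(b)\Rightarrow(c)$ matches the paper exactly: $(a)\Rightarrow(b)$ is Proposition~\ref{PRudyak}, and $(b)\Rightarrow(c)$ is immediate from condition~(1) of the definition of $m$-Golod together with the isomorphism $H^*(\zk)\cong\Tor_{\Z[v_1,\ldots,v_n]}(\Z[K],\Z)$. The bound $k\leq d+1$ you deduce from $(d)$ is also fine, though the paper derives it more simply from the dimension bound $\cat(\zk)\leq\dim K+1$ (Lemma~\ref{LSkeleta2}).

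For $(c)\Rightarrow(d)$, however, the argument is not quite right. You describe a spherical filtration of length $k+1$ as picking out ``$k+1$ spherical full subcomplexes on pairwise disjoint vertex sets,'' but a spherical filtration is a \emph{nested} chain $K_{I_{\ell}}\subsetneq\cdots\subsetneq K_{I_1}=K$. The disjoint subsets that actually arise are the differences $J_{i+1}=I_i-I_{i+1}$, and what one needs is that each $|K_{J_{i+1}}|\simeq S^0$ so that $K_{I_i}\hookrightarrow K_{I_{i+1}}\ast K_{J_{i+1}}$ is a homotopy equivalence. The paper secures this via the ``well-behaved'' condition, Definition~\ref{DWell}(1), which it verifies for $d\leq 2$ and propagates through joins and connected sums (Lemmas~\ref{Ld012Well}, \ref{LCSumWell}, \ref{LJoinWell}). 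You invoke Alexander duality in $|K|$ instead, but this is neither what the paper does nor is it clear it delivers the precise combinatorial fact needed, namely that the vertex-difference full subcomplex is homotopy equivalent to $S^0$.

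The genuine gap is $(d)\Rightarrow(a)$, which you yourself flag as the hard step. Your proposal --- pull back an open cover of $|K|$ adapted to the simplicial structure, and argue each piece ``lies over a full subcomplex which, lacking a long spherical filtration, retracts inside $\zk$ onto the moment-angle complex of a disk-type subcomplex'' --- does not work as stated: $\zk$ is not fibered over $|K|$ in a way that makes such a pullback meaningful, and no argument is given for why the resulting pieces would be open, cover $\zk$, or be contractible in $\zk$. The paper's actual route is very different. For $d=2$ it separates into three cases controlled by the chordality of the $1$-skeleton: if there is no chordless cycle, $K=\bd\Delta^3$ and $\zk\simeq S^7$; if all chordless cycles are triangles, the $1$-skeleton is chordal, hence an $HMF$-complex, so $\cat(\zsk{K}{1})\leq 1$ (Proposition~\ref{PExtractible}) and $\cat(\zk)\leq\cat(\zsk{K}{1})+1\leq 2$ by the skeletal filtration bound (Lemma~\ref{LSkeleta2}); if there is a long chordless cycle, $\filt(K)\geq 3=d+1$ bounds $\cat$ trivially. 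For joins and connected sums, the key estimates are Corollary~\ref{CJoinSkeleton} (bounding the category of the codimension-one skeleton of a join, not just of the join itself) and Corollary~\ref{CConnectedSum}, combined with the filtration inequalities $\filt(K\ast L)\geq\filt(K)+\filt(L)$ and $\filt(K\#_\sigma L)\geq\max\{2,\filt(K),\filt(L)\}$. The extra control over $\zsk{K}{d-1}$ --- which your product inequality $\cat(X\times Y)\leq\cat(X)+\cat(Y)$ alone would not give --- is precisely what makes the connected-sum step close. You would need to replace the vague ``retraction onto a disk-type subcomplex'' with these concrete skeletal and gluing bounds.
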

Dually, these operations on spheres can be stated in terms of simple polyotpes, that is, the join of simplicial complexes corresponds to the product of simple polytopes, while the connected sum of simplicial complexes corresponds to taking the vertex cut of corresponding polytopes and then gluing them along the new hyperplane which is subsequently removed. 

As applications of the Theorem~\ref{TM1}, we obtain the LS category of moment-angle manifolds over important classes of simple 3-polytopes which contain fullerenes, L\"obel, Pogorelov polytopes. The moment-angle manifolds over these 3-polytopes have been of interest in various research areas, in particular there is a direct relation to hyperbolic geometry~\cite{MR3635437}.

The question of determining higher Massey products in $H^*(\zk)$ is an important one but notoriously difficult 
and equally interesting for algebraist, topologists and geometers. 
Currently, a systematic answer is known in the case of moment-angle complexes associated to one dimensional simplicial complexes, 
and only for triple Massey products of three dimensional cohomological classes (see~\cite{MR2330154, MR4181064}).
Using the Lusternik-Schnirelmann category of moment-angle complexes, 
we give the first structural results on higher Massey products in moment-angle complexes by considering $n$-Massey products of decomposable classes of arbitrary dimensions.
For instance, $k$ is precisely $3$ when $K$ is the boundary of the dual of a \emph{fullerene} $P$. 
\begin{theorem}
For fullerenes $P$, $\cat(\z{\bd P^\ast})=3$ and $\bd P^\ast$ is $3$-annihilating.
In particular, all Massey products of decomposable elements in $H^+(\z{\bd P^\ast})$ must vanish.~$\qqed$
\end{theorem}

The last theorem can be generalised to any Pogorelov polytope.
\begin{theorem}
For Pogorelov polytopes $P$, $\cat(\z{\bd P^*})=3$ and $\bd P^*$ is $3$-annihilating implying that all Massey products of decomposable elements 
in $H^+(\z{\bd P^*})$ vanish.
\end{theorem}
Applying \emph{vertex doubling operations}, the range of spheres in Theorem~\ref{TM1} can be extended.

\begin{theorem}
\label{TM2}
If $K(J)$ is the simplicial wedge of $K$ for some integer sequence $J=(j_1,\ldots,j_n)$, then $\cat(\z{K(J)})\leq \cat(\zk)$.~$\qqed$
\end{theorem}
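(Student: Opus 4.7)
The plan is to start from the Bahri--Bendersky--Cohen--Gitler homeomorphism
\[
\z{K(J)}\;\cong\;(\underline{D^{2J}},\underline{S^{2J-1}})^K,
\]
which realizes $\z{K(J)}$ as a polyhedral product over the original complex $K$ with each pair $(D^2,S^1)$ replaced by the higher-dimensional disk pair $(D^{2j_i},S^{2j_i-1})$. By iterating on $\sum_i(j_i-1)$, it then suffices to treat a single vertex doubling, so I would assume $J=(2,1,\ldots,1)$ after reordering, and the general case follows by induction.

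Given a categorical open cover $\{U_0,\ldots,U_m\}$ of $\zk$ with $m=\cat(\zk)$, the aim is to produce open sets $V_\alpha\subset\z{K(J)}$ covering $\z{K(J)}$, each contractible in $\z{K(J)}$. The idea is to build each $V_\alpha$ as a polyhedral-product neighborhood of $\iota(U_\alpha)$, where $\iota\colon\zk\hookrightarrow\z{K(J)}$ is induced by the pair inclusion $(D^2,S^1)\hookrightarrow(D^4,S^3)$ as the first coordinate plane. A contraction of $V_\alpha$ in $\z{K(J)}$ would proceed in two stages: first, collapse the extra disk coordinate introduced by doubling, using the product decomposition $D^4=D^2\times D^2$ in each polyhedral piece where the doubled vertex lies in the simplex; second, apply the original contraction of $U_\alpha$ in $\zk$ conjugated by $\iota$.

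The main obstacle is that no pair retraction $(D^4,S^3)\to(D^2,S^1)$ exists: any such map would restrict to a retraction $S^3\to S^1$, which is ruled out on fundamental-group grounds. Hence there is no globally defined pair projection $\z{K(J)}\to\zk$ along which to pull back the cover, and the first-stage collapse above does not extend to pieces of $\z{K(J)}$ in which the doubled coordinate lies in $S^3$ rather than $D^4$. The technical heart of the proof will be arranging the $V_\alpha$'s so that these $S^3$-strata are covered: every $S^3$-stratum of $\z{K(J)}$ is adjacent under the polyhedral gluing to a $D^4$-stratum in which the collapse is well defined, and points in the former can be pushed into the latter before being contracted. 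Verifying that these local contractions assemble into globally continuous homotopies of the $V_\alpha$'s, and that the resulting $V_\alpha$'s genuinely cover $\z{K(J)}$, is the core difficulty and will require a careful stratification-by-simplex argument following the combinatorics of $K(J)$.
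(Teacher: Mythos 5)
Your starting point is correct and matches the paper: the Bahri--Bendersky--Cohen--Gitler identification of $\z{K(J)}$ with a polyhedral product over $K$ built from higher-dimensional disk pairs. After that, however, the proposal goes astray, and the stated ``main obstacle'' is a red herring that the paper never encounters.

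The missing idea is that the pair $(D^{2j+2},S^{2j+1})$ should be read as the coordinate-wise \emph{reduced} suspension $(\Sigma^{2j}D^2,\Sigma^{2j}S^1)$, after which Proposition~\ref{PCoordSusp} applies verbatim and finishes the proof in one line. You are right that there is no pair retraction $(D^4,S^3)\to(D^2,S^1)$, but no retraction or projection is needed. In Proposition~\ref{PCoordSusp}, given a categorical cover $\{U_0,\ldots,U_k\}$ of $\mc S^K=(D^2,S^1)^K$, one simply thickens each $U_\alpha$ to the subset $U_\alpha^1$ of $\mc T^K$ consisting of points whose suspension-coordinate representatives $(t_i,x_i)$ have $(x_1,\ldots,x_n)\in U_\alpha$. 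The contraction of $U_\alpha^1$ then contracts the $x_i$-coordinates along the given nullhomotopy of $U_\alpha$ (further moved to the global basepoint, using path-connectedness of $\mc S^K$), and the point is that the reduced-suspension identification $(t_i,\ast)\sim\ast$ makes this a well-defined, continuous contraction of $U_\alpha^1$ \emph{inside} $\mc T^K$ no matter what the $t_i$-coordinates are doing. Nothing is projected onto $\zk$, nothing is pushed out of the ``$S^3$-strata,'' and the collapse you worry about in the pieces where the doubled coordinate lies in $S^{2j_i+1}$ is handled automatically, because $S^{2j_i+1}$ is also a reduced suspension of $S^1$ with the same collapsed basepoint cone.

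By contrast, your plan explicitly defers its core step (``arranging the $V_\alpha$'s so that these $S^3$-strata are covered,'' ``verifying that these local contractions assemble into globally continuous homotopies'') to an unspecified stratification argument, so as written the proposal is incomplete. The two-stage collapse you describe using $D^4=D^2\times D^2$ also does not line up with the actual geometry: the relevant structure is the smash/suspension $D^{2j+2}\cong S^{2j}\wedge D^2$, not a Cartesian product splitting, and the product picture is exactly what leads you into the retraction obstruction. Replacing the product decomposition by the reduced-suspension decomposition removes the obstacle entirely and is the key insight your argument is missing. Your reduction to a single doubling by induction is harmless but unnecessary; Proposition~\ref{PCoordSusp} treats all coordinates simultaneously.
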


Our methods also apply to compute tight bounds on the LS-category for other important classes of simplicial complexes, for example, those that are graphs, and those that have the neighbourly property.
 
 \begin{theorem}
If $G$ is a graph, $cat(\z{G})\leq 2$. Thus, $G$ is at least $2$-annihilating, 
and all Massey products of decomposable elements in $\Tor^+_{\Z[v_1,\ldots,v_n]}(\Z[G],\Z))$ vanish.
\end{theorem}

\begin{theorem}
\label{TM0}
If $K$ is $l$-neighbourly, then $\cat(\zk)\leq \frac{1+\dim K}{l}$ and $K$ is $\paren{\frac{1+\dim K}{l}}$-annihilating.~$\qqed$
\end{theorem}


We close the introduction by noting that many of the results in this paper  extend to polyhedral products of the form $(Cone(X),X)^K$ in place of $\zk=(D^2,S^1)^K$.

\section{Preliminary}

Recall the following concepts from~\cite{MR516214,MR1990857,MR0004108,MR0229240}. 
The \emph{geometric category} $\gcat(X)$ of a space $X$ is the smallest integer $k$ such that $X$ admits
a categorical covering $\{U_0,\ldots,U_k\}$ of $X$ with each $U_i$ contractible (in itself),
and the \emph{category} $\cat(f)$ of a map $f\colon\seqm{X}{}{Y}$ is the smallest $k$ such that $X$ admits
an open covering $\{V_0,\ldots,V_k\}$ such that $f$ restricts to a nullhomotopic map on each $V_i$.
It is easy to see that $\cat(X)=\cat(\ID\colon\seqm{X}{}{X})$, 
\begin{equation}
\label{Efcat}
\cat(f\colon\seqm{X}{}{Y})\leq\min\{\cat(X),\cat(Y)\}
\end{equation}
and $\cat(h\circ h')\leq \cat(h')$. For path-connected paracompact spaces,
$$
\cat(f\times g)\leq \cat(f)+\cat(g)
$$ 
which follows from the also well-known fact that $\cat(X\times Y)\leq \cat(X)+\cat(Y)$ together with the preceding inequalities.
Unlike $\cat()$, $\gcat()$ is not a homotopy invariant,
though one can obtain a homotopy invariant from $\gcat()$ by defining the \emph{strong category}
$$
\Cat(X)=\min\cset{\gcat(Y)}{Y\simeq X}.
$$
In fact, the strong category satisfies $\Cat(X)-1\leq \cat(X)\leq\Cat(X)\leq \gcat(X)$.
We shall let $\cuplen(X)=\Nill H^*(X)-1$ denote the length of the longest non-zero cup product of positive degree elements in $H^*(X)$. 
The main use of this is the classical lower bound 
$$
\cuplen(X)\leq \cat(X). 
$$

\subsection{Some general bounds}

We begin by giving upper bounds for the Lusternik-Schnirelmann category of some general spaces.

\begin{lemma}
\label{LOpen}
Let $A$ be a subcomplex of $X$ and $S$ an open subset of $A$. 
Then $S$ is a deformation retract of an open subset $U$ of $X$ such that $U\cap A=S$.
\end{lemma}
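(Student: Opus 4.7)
The plan is to extend $S$ across a suitable neighborhood of $A$ in $X$. Since $A$ is a subcomplex of $X$, the inclusion $A\hookrightarrow X$ is a cofibration, so there exist an open neighborhood $N$ of $A$ in $X$, a retraction $r\colon N\to A$, and a strong deformation retraction $H\colon N\times I\to N$ with $H_0=\mathrm{id}_N$, $H_1=r$, and $H_t|_A=\mathrm{id}_A$ for all $t\in I$. The standard cellular construction of $N$ (for instance, the open star neighborhood in the simplicial setting, or the mapping cylinder neighborhood in the CW setting) additionally produces $H$ with the \emph{fibered} property $r\circ H_t=r$ for every $t$, because on each cell not contained in $A$ the homotopy slides points along straight segments joining them to their images under $r$, and $r$ is constant along such segments.

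I would then set $U:=r^{-1}(S)$. Openness of $S$ in $A$ together with continuity of $r$ make $U$ open in $N$, hence open in $X$; and since $r|_A=\mathrm{id}_A$ and $S\subset A$, we immediately obtain $U\cap A=S$. To see that $S$ is a deformation retract of $U$, restrict $H$ to $U\times I$: for $x\in U$ and $t\in I$, the fibered identity gives $r(H_t(x))=r(x)\in S$, so $H_t(x)\in r^{-1}(S)=U$. The restriction is then a strong deformation retraction of $U$ onto $S$, starting at $\mathrm{id}_U$, ending at $r|_U\colon U\to S$, and fixing $S$ throughout.

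The main obstacle is producing the fibered homotopy $H$ with $r\circ H_t=r$. Without this refinement, the intermediate values $H_t(x)$ could leave $U$ for $x\in U$ and the restriction argument would fail; the cell-by-cell construction of a deformation retract neighborhood of $A$ in a CW complex is precisely what supplies this property, and is the only place where the subcomplex hypothesis plays a role.
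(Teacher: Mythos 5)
Your construction is correct and arrives at the same kind of object as the paper, but by a genuinely different route. The paper builds $U$ directly, cell by cell: for each $j$-cell $e_\alpha^j$ of $X-A$ it adjoins a half-open cone collar $V_{\alpha,\cdot}$ living over the relevant part of the lower skeleton, producing an increasing sequence $R_0=S\subseteq R_1\subseteq\cdots$ whose union is the desired open set, with each stage deformation retracting onto the previous one. You instead invoke the existence of an NDR neighbourhood $N$ of $A$ with retraction $r$ and a \emph{fibered} (or \emph{regular}) deformation $H$ satisfying $r\circ H_t=r$, and then take $U=r^{-1}(S)$. This packages the cell-by-cell work into a single citation and makes the openness of $U$, the identity $U\cap A=S$, and the invariance of $U$ under $H$ all one-liners. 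The trade-off is that the real content of the lemma gets pushed into the assertion that the standard CW collar neighbourhood admits such a fibered $H$; that assertion is true, but it is proved by exactly the same skeletal induction the paper performs, so the two proofs have comparable substance.

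One inaccuracy worth fixing: your stated reason for the fibered property — that ``on each cell not contained in $A$ the homotopy slides points along straight segments joining them to their images under $r$'' — is not what happens. In the standard construction a point $p$ of a $j$-cell is first moved radially to the $(j-1)$-skeleton, then the resulting boundary point is moved by the lower-skeletal retraction, and so on; the trajectory is a concatenation of pieces across skeleta, not a straight segment inside $p$'s cell, and $r(p)$ generally does not lie in the closure of that cell at all. The correct justification is a skeletal induction: within the cone collar of a single cell the radial slide preserves the shadow on the boundary sphere, hence preserves $r$, and on the lower skeleton one appeals to the inductive hypothesis. With that repair the proof is sound.
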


\begin{proof}

Let $\mc I_j$ be an index set for the $j$-cells $e_\alpha^j$ of $X-A$, $\Phi_\alpha\colon\seqm{D^j}{}{X}$ its characteristic map,
and $\phi_\alpha\colon\bd D^j\incladdr{}{D^j}\mapaddr{\Phi_\alpha}{X}$ its attaching map. 
Given a subset $B\subseteq X$, let $V_{\alpha,B}$ be the image of 
$\phi_\alpha^{-1}(B)\times[0,\frac{1}{2})\subseteq D^j\cong (\bd D^j\times [0,1])/(\bd D^j\times\{1\})$ under $\Phi_\alpha$.
Notice that $V_{\alpha,B}$ deformation retracts onto a subspace of $\phi_\alpha(\bd D^j)\cap B$,
and if $B\cap e_\alpha^j=\emptyset$, $B\cup V_{\alpha,B}$ deformation retracts onto $B$.

Construct $R_{i+1}\subseteq X$ such that $R_i\subseteq R_{i+1}$, $R_i$ is a deformation retract of $R_{i+1}$, 
and $R_i\cap X^{\vbr{i}}$ is open in the $i$-skeleton $X^{\vbr{i}}$, 
by letting $R_0=S$ and $R_{i+1}=R_i\cup \bigcup_{\alpha\in\mc I_{i+1}} V_{\alpha,S}$.
Then $U=\bigcup_{i\geq 0} R_i$ is open in $X$, deformation retracts onto $S$, and $U\cap A=S$. 

\end{proof}

\begin{lemma}
\label{LFiltration}

Given a filtration $X_0\subseteq\cdots\subseteq X_m=X$ of subcomplexes of a $CW$-complex $X$,
suppose $X_{i+1}-X_i$ is contractible in $X$ for each $i$. Then $\cat(X)\leq \cat(\incl{X_0}{}{X})+m\leq \cat(X_0)+m$.

\end{lemma}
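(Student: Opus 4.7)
The plan is to build a categorical open cover of $X$ by taking a categorical cover of $X_0$ (witnessing the category of the inclusion $X_0 \hookrightarrow X$) and adjoining one open set per filtration step, each of which is nullhomotopic in $X$. The engine that makes everything open and keeps track of nullhomotopies is Lemma~\ref{LOpen}.

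First I would handle the filtration steps. For each $i=0,\ldots,m-1$, observe that $X_i$ is closed in the subcomplex $X_{i+1}$, so $X_{i+1}-X_i$ is open in $X_{i+1}$. Applying Lemma~\ref{LOpen} with $A=X_{i+1}$ and $S=X_{i+1}-X_i$ yields an open subset $U_i\subseteq X$ that deformation retracts onto $X_{i+1}-X_i$ and satisfies $U_i\cap X_{i+1}=X_{i+1}-X_i$. Since by hypothesis $X_{i+1}-X_i$ is contractible in $X$, and $U_i$ is homotopy equivalent to it via a deformation retract compatible with the inclusion into $X$, the inclusion $U_i\hookrightarrow X$ is nullhomotopic. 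Moreover, the $U_i$'s together cover $X-X_0$, because any $x\in X-X_0$ lies in some $X_{i+1}-X_i\subseteq U_i$.

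Next I would cover $X_0$. Let $k=\cat(\incl{X_0}{}{X})$ and pick an open cover $\{V_0,\ldots,V_k\}$ of $X_0$ such that each inclusion $V_j\hookrightarrow X$ is nullhomotopic. Applying Lemma~\ref{LOpen} a second time, with $A=X_0$ and $S=V_j$, produces an open subset $W_j\subseteq X$ deformation retracting onto $V_j$, hence nullhomotopic in $X$ (the nullhomotopy factors through the retraction). The collection $\{W_0,\ldots,W_k\}$ is open in $X$ and covers $X_0$.

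Finally, $\{W_0,\ldots,W_k,U_0,\ldots,U_{m-1}\}$ is a categorical cover of $X$ with $k+m+1$ open sets, so $\cat(X)\leq k+m=\cat(\incl{X_0}{}{X})+m$. The second inequality $\cat(\incl{X_0}{}{X})\leq\cat(X_0)$ is standard (the identity map on $X_0$ gives an upper bound). The only real subtlety is ensuring that the thickenings remain open in the ambient $X$ and not just in the subcomplexes $X_{i+1}$ and $X_0$; this is precisely what Lemma~\ref{LOpen} was set up to guarantee, so the argument is essentially a bookkeeping exercise once that lemma is in hand.
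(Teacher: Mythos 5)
Your argument is correct and is essentially the same as the paper's: both thicken the categorical cover of $X_0$ and the differences $X_{i+1}-X_i$ into open subsets of $X$ via Lemma~\ref{LOpen} and observe that the resulting $k+1+m$ open sets form a categorical cover of $X$. The only difference is notational (your $V_j,W_j,U_i$ correspond to the paper's $U_i,\bar U_i,\bar V_i$).
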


\begin{proof}

Let $k:=\cat(\incl{X_0}{}{X})$, and $\{U_0,\ldots, U_k\}$ be a categorical cover of the inclusion \incl{X_0}{}{X}.
Note that $V_i=X_{i+1}-X_i$ is open in $X_{i+1}$ since the subcomplex $X_i$ is closed in $X_{i+1}$. 
Then iterating Lemma~\ref{LOpen}, we have open subsets $\bar U_i$ and $\bar V_i$ that deformation retract onto
each $U_i$ and $V_i$, respectively. Since the $U_i$'s and $V_i$'s cover $X$ and are contractible in $X$,
so do the $\bar U_i$'s and $\bar V_i$'s, thus they form a categorical cover of $X$.

\end{proof}

For any spaces $X$ and $Y$, and a fixed basepoint $\ast\in X$, 
we let $X\rtimes Y:=(X\times Y)/(\ast\times Y)$ denote the \emph{right half-smash} of $X$ and $Y$,
and $Y\ltimes X=(Y\times X)/(Y\times\ast)$ the \emph{left half-smash}.

\begin{lemma}
\label{LHalfSmash}
If $X$ and $Y$ are $CW$-complexes and $X$ is path-connected, then $\cat(X\rtimes Y)=\cat(X)$. 
\end{lemma}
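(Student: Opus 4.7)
The plan is to prove the two inequalities $\cat(X)\leq\cat(X\rtimes Y)$ and $\cat(X\rtimes Y)\leq\cat(X)$ separately.

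For $\cat(X)\leq\cat(X\rtimes Y)$, I would use a retraction argument: fixing any $y_0\in Y$, the map $\sigma\colon X\to X\rtimes Y$ defined by $x\mapsto[x,y_0]$ is a section of the projection $p_1\colon X\rtimes Y\to X$, $[x,y]\mapsto x$, so $p_1\circ\sigma=\ID_X$. Combining $\cat(h\circ h')\leq\cat(h')$ with $\cat(f)\leq\min\{\cat(X),\cat(Y)\}$ from the preliminaries yields $\cat(X)=\cat(p_1\circ\sigma)\leq\cat(\sigma)\leq\cat(X\rtimes Y)$.

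For the reverse inequality, I would take a categorical open cover $\{U_0,\ldots,U_k\}$ of $X$ with $k=\cat(X)$, and arrange (by removing $\{\ast\}$ from each $U_i$ with $i\geq 1$, which preserves openness and leaves them contractible in $X$) that $\ast$ belongs only to $U_0$. Pulling back along $p_1$ yields the open cover $\{V_i:=p_1^{-1}(U_i)\}$ of $X\rtimes Y$, and it remains to show each $V_i$ is nullhomotopic in $X\rtimes Y$. For $i\geq 1$, $V_i=U_i\times Y$ as a subspace, and a nullhomotopy $H\colon U_i\times I\to X$ contracting $U_i$ to some $x_1\in X$ induces the homotopy $(x,y,t)\mapsto[H(x,t),y]$ in $X\rtimes Y$ from the inclusion of $V_i$ to the map whose image is the slice $\{x_1\}\times Y$. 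Path-connectedness of $X$ gives a path $\gamma$ from $x_1$ to $\ast$, and the further homotopy $(y,t)\mapsto[\gamma(t),y]$ collapses this slice to the basepoint of $X\rtimes Y$; concatenation produces the required nullhomotopy of $V_i$.

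The hard part is the case $i=0$, where $V_0=(U_0\times Y)/(\ast\times Y)\cong U_0\rtimes Y$ and its inclusion into $X\rtimes Y$ is induced by the pointed inclusion $U_0\hookrightarrow X$. The same formula $(x,y,t)\mapsto[H(x,t),y]$ now descends to $V_0\times I\to X\rtimes Y$ only if $H(\ast,t)=\ast$ for all $t$, i.e., only for a \emph{pointed} nullhomotopy of $U_0\hookrightarrow X$. I would arrange this by using that $X$ is CW, so $U_0$ has the homotopy type of a CW complex well-pointed at $\ast$; then the $\pi_1(X)$-action on pointed homotopy classes of maps $U_0\to X$ fixes the constant class (since $[\gamma]\cdot[c_\ast]=[c_\ast]$ for every $[\gamma]\in\pi_1(X)$), so the freely nullhomotopic inclusion is automatically pointed nullhomotopic. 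The resulting pointed nullhomotopy then descends via the quotient $U_0\times Y\to V_0$ to yield the nullhomotopy of $V_0\hookrightarrow X\rtimes Y$, completing the categorical cover of $X\rtimes Y$ by $k+1$ sets.
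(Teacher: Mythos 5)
Your proof is correct and follows essentially the same strategy as the paper's: the lower bound via the retraction $X\hookrightarrow X\rtimes Y\to X$, the upper bound by pushing a categorical cover of $X$ forward to $X\rtimes Y$, and the recognition that the crux is upgrading the free nullhomotopy of the cover set containing $\ast$ to a \emph{pointed} one so that it descends to the quotient over $\ast\times Y$. You differ only in how that upgrade is carried out. The paper's device is to pass to the whisker $\tilde X=X\cup_\ast[0,1]$ with new basepoint the free end $1$: any open $U_i\ni 1$ then contains an interval collar of $1$, so $(U_i,1)$ is manifestly well-pointed and the contraction may be taken rel $1$, with no need to isolate the basepoint in a single cover set. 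You instead stay in $X$, shrink the cover so that $\ast\in U_0$ only, and invoke well-pointedness of $(U_0,\ast)$ together with the triviality of the $\pi_1(X)$-action on the constant class. Both routes work; the whisker buys an explicit, elementary well-pointedness at the cost of the homotopy equivalences $X\simeq\tilde X$ and $X\rtimes Y\simeq\tilde X\rtimes Y$, while your version appeals to slightly heavier general machinery. Two small points worth making explicit in your write-up: well-pointedness of $(U_0,\ast)$ should be justified by $U_0$ being an ANR (an open subset of a CW-complex), not merely by $U_0$ having the homotopy type of a CW-complex, since the latter says nothing about the specific point $\ast$; and the $\pi_1$-action argument should be applied to the path-component of $U_0$ containing $\ast$, with any remaining components of $U_0$ handled exactly as in your $i\geq 1$ case.
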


\begin{proof}

Let  $\tilde X$ be given by attaching the interval $[0,1]$ to $X$ 
by identifying $0\in [0,1]$ with the basepoint $\ast\in X$, and fix $1\in\tilde X$ to be the basepoint.
Given $k=\cat(\tilde X)$ and $\{U_0,\ldots,U_k\}$ a categorical cover of $\tilde X$,
take the open cover $\{U_0\rtimes Y,\ldots, U_k\rtimes Y\}$ of $\tilde X\rtimes Y=(\tilde X\times Y)/(1\times Y)$
(here $U_i\rtimes Y=U_i\times Y$ if $1\notin U_i$).
Notice that the contractions of each $U_i$ in $\tilde X$ can be taken so that $1$ remains fixed if $1\in U_i$.
If $U_i$ contracts to a point $b_i$ in $\tilde X$, $U_i\rtimes Y$ deforms onto $\{b_i\}\rtimes Y$ in $\tilde X\rtimes Y$,
which in turn contracts to the basepoint in $\tilde X\rtimes Y$ by homotoping the coordinate $b_i$ to $1$.
Therefore $\cat(\tilde X\rtimes Y)\leq k$, and we have $\cat(X\rtimes Y)\leq k$ since $X\simeq\tilde X$ 
and $X\rtimes Y\simeq \tilde X\rtimes Y$. Moreover, $\cat(X\rtimes Y)\geq k$ since $X$ is a retract of $X\rtimes Y$.

\end{proof}

Let $\mc S$ be $m$ copies of the interval $[0,1]$ glued together at the endpoints $1$ in some order.
Given a collection of maps \seqm{X}{f_i}{Y_i} for $i=1,\ldots,m$, the \emph{homotopy pushout} $P$ of the maps $f_i$ 
is the $m$-fold mapping cylinder 
$$
P:=\paren{Y_1\sqcup\cdots\sqcup Y_m\sqcup (X\times\mc S)}/\sim
$$ 
under the identification $(x,t)\sim f_i(x)$ whenever $t$ is in the $i^{th}$ copy of $[0,1]$ in $\mc S$ and $t=0$.

\begin{lemma}
\label{LGT}
Fix $m\geq 2$. For $i=1,\ldots, m$, 
let $A_i$ and $C_i$ be basepointed $CW$-complexes, $B_i=\prod_{j\neq i} A_j$, and $E$ be a contractible space.
Suppose \seqm{A_i\times E}{f_i}{C_i} are nullhomotopic maps,
and $P$ is the homotopy pushout of the maps 
\seqm{A_i\times E\times B_i}{f_i\times\ID_{B_i}}{C_i\times B_i} for $i=1,\ldots, m$.
Then 
$$
\cat(P)\leq \max\{1,\cat(C_1),\ldots,\cat(C_m)\}. 
$$
\end{lemma}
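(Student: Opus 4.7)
The plan is to construct a categorical cover of $P$ with $k+1$ open sets, where $k=\max\{1,\cat(C_1),\ldots,\cat(C_m)\}$. For each $i$ I fix a nullhomotopy $F_i\colon A_i\times E\times I\to C_i$ with $F_i(\cdot,\cdot,0)\equiv c_i^\ast$ and $F_i(\cdot,\cdot,1)=f_i$, and a categorical cover $\{U_i^0,\ldots,U_i^{\cat(C_i)}\}$ of $C_i$, padded with empty sets up to size $k+1$. Then $S_s=\bigsqcup_iU_i^s\times B_i$ is an open subset of the subcomplex $\bigsqcup_iC_i\times B_i\subseteq P$, and Lemma~\ref{LOpen} extends it to an open $\bar V_s\subseteq P$ deformation retracting onto $S_s$. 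Letting $V_{\mathrm{cen}}\subseteq P$ be the image of $X\times(\mc S\setminus\{\text{free endpoints}\})$, which deformation retracts onto $X\times\{1\}\cong X$, I take $V_s=\bar V_s$ for $s\geq 1$ and $V_0=\bar V_0\cup V_{\mathrm{cen}}$. These $k+1$ open sets cover $P$.

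The main technical work is showing each $V_s$ is contractible in $P$. I would establish the following key claim by induction on the number of free coordinates: any subspace $T\subseteq X\times\{1\}$ of the form $\{e^0\}\times F_1\times\cdots\times F_m$, where each $F_l$ is either a point of $A_l$ or the whole $A_l$, is null-homotopic in $P$. For the inductive step with some $F_j=A_j$, I deform $T$ down leg $j$ of the cylinder to $X\times\{0\}$ and then through $F_j$, landing on $\{c_j^\ast\}\times\prod_{l\neq j}F_l\subseteq C_j\times B_j$ (the $A_j$-coordinate is absorbed by $c_j^\ast$); then running $F_j$ forward at $(a_j^\ast,e^\ast)$ and pulling back up leg $j$ arrives at a subspace of $X\times\{1\}$ with $A_j$-coordinate pinned to $a_j^\ast$, having one fewer free coordinate, so contractible in $P$ by induction.

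Given the claim, each $U_i^s\times B_i$ is contractible in $P$ (first contract $U_i^s\subseteq C_i$ to $c_i^\ast$ by path-connectedness to give $\{c_i^\ast\}\times B_i$, then move this via leg $i$ and $F_i$ onto $\{(e^\ast,a_i^\ast)\}\times B_i\subseteq X\times\{1\}$, then apply the claim), and $V_{\mathrm{cen}}$ is contractible in $P$ after reducing via $E\simeq\ast$ to the claim. Path-connectedness of $P$ then allows the disjoint pieces of each $V_s$ with $s\geq 1$ to be contracted to a common point. The hardest step will be the same verification for $V_0$, where $\bar V_0$ and $V_{\mathrm{cen}}$ overlap in a tubular collar in the cylinder; I would handle this by interpolating the contractions of $\bar V_0$ and $V_{\mathrm{cen}}$ through the collar so the combined nullhomotopy on $V_0$ is continuous. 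This gives $\{V_0,\ldots,V_k\}$ as a categorical cover of $P$ of size $k+1$, so $\cat(P)\leq k$.
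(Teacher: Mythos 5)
Your approach is genuinely different from the paper's. The paper proves this lemma by induction on $m$: the base case $m=2$ is dispatched using a cited wedge decomposition $P\simeq (\Sigma A_1\wedge A_2)\vee(C_1\rtimes A_2)\vee(C_2\rtimes A_1)$ together with Lemma~\ref{LHalfSmash}, and the inductive step exhibits $P$ as a pushout of $f_m\times\ID_{B_m}$ and $\ID_{A_m}\times g$, where $g$ is shown to be nullhomotopic by a leg-by-leg deformation that is, in spirit, exactly your ``key claim'' (deform $W_\ell$ into $W_{\ell-1}$ by sliding down a mapping cylinder, applying a nullhomotopy, and sliding back up). So your key claim and its inductive proof are correct and faithfully mirror the technical heart of the paper's inductive step; what you do differently is bypass the $m=2$ splitting entirely and build a categorical cover of $P$ directly. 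Your treatment of $V_s$ for $s\geq 1$ (retract $\bar V_s$ onto $S_s$, contract the disjoint pieces $U_i^s\times B_i$ using the key claim, then use path-connectedness of $P$) and of $V_{\mathrm{cen}}$ alone is sound.

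The genuine gap is the $V_0$ step, which you yourself flag as the hardest, but the proposed ``interpolation through the collar'' is more problematic than it sounds and I don't think it can be waved through. Note that $V_0=S_0\cup V_{\mathrm{cen}}$, with $S_0$ \emph{not} open in $V_0$: the closure of $V_{\mathrm{cen}}$ in $P$ meets $S_0$ along $\bigcup_i\mathrm{im}(f_i\times\ID)\cap S_0$, so any nullhomotopy of $V_0\hookrightarrow P$ must, at each time $s$, agree in the limit on $V_{\mathrm{cen}}$-sequences converging into $S_0$ with the nullhomotopy you prescribed on $S_0$. But the two nullhomotopies you build have structurally opposite first stages (the $S_0$ one contracts inside $\mc D$ before leaving via a leg; the $V_{\mathrm{cen}}$ one retracts up to $X\times\{1\}$), and a naive ``push legs down toward $\mc D$'' fails at the center $t=1$ (the leg is ambiguous) while ``push legs up to $X\times\{1\}$'' fails continuity at $S_0$. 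Making them match requires something like the simultaneous collapse-of-$[0,\tfrac12)$-and-expansion-of-$[\tfrac12,1]$ device that the paper uses in Lemma~\ref{LGluing}, and even there the paper only gets a deformation retraction ``in the weak sense''; moreover two nullhomotopies of the same inclusion need not be homotopic rel endpoints, so a partition-of-unity interpolation is not automatic. As written, this step is a real gap. (It is also worth noting that $C_i$ is implicitly required to be path-connected here, as in the paper's proof via Lemma~\ref{LHalfSmash}, to contract $U_i^s$ to the basepoint $c_i^\ast$.)
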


\begin{proof}

We proceed by induction on $m$. Start with $m=2$. By Lemma~$7.1$ in~\cite{MR3084441}, there is a splitting 
$P\simeq (\Sigma A_1\wedge A_2)\vee (C_1\rtimes A_2)\vee(C_2\rtimes A_1)$. Thus using Lemma~\ref{LHalfSmash}, 
$$
\cat(P)=\max\{\cat(\Sigma A_1\wedge A_2),\cat(C_1\rtimes A_2),\cat(C_2\rtimes A_1)\}=\max\{1,\cat(C_1),\cat(C_2)\}.
$$
The statement holds when $m=2$.

Take $\mc B_0=\ast$, $\mc B_\ell=\prod_{j\leq \ell} A_j$, $B'_i=\prod_{j\neq i,j<m} A_j$, and $B_i$ as basepointed subspaces of $\mc B=\prod_j A_j$.
Let $P'$ be the homotopy pushout of $f_i\times\ID_{B'_i}$ for $i=1,\ldots, m-1$ (these are all maps from $E\times B_m=E\times \mc B_{m-1}$).
Suppose the lemma holds whenever $m<m'$ for some $m'>2$. Let $m:=m'$. Then $\cat(P')\leq \max\{1,\cat(C_1),\ldots,\cat(C_{m-1})\}$. 
Notice that $P$ is the homotopy pushout of $f_m\times\ID_{B_m}$ and the inclusion \seqm{A_m\times E\times B_m}{\ID_{A_m}\times g}{A_m\times P'},
where $g$ is the inclusion $W_{m-1}\subset P'$, and $W_\ell=E\times \mc B_\ell\times\{1\}$. 
We can deform $W_\ell$ into $W_{\ell-1}$ in $P'$ as follows. First deform $W_\ell$ onto $f_\ell(A_\ell\times E)\times \mc B_{\ell-1}$ by moving it
down the mapping cylinder $M=((E\times B_m\times [0,1])\sqcup (C_\ell\times B_\ell))/\sim\,$ of $P'$ and onto the base $C_\ell\times B_\ell$,
then deform it onto $\ast\times \mc B_{\ell-1}$ in $C_\ell\times B_\ell$ using the nullhomotopy of $f_\ell$. Finally, 
move $\mc B_{\ell-1}$ back up towards the top of the mapping cylinder $M$ and into $W_{\ell-1}$. Composing these deformations for $\ell=m-1,\ldots,1$
gives a contraction in $P'$ of $W_{m-1}$ to a point. Thus, $g$ is nullhomotopic, as is $f_m$. 
Since the lemma holds for the base case $m=2$, $\cat(P)=\max\{1,\cat(P'),\cat(C_m)\}\leq \max\{1,\cat(C_1),\ldots,\cat(C_m)\}$.

\end{proof}

\begin{lemma}
\label{LGluing}
Fix $m\geq 2$, and for $i=1,\ldots, m$, let $A_i$, $C_i$, $E$ be basepointed $CW$-complexes, $E$ is path-connected, and let $B_i:=\prod_{j\neq i} A_j$.
Suppose \seqm{A_i\times E}{f_i}{C_i} are maps such that the restriction $(f_i)_{|A_i\times\ast}$ of $f_i$ to $A_i\times\ast$ is nullhomotopic,
and $P$ is the homotopy pushout of the maps \seqm{A_i\times E\times B_i}{f_i\times\ID_{B_i}}{C_i\times B_i} for $i=1,\ldots, m$.

(i)
Then 
$$
\cat(P)\leq \max\{1,\cat(C_1),\ldots,\cat(C_m)\}+\Cat(E). 
$$

(ii) Moreover, if each $C_i\times B_i$ is a subcomplex of some $CW$-complex $X_i$ such that 
$X_i-C_i\times B_i$ is contractible in $X_i$, 
and $P'$ is the homotopy pushout of the maps $A_i\times E\times B_i\mapaddr{f_i\times\ID_{B_i}}{C_i\times B_i}\incladdr{}{X_i}$
for $i=1,\ldots, m$, then also
$$
\cat(P')\leq \max\{1,\cat(C_1),\ldots,\cat(C_m)\}+\Cat(E).
$$ 
\end{lemma}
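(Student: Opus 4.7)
The plan is to reduce the problem to the contractible case handled by Lemma~\ref{LGT} by filtering $P$ according to a $\gcat$-cover of $E$ and then invoking Lemma~\ref{LFiltration}. First I would replace $E$ up to homotopy equivalence by a CW-complex $Y$ with $\gcat(Y)=\Cat(E)=:k$; since the homotopy pushout is a homotopy invariant of the input diagram, this replaces $P$ with a homotopy equivalent space and leaves $\cat(P)$ unchanged. Cover this new $E$ by open sets $\{E_0,\ldots,E_k\}$ with each $E_j$ contractible in itself, set $\tilde E_j=E_0\cup\cdots\cup E_j$, and let $\tilde P_j\subseteq P$ be the sub-homotopy-pushout of the restricted maps $\prod_l A_l\times\tilde E_j\to C_i\times B_i$. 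These form a filtration $\tilde P_0\subseteq\tilde P_1\subseteq\cdots\subseteq\tilde P_k=P$. Because $E_0$ is contractible in itself and $E$ is path-connected, $f_i|_{A_i\times E_0}$ is nullhomotopic (it is homotopic, through a contraction of $E_0$ and a path in $E$ to the basepoint, to the nullhomotopic $f_i|_{A_i\times\ast}$), so Lemma~\ref{LGT} applied with $E_0$ in place of $E$ yields $\cat(\tilde P_0)\leq\max\{1,\cat(C_1),\ldots,\cat(C_m)\}$.

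The key technical step is to check that each difference $\tilde P_{j+1}\setminus\tilde P_j$ is contractible in $P$. Such a difference deformation-retracts within itself to the cross-section $\prod_l A_l\times(E_{j+1}\setminus\tilde E_j)\times\{1\}$; contracting $E_{j+1}\setminus\tilde E_j$ inside $E_{j+1}$ and then path-sliding in $E$ to $\ast_E$ deforms this further in $P$ to the central cross-section $S=\prod_l A_l\times\{\ast_E\}\times\{1\}$. To contract $S$ in $P$ I would use the $m$ legs of $\mc S$ iteratively: descend the first leg, identifying $(\vec a,\ast_E,1)$ with $(f_1(a_1,\ast_E),\vec a_{\neq 1})\in C_1\times B_1$; apply the nullhomotopy of $f_1|_{A_1\times\ast}$ to collapse to $\{c_1\}\times B_1$ with $c_1=f_1(\ast_{A_1},\ast_E)$; reinterpret the latter as $\{\ast_{A_1}\}\times\{\ast_E\}\times B_1\times\{0_1\}$ in the cylinder (valid by the choice of $c_1$); ascend back to $\{1\}$; and repeat on the second leg, third leg, and so on. After $m$ such iterations every $A_l$-coordinate has been replaced by its basepoint and $S$ collapses to a single point of $C_m\times B_m$. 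Lemma~\ref{LFiltration} then gives $\cat(P)\leq\cat(\tilde P_0\hookrightarrow P)+k\leq\max\{1,\cat(C_1),\ldots,\cat(C_m)\}+\Cat(E)$.

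For the second assertion I would use the extended filtration $\tilde P'_j=\tilde P_j$ for $j<k$ and $\tilde P'_k=P'$, noting that each $X_i$ is attached to $P$ along $C_i\times B_i\subseteq\tilde P_{k-1}$, so the only new difference is $\tilde P'_k\setminus\tilde P'_{k-1}=(\tilde P_k\setminus\tilde P_{k-1})\sqcup\bigsqcup_{i}(X_i-C_i\times B_i)$, a disjoint union of sets each contractible in $P'$ (the first by the previous paragraph, each $X_i-C_i\times B_i$ by hypothesis); assuming $P'$ is path-connected these contractions combine into a single nullhomotopy in $P'$, and Lemma~\ref{LFiltration} with $\tilde P'_0=\tilde P_0$ yields the same bound for $\cat(P')$. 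The step I expect to be the main obstacle is the contraction of $S$ in $P$: each descend/nullhomotope/ascend cycle works combinatorially, but one must carefully track the running basepoint substitutions $a_l\mapsto\ast_{A_l}$ introduced at each iteration and verify their compatibility with the cylinder identifications at every subsequent stage, which is where both the hypothesis $m\geq 2$ and the star-like structure of $\mc S$ play an essential role.
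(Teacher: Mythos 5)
Your overall strategy --- replace $E$ by a CW-complex $Y$ with $\gcat(Y)=\Cat(E)$, use its $\gcat$-cover to split $P$ into pieces, invoke Lemma~\ref{LGT} on the piece over a contractible open set containing the basepoint, and contract the other pieces by sliding along the legs of $\mc S$ using the nullhomotopies of the $(f_i)_{|A_i\times\ast}$ --- is in the same spirit as the paper's, and your contraction of the cross-section $S$ in $P$ is essentially the paper's deformation of the $W_\ell$'s. But routing the argument through Lemma~\ref{LFiltration} introduces a genuine gap.

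The problem is that Lemma~\ref{LFiltration} (and Lemma~\ref{LOpen} on which its proof rests) requires the filtration $\tilde P_0\subseteq\cdots\subseteq\tilde P_k=P$ to consist of \emph{subcomplexes} of a CW-complex. Your $\tilde P_j$ are built from the \emph{open} sets $\tilde E_j=E_0\cup\cdots\cup E_j\subseteq Y$, so as subsets of $P$ they take the form $\mc D\cup\bigl(\prod_l A_l\times\tilde E_j\times\mc S'_0\bigr)$: they contain the closed base $\mc D$ and an open cylinder piece, and in general are neither open nor closed in $P$, let alone subcomplexes. The mechanism of Lemma~\ref{LFiltration} is that each $\tilde P_j$ being a subcomplex forces it to be closed in $\tilde P_{j+1}$, so that $\tilde P_{j+1}-\tilde P_j$ is open in $\tilde P_{j+1}$ and can then be thickened to an open subset of $P$ via Lemma~\ref{LOpen}. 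With your $\tilde P_j$ the differences $\tilde P_{j+1}\setminus\tilde P_j=\prod_l A_l\times(E_{j+1}\setminus\tilde E_j)\times\mc S'_0$ are not open in $\tilde P_{j+1}$, and since $\tilde P_{j+1}$ is not a subcomplex there is no way to invoke Lemma~\ref{LOpen} to thicken them. So Lemma~\ref{LFiltration} simply does not apply to your filtration.

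The paper sidesteps this by not filtering at all: it defines $V_j=Q_j-\mc D\cong U_j\times\mc B\times\mc S'_0$ using the \emph{individual} (not cumulative) cover sets $U_j$, observes that each such $V_j$ is genuinely open in $Q$ (being a product of open sets inside the open subspace $E'\times\mc B\times\mc S'_0\subset Q$), shows each $V_j$ is contractible in $Q$ by the leg-descent argument you also describe, and then glues a categorical cover $\{U'_0,\ldots,U'_{k'}\}$ of an open thickening $Q'_0$ of $Q_0$ to $\{V_1,\ldots,V_k\}$ to get a categorical cover of $Q$ directly. That construction is exactly what replaces the failed filtration step, and a corresponding adjustment (replacing $\mc D$ by $\coprod X_i$ and using Lemma~\ref{LOpen} to extend the $U'_j\cap\mc D$-pieces into the $X_i$) gives the second assertion. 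To repair your argument you would need either to make this direct cover construction, or to prove and invoke a variant of Lemma~\ref{LFiltration} tailored to this kind of ``open-from-above, closed-from-below'' filtration, which is not a cosmetic change.

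One smaller point for the second assertion: you write that each $X_i$ is attached to $P$ along $C_i\times B_i\subseteq\tilde P_{k-1}$; in fact $\mc D=\coprod_i C_i\times B_i$ sits inside \emph{every} $\tilde P_j$ including $\tilde P_0$, so the attachment happens at the very bottom of the filtration, not only at level $k-1$. This does not itself create an error, but the phrasing suggests a subtlety about where the $X_i$ enter the filtration that would need to be pinned down before the filtration argument (if salvaged) could be applied.
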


\begin{proof} (i) 
Let $\mc B:=A_1\times\cdots\times A_m$, and $\mc D=\coprod_{i=1,\ldots,m}(C_i\times B_i)$,
and let $\mc S_t$ for $t<1$ be $m$ copies of the interval $[t,1]$ glued together at the endpoints $1$, 
and $\mc S'_t$ be its interior, namely, $m$ copies of $(t,1]$ glued at $1$.

Let $k:=\Cat(E)$ and take $E'\simeq E$ to be such that $k=\gcat(E')$.
Then $P$ is homotopy equivalent to the homotopy pushout $Q$ of the maps
\seqm{A_i\times E'\times B_i}{f'_i\times\ID_{B_i}}{C_i\times B_i} for $i=1,\ldots, m$, 
where $\seqm{A_i\times E'}{f'_i}{C_i}$ is the composite of $f_i$ with the homotopy equivalence
$\seqm{A_i\times E'}{\ID_{A_i}\times\simeq}{A_i\times E}$.
Since $E$ is path-connected and $\gcat()$ is unaffected by attaching an interval $[0,1]$ to a space,
we may assume that the homotopy equivalence \seqm{E'}{\simeq}{E} is basepointed for some $\ast\in E'$. 

Let $U_0,\ldots,U_k$ be an open cover of $E'$ with each $U_i$ a contractible subspace.
Take $Q_j$ to be the homotopy pushout of 
\seqm{A_i\times U_j\times B_i}{g_{i,j}\times\ID_{B_i}}{C_i\times B_i} for $i=1,\ldots, m$, 
where $g_{i,j}$ is the restriction of $f'_i$ to $A_i\times U_j$, and let
$V_j=Q_j-\mc D\,\cong\,U_j\times\mc B\times \mc S'_0$. 
Since $g_{i,j}\times\ID_{B_i}$ restricts $f'_i\times\ID_{B_i}$, 
$Q_j$ is a subspace of $Q$ and $V_j$ is open in $Q$.
Moreover, we may contract $V_j$ in $Q$ to a point as follows.
Let $\mc B_0:=\ast$ and $\mc B_\ell=\prod_{i\leq\ell} A_i\,\subseteq\,\mc B$,
and take the subspace $W_\ell=\ast\times\mc B_\ell\times\{1\}$ of $E'\times\mc B\times\mc S'_0\subset Q$.
We can deform $W_\ell$ into $W_{\ell-1}$ in $Q$, first by deforming $W_\ell$ onto 
$f'_\ell(A_\ell\times\ast)\times\mc B_{\ell-1}$ by moving it down the mapping cylinder 
$M=((E'\times \mc B\times [0,1])\sqcup (C_\ell\times B_\ell))/\sim\,$ of $Q$ and onto $C_\ell\times B_\ell$,
then deforming it onto $\ast\times \mc B_{\ell-1}$ in $C_\ell\times B_\ell$ 
using the nullhomotopy of $(f'_\ell)_{|A_\ell\times\ast}$, and finally, 
moving $\mc B_{\ell-1}$ back up towards the top of the mapping cylinder $M$ and into $E'\times \mc B\times\{1\}$.
Composing these deformations of $W_\ell$ into $W_{\ell-1}$ in $Q$ for $\ell=m,m-1,\ldots,1$,
and deforming $V_j$ onto $W_m$ using contractibility of $U_j$ and $\mc S'_0$ (onto $1$),
gives our contraction of $V_j$ in $Q$ to a point.

Assume $\ast\in U_0$. 
Since $U_0$ is contractible and $(g_{i,0})_{|A_i\times\ast}=(f'_i)_{|A_i\times\ast}$ is nullhomotopic,
$g_{i,0}$ is also nullhomotopic. Lemma~\ref{LGT} then applies to $Q_0$, namely, we have 
$$
\cat(Q_0)\leq \max\{1,\cat(C_1),\ldots,\cat(C_m)\}.
$$
Let $\mc R:=\mc S'_0-\mc S_{\frac{1}{2}}\,\cong\,\coprod_{i=1,\ldots,m}(0,\frac{1}{2})$
and $\bar{\mc R}=\mc S_0-\mc S_{\frac{1}{2}}\,\cong\,\coprod_{i=1,\ldots,m}[0,\frac{1}{2})$,
and consider the open subspace $Q'_0 = Q_0\cup(E'\times \mc B\times\mc R)$ of $Q$.
Notice $Q'_0$ deformation retracts in the weak sense onto $Q_0$ by deformation retracting the subspace of $Q_0$
$$
\paren{(E'\times \mc B\times\bar{\mc R})\,\sqcup\,\mc D\,}/\sim
$$ 
onto $\mc D$, this being done by contracting each copy of $[0,\frac{1}{2})$ in the factor $\bar{\mc R}$ to $0$, 
at the same time expanding $(U_0\times\mc B)\times \mc S_{\frac{1}{2}}$ in $Q'_0$ by expanding each copy of $[\frac{1}{2},1]$ 
in the factor $\mc S_{\frac{1}{2}}$ outwards to $[0,1]$. Then $\cat(Q'_0)=\cat(Q_0)$.
So take $k'=\max\{1,\cat(C_1),\ldots,\cat(C_m)\}$ and $\{U'_0,\ldots,U'_{k'}\}$ to be a categorical cover for $Q'_0$.
Notice that $U'_i$ is open in $Q$ since $Q'_0$ is, and $Q=\bigcup_{j=0}^n Q_j = Q'_0\cup \bigcup_{j=1}^n V_j$.
As each $V_j$ is open and contractible in $Q$, 
then $\{U'_0,\ldots,U'_{k'},V_1,\ldots,V_k\}$ is a categorical cover of $Q$. 
Therefore $\cat(P)=\cat(Q)\leq k'+k$.

(ii)
Since $C_i\times B_i$ is a subcomplex of $X_i$, $P$ is a subspace of $P'$ with 
$$
P'-P=\cdunion{i=1,\ldots,m}{}{(X_i-C_i\times B_i)},
$$ 
so $P'-P$ is open and contractible in $P'$. Notice each $V_j$ is an open (and contractible) subset of $P'$, 
while $S_j=U'_j\cap(\coprod_{i=1,\ldots,m} X_i)$ is an open subset of $\mc D$. 
By Lemma~\ref{LOpen}, there exists an open subset $R_j$ of $\coprod_{i=1,\ldots,m} X_i$ that deformation retracts onto $S_j$
such that $R_j\cap \mc D=S_j$.
Then $R'_j=R_j\sqcup (U'_j-S_j)$ is an open subset of $P'$ that deformation retracts onto $U'_j$, 
thus is contractible in $P'$. Since $P'-P$ and $V_j$ are both open in $P'$, and $(P'-P)\cap V_j=\emptyset$,
then the subspace $(P'-P)\sqcup V_j$ is contractible in $P'$. 
We can therefore take $\{R'_0,\ldots,R'_{k'},(V_1\sqcup(P'-P)),V_2,\ldots,V_k\}$ as a categorical cover for $P'$,
so $\cat(P')\leq k'+k$.
\end{proof}

\section{Moment-Angle Complexes}

Given a simplicial complex $K$ on vertex set $[n]$ and a sequence of pairs of spaces 
$$
\mc S:=((X_1,A_1),\ldots,(X_n,A_n)), 
$$ 
$A_i\subseteq X_i$, the \emph{polyhedral product} $\mc S^K$ is the subspace of $X^{\times n}$ defined by
$$
\mc S^K:=\bigcup_{\sigma\in K}Y^\sigma_1\times\cdots\times Y^\sigma_n,
$$ 
where $Y^\sigma_i=X_i$ if $i\in\sigma$, or $Y^\sigma_i=A_i$ if $i\notin\sigma$.
If the pairs $(X_i,A_i)$ are all equal to the same pair $(X,A)$, we usually write $\mc S^K$ as $(X,A)^K$.
The \emph{moment-angle complex} $\zk$ is defined as the polyhedral product $(D^2,\bd D^2)^K$,
and the \emph{real moment-angle complex} $\rzk$ is the polyhedral product $(D^1,\bd D^1)^K$.

The \emph{join} of two simplicial complexes $K$ and $L$ is the simplicial complex 
$K\ast L=\cset{\sigma\sqcup\tau}{\sigma\in K,\,\tau\in L}$,
and one has $|K\ast L|\cong |K|\ast|L|\simeq\Sigma|K|\wedge|L|$ and $\z{K\ast L}\cong\zk\times\zl$.
If $I\subseteq [n]$, $K_I=\cset{\sigma\in K}{\sigma\subseteq I}$ denotes the \emph{full subcomplex} of $K$ on vertex set $I$,
in which case $\z{K_I}$ is a retract of $\zk$.
Notice that if $K_I$ and $L_J$ are full subcomplexes of $K$ and $L$, 
then $K_I\ast L_J$ is the full subcomplex $(K\ast L)_{I\sqcup J}$ of $K\ast L$.
As a convention, we let $\z{\emptyset}:=\ast$ when $\emptyset$ is on empty vertex set.

We let $S^0$ denote both the $0$-sphere and the simplicial complex $\bd\Delta^1$ consisting of only two vertices. 
Generally, we assume our simplicial complexes (except $\emptyset$) are non-empty and have no ghost vertices,
unless stated otherwise.

\subsection{The Hochster theorem}
\label{SHochster}

When $R$ is a field or $\mb Z$, it was shown in~\cite{MR1897064,MR2255969,MR2117435,MR0441987} that there are isomorphisms 
of graded commutative algebras
\begin{equation}
\label{EHochster}
H^*(\zk;R)\cong \mathrm{Tor}_{R[v_1,\ldots,v_n]}(R[K],R) \cong \cplus{I\subseteq[n]}{}{\tilde H^*(\Sigma^{|I|+1}|K_I|;R)}.
\end{equation}
The isomorphism on the left is induced by a quasi-isomorphism of DGAs between the Koszul complex of the Stanley-Reisner ring $R[K]$
and the cellular cochain complex of $\zk$ with coefficients in $R$. 
The multiplication on the right is given by maps \seqm{H^*(K_I)\otimes H^*(K_J)}{}{H^{*+1}(K_{I\cup J})} 
that are zero when $I\cap J\neq\emptyset$, otherwise they are induced by maps
$\iota_{I,J}\wcolon\seqm{|K_{I\cup J}|}{}{|K_I\ast K_J|\cong |K_I|\ast |K_J|\simeq\Sigma |K_I|\wedge |K_J|}$
geometrically realising the canonical inclusions \incl{K_{I\cup J}}{}{K_I\ast K_J}.
One can iterate so that any length $\ell$ product
\seqm{\bigotimes^\ell_{i=1} H^*(K_{I_i})}{}{H^{*+\ell-1}(K_{I_1\cup\cdots\cup I_\ell})} is induced by the inclusion 
$$
\iota_{I_1,\ldots,I_\ell}\wcolon\incl{|K_{I_1\cup\cdots\cup I_\ell}|}{}{|K_{I_1}\ast\cdots\ast K_{I_\ell}|}
$$ 
where the $I_i$'s are mutually disjoint.

\subsection{A necessary condition}

The Hochster theorem lets us make statements about general bounds on the category $\cat(\zk)$ in terms of combinatorics and 
topology of $K$ and its full subcomplexes. Suppose $\cat(\zk)\leq \ell-1$, so cup products of length $l$ vanish in $H^+(\zk)$.
Then in light of the Hochster theorem, the inclusions $\iota_{I_1,\ldots,I_\ell}$ must induce trivial maps on cohomology. 
In fact, their suspensions must be nullhomotopic.

\begin{proposition}
If $\cat(\zk)\leq \ell-1$, then
$$
\Sigma^{m+1}\iota_{I_1,\ldots,I_\ell}\wcolon\incl{|K_{I_1\cup\cdots\cup I_\ell}|}{}{|K_{I_1}\ast\cdots\ast K_{I_\ell}|}
$$ 
is nullhomotopic for all mutually disjoint $I_1,\ldots,I_\ell\subseteq [n]$.
\end{proposition}

\begin{proof}

Let $\hzk:=\zk/\cset{(x_1,\ldots,x_n)\in \zk}{\mbox{at least one }x_i=\ast}$.
Fix $m=|I_1\cup\cdots\cup I_\ell|$, $Y=\z{K_{I_1\cup\cdots\cup I_\ell}}$, and $\hat Y=\hz{K_{I_1\cup\cdots\cup I_\ell}}$. 
Since $Y$ is a retract of $\zk$, $\cat(Y)\leq\ell-1$.
Recall from~\cite{MR516214} that a path-connected basepointed $CW$-complex such as $Y$ satisfies $\cat(Y)\leq\ell-1$ if and only if 
there is a map \seqm{Y}{\psi}{FW_\ell(Y)} such that the diagonal map \seqm{Y}{\vartriangle}{Y^{\times\ell}} 
factors up to homotopy as \seqmm{Y}{\psi}{FW_\ell(Y)}{include}{Y^{\times\ell}}.
Here $FW_\ell(Y)=\cset{(y_1,\ldots,y_\ell)\in Y^{\times \ell}}{\mbox{at least one }y_i=\ast}$ is the fat wedge.  
This implies the reduced diagonal map
$\bar\vartriangle\wcolon\seqmmm{Y}{\vartriangle}{Y^{\times\ell}}{}{Y^{\times\ell}/FW_\ell(Y)}{\cong}{Y^{\wedge\ell}}$
is nullhomotopic. Then so is
$\zeta\wcolon\seqmmm{Y}{\bar\vartriangle}{Y^{\wedge\ell}}{}{\bigwedge_j\z{K_{I_j}}}{}{\bigwedge_j\hz{K_{I_j}}}$,
where the second last map is the smash of the coordinate-wise projection maps onto each $\z{K_{I_j}}$, 
and the last map is the smash of quotient maps. 
This last nullhomotopic map $\zeta$ coincides with $\seqmm{Y}{q}{\hat Y}{\hat\iota}{\bigwedge_j\hz{K_{I_j}}}$,
where $q$ is the quotient map and $\hat\iota$ is the inclusion given simply by rearranging coordinates. 
Moreover, 
$\hat\iota$ is homeomorphic to $\Sigma^{m+1}\iota_{I_1,\ldots,I_\ell}$ and $\Sigma q$ has a right homotopy inverse
(c.f.~\cite{MR2673742}, and also the proof of Proposition~$2.5$ and pg. $23$ in~\cite{MR3658721}).
It follows that $\Sigma^{m+1}\iota_{I_1,\ldots,I_\ell}$ is nullhomotopic.

\end{proof}


\subsection{Filtrations and lower bounds}

To give combinatorial lower bounds for $\cat(\zk)$ we construct cup products in $\cat(\zk)$  using information coming from the 
combinatorics of $K$. We state this in terms of being able to construct certain filtrations of $K$ of a bounded length. Let
$$
K\sm L :=  \cset{\tau\in K}{\sigma\not\subseteq\tau\mbox{ for any }\sigma\in L}.
$$
denote the \emph{deletion} of subcomplex $L$ from $K$. Notice $K\sm L$ and $K\sm (K\sm L)$ are full subcomplexes of $K$ on complementary
vertex sets: if $K$ is on vertex set $[n]$ and $K\sm L$ is on vertex set $I\subset[n]$, then $K\sm (K\sm L)$ is on the same vertex set $[n]-I$ 
as $L$, and we can take the canonical inclusion
$$
\incl{K}{}{K_I\ast K_{[n]-I} \,=\, (K\sm L)\ast K\sm (K\sm L)}.
$$
If $L$ itself is a full subcomplex, then $L=K\sm (K\sm L)$.

\begin{lemma}
\label{LSchoenflies}
If $K$ is a triangulated $d$-sphere on vertex set $[n]$ and $L$ is a full subcomplex that is a triangulated $(d-1)$-sphere,
then $|K\sm L|\simeq|K|\sm |L|\simeq |E_1|\sqcup |E_2|$ such that $E_1$ and $E_2$ are disjoint contractible subcomplexes of $K$.

More generally, if $K$ is a triangulated closed $d$-manifold and $L$ is a full subcomplex that is a triangulated $(d-1)$-sphere 
which bounds a triangulated $d$-disk in $K$, then at least one of $E_1$ or $E_2$ is contractible.
\end{lemma}

\begin{proof}
For the case $|K|\cong S^d$, since $L$ and $K$ are finite, the embedding of $|L|$ in $|K|$ can be thickened on either side to an embedding of a thickened 
sphere. Thus by the generalised Schoenflies theorem $|L|$ bounds two distinct $d$-disks (hemispheres) $|D_1|$  and $|D_2|$ of the sphere $|K|$, and 
$|K_{[n]-I}|\cong |E_1|\sqcup |E_2|$ for full subcomplexes $E_i\subsetneq D_i$ obtained by deleting vertices from $D_i$ that are in $L$. Notice that $|D_i|$ 
deformation retracts onto $|E_i|$ as follows. Take an open collar neighbourhood $C:=[0,1)\times S^{d-1}$ of $|L|$ in $|D_i|$ small enough such that the collar 
does not intersect $|E_i|$ (i.e. is contained only in the $d$-faces that have vertices on the boundary $|L|=\bd|D_i|$), and deformation retract $|D_i|$ onto 
$|D_i|\sm |C|$ by contracting the collar onto $S^{d-1}$.
Take a $d$-face $\sigma$ in $D_i$ with vertices on $L$, and write $\sigma:=\{v_1,\ldots,v_k,w_1,\ldots,w_{d-k}\}$ so that the $v_i$'s are in $L$
and $w_i$'s are in $E_i$. There must be at least one of each type since $L$ is a full subcomplex with no $d$-faces. For any point $x\in |\sigma|$ 
that is not on the $(d-1)$-face $\{v_1,\ldots,v_k\}$, $x$ lies on the line interpolating barycentric coordinates
$$
l(t)\,:=\,(1-t)(d_1,\ldots,d_k,e_1,\ldots,e_{d-k}) + t\frac{(0,\ldots,0,e_1,\ldots,e_{d-k})}{e_1+\ldots+e_{d-k}}
$$ 
in $|\sigma|$, where $(d_1,\ldots,d_k,e_1,\ldots,e_{d-k})$ are the barycentric coordinates of $x$ in $|\sigma|$. When $x\in D_i\sm C$, these lines end at 
$t=1$ on a point in $E_i$ since at least one of $e_i>0$. Also, they vary continuously as $x$ varies, and they agree on shared boundaries of $d$-faces. 
Thus following these lines from $t=0$ to $t=1$ defines a deformation retractions of $|D_i|\sm |C|$ onto $|E_i|$, so 
$|E_i|\simeq |D_i|\sm |C|\simeq |D_i|\simeq|\ast|$ and we are done. The argument when $|K|$ is a $d$-manifold is similar.
\end{proof}

\begin{lemma}
\label{LFundClass1}
If $K$ is a triangulated $d$-sphere on vertex set $[n]$ and $L$ is a full subcomplex that is a triangulated $(d-1)$-sphere, then the inclusion
$$
\iota\wcolon\incl{|K|}{}{|(K\sm L)\ast L| \,\cong\, |K\sm L|\ast |L|}
$$
is a homotopy equivalence \seqm{S^d}{}{S^d}.

More generally, if $K$ is a triangulated closed $d$-manifold and $L$ is a full subcomplex that is a triangulated $(d-1)$-sphere which bounds a 
triangulated $d$-disk in $K$, then $\iota$ induces an isomorphism on degree $d$ cohomology mapping a generator to the fundamental class of $K$.
\end{lemma}

\begin{proof}
For the case $|K|\cong S^d$, by Lemma~\ref{LSchoenflies}, $|K\sm L|\cong |E_1|\sqcup |E_2|$ for some disjoint contractible subcomplexes 
$|E_i|$ of $|K|$. Let 
$$
h\wcolon\seqm{|K\sm L|\ast |L|}{}{S^0\ast |L|\,\cong\, S^d}
$$ 
be the join of the identity map on the right factor with the map collapsing $|E_1|$ and $|E_2|$ to $-1$ and $1$ in $S^0=\{-1,1\}$ on the right factor. 
Then $h$ is a homotopy equivalence since each $|E_i|$ is contractible. Likewise, $h\circ\iota$ is the quotient map that collapses each $|E_i|$ to a
distinct point, so it is a homotopy equivalence since the $E_i$'s are disjoint contractible subcomplexes of $K$. Therefore, $\iota$ is a homotopy equivalence.

To prove the general case, assume by Lemma~\ref{LSchoenflies} that $E_2$ is contractible. Composing $h$ with the map $q$ that collapses the bottom 
hemisphere of $S^d$ to $-1$, we see that $q\circ h\circ \iota$ is the map that collapses everything outside the interior of $d$-disk $D$ in $K$ that bounds 
$L$ to the point $-1$, and collapses the subcomplex $E_2$ in the interior of $D$ to the point $1$. But since $E_2$ is contractible, $q\circ h\circ \iota$ is 
homotopy equivalent to the map that simply collapses the everything outside the interior of $D$, which is the quotient map of the top $d$-cell of the 
$d$-manifold $K$ to a $d$-sphere. Since this induces a map on cohomology that sends the fundamental class of $S^d$ to the fundamental class of $K$, 
we are done.

\end{proof}

\begin{definition}
Given $K$ is a triangulated closed connected $d$-manifold on vertex set $[n]$, suppose there is a sequence
$I_\ell\subsetneq\cdots\subsetneq I_1=[n]$  such that the filtration of full subcomplexes
$$
K_{I_\ell}\subsetneq\cdots\subsetneq K_{I_1}:=K
$$ 
satisfies
\begin{enumerate}
\item $K_{I_i}$ is a triangulation of a $(d+1-i)$-sphere when $i\geq 2$;
\item $K_{I_2}$ bounds the triangulation of a $d$-disk in $K=K_{I_1}$
(i.e. there exists $I_2\subsetneq J\subsetneq [n]$ such that $|K_J|\cong D^d$).
\end{enumerate}
Then we say that this is a \emph{spherical filtration} of $K$ of length $\ell$. 
\end{definition}

\begin{remark} 
Condition (2) is redundant when $K$ is a triangulated $d$-sphere by the generalised Schoenflies theorem.
\end{remark}

\begin{definition}
For any triangulated closed manifold $K$, define its \emph{full filtration length} $\filt(K)$ to be the largest integer $\ell$ 
such that $K$ admits a spherical filtration of length $\ell$.
\end{definition}

\begin{proposition}
\label{PFiltCup}
If some full subcomplex $K_I\subseteq K$ is a triangulated closed manifold with a spherical filtration of length $\ell$,
then $\ell\leq\cuplen(\zk)$.
\end{proposition}
\begin{proof}

Let $K_{I_\ell}\subsetneq\cdots\subsetneq K_{I_1}:=K_I$ be a spherical filtration of $K_I$ length $\ell$. 
Let $J_{i+1}=I_i-I_{i+1}$ for $i<\ell$. By Lemma~\ref{LFundClass1}, each inclusion
$$
\iota_i\wcolon\incl{|K_{I_i}|}{}{|K_{I_{i+1}}\ast K_{J_{i+1}}|\cong |K_{I_{i+1}}|\ast |K_{J_{i+1}}|}
$$ 
is a homotopy equivalence when $i\geq 2$. Take the composite of inclusions
\begin{equation}
\label{EComposite}
|K_{I_1}|\incladdr{\iota_1}{|K_{I_2}\ast K_{J_2}|}\incladdr{\iota'_2}{|K_{I_3}\ast K_{J_3}\ast K_{J_2}|}\incladdr{\iota'_3}{\cdots}
\incladdr{\iota'_{\ell-1}}{|K_{I_\ell}\ast K_{J_\ell}\ast\cdots\ast K_{J_2}|}
\end{equation}
where the $i^{th}$ map ($i\geq 2$) in this composite  
$$
\iota'_i\wcolon
\incl{|K_{I_i}\ast K_{J_i}\ast\cdots\ast K_{J_2}|}{}{|K_{I_{i+1}}\ast K_{J_{i+1}}\ast K_{J_i}\ast\cdots\ast K_{J_2}|}
$$
is the join of the homotopy equivalence $\iota_i$ and the identity \seqm{|K_{J_i}\ast\cdots\ast K_{J_2}|}{\ID_i}{|K_{J_i}\ast\cdots\ast K_{J_2}|}.
Since each $|K_{J_j}|\simeq S^0$ for $j\geq 3$, then $\ID_i$ is homotopy equivalent to the identity \seqm{S^{i-2}}{\ID}{S^{i-2}}.
Each $\iota'_i$ (and $\iota_i$) is therefore a homotopy equivalence when $i\geq 2$, and then so is the composite in~\eqref{EComposite} of all the maps
except possibly the first. The first map $\iota_1$ on the other hand induces a non-trivial map to the fundamental class of $K_I$ by Lemma~\ref{LFundClass1}.
Therefore~\eqref{EComposite} induces a non-trivial map on cohomology mapping the fundamental class of $S^d$  to the fundamental class of $K_I$, 
so the Hochster theorem implies there is a non-trivial length $\ell$ cup product in $H^+(\z{K_{I_1}})$.
\end{proof}

\begin{corollary}
\label{CFiltCup}
If $K$ is a triangulated closed connected manifold, then 
$$
\filt(K)\leq\cuplen(\zk)\leq \cat(\zk).
$$
More generally for any $K$, since each $\z{K_I}$ is a retract of $\zk$, then 
$$
\filt(K_I)\leq\cuplen(\zk)\leq \cat(\zk)
$$
whenever $K_I$ is a triangulated closed connected manifold.
~$\qqed$
\end{corollary}

With some additional effort the statement in Lemma~\ref{LFundClass1} can be generalized so that it does away with the triangulated sphere needing to be a full 
subcomplex. In particular, Proposition~\ref{PFiltCup} still holds when we modify the definition of spherical filtration so that the second full subcomplex $K_{I_2}$ 
in the filtration is replaced with any triangulated $(d-1)$-sphere $L$ that is not necessarily a full subcomplex, as long as it satisfies the hypothesis in Lemma~\ref{LFundClass2}
below. The proof of Proposition~\ref{PFiltCup} then follows as before, this time using  Lemma~\ref{LFundClass2} in place of Lemma~\ref{LFundClass1} for the map $\iota_1$.

\begin{lemma}
\label{LFundClass2}
Suppose $K$ is a triangulated closed connected $d$-manifold, $L\subsetneq K$ is a triangulation of a $(d-1)$-sphere that bounds a triangulation of a $d$-disk $D$ in $K$ 
such that $D\sm L$ is non-empty and connected, and $K\sm L$ consists of two non-empty disconnected components $A_0$ and $A_1$. Then the inclusion
$$
\iota\wcolon\incl{|K|}{}{|(K\sm L)\ast K\sm (K\sm L)|}
$$
induces an isomorphism on degree $d$ cohomology, mapping the fundamental class of $S^d$ to the fundamental class of $K$.
\end{lemma}

\begin{proof}
Denote $A:=A_0\sqcup A_1:=K\sm L$ and $B:=K\sm A$, and let $D\subsetneq K$ be a choice of triangulated $d$-disk that is bounded by the triangulated 
$(d-1)$-sphere $L$. Notice that $D$ cannot be a subcomplex of $B$ and all vertices of $D$ not in $L$ must all be in exactly one of $A_0$ or $A_1$ since 
$D\sm L$ is non-empty and connected.

Consider the inclusion $g\colon\seqm{L}{}{B}$ restricting to the identity on vertices. Topologically, the inclusion $g\colon\seqm{|L|}{}{|B|}$ has a left 
homotopy inverse 
$$
r\wcolon\seqm{|B|}{}{|L|\cong S^{d-1}}
$$
by the following argument. Note that $B$ has no faces above dimension $d$. Also, no $d$-faces can form a cycle in the simplicial chain group $C_d(B)$. 
If they did, then this would at the same time be a cycle in $C_d(K)$ that is distinct from the cycle $\xi$ corresponding to the fundamental class of $H^*(K)$ 
consisting of all $d$-faces of $K$  (since $K\sm L$ has two non-empty disconnected components), and they could not be homologous to themselves or zero since there are no $(d+1)$-faces, meaning $H_d(K)$ has two $\mb Z$ generators, which contradicts Poincar\'e duality. 
Then $H_*(B)$ is trivial in degrees $d$ and above, implying $B$ is homotopy equivalent to at most a $(d-1)$-dimensional $CW$-complex, and so we can quotient the 
$(d-2)$-skeleton of $B$ to obtain a map \seqm{|B|}{}{\bigvee_\alpha S^{d-1}} into a possibly empty wedge of $(d-1)$-spheres that induces a surjection on 
$H_{d-1}$. Thus, all that is left to show is that $g$ induces a non-trivial map on $H_{d-1}$. In this direction, take the cycle in $C_{d-1}(L)$ consisting of all 
$(d-1)$-faces of $L$ yielding its fundamental class, which is a cycle $\gamma$ in $C_{d-1}(B)$ under the inclusion $g$. We are done if we can show $\gamma$ is not a 
boundary in $C_{d-1}(B)$. To see this, suppose conversely that $\gamma$ is a boundary of an element $a\in C_d(B)\subsetneq C_d(K)$. Since the $d$-disk $D$ is 
bounded by $L$, $\gamma$ is also boundary of the element $b\in C_d(K)$ consisting of the $d$-faces of $D$, and $a\neq b$ since $D$ is not a subcomplex of $B$. 
Then $a-b$ is a cycle in $C_d(K)$ distinct from the cycle $\xi$ (since it does not contain those $d$-faces with a vertex in one of $A_0$ or $A_1$), 
which contradicts Poincar\'e duality by the same argument as before. 

Now consider the composite
$$
f\colon\seqmmm{|K|}{\iota}{|A\ast B|\cong |A|\ast|B|}{\ID\ast r}{|A|\ast S^{d-1}}{q\ast\ID}{S^0\ast S^{d-1} \cong S^d}
$$
where the last map $q\ast\ID$ collapses the components $A_0$ and $A_1$ of $A$ to either point of $S^0$. Thus $f$ maps $|A_0|$ and $|A_1|$ to distinct 
hemispheres of $S^d\cong S^0\ast S^{d-1}$ sharing the equator $S^{d-1}\subsetneq S^d$. In turn, $f$ maps the pair $(|D|,|L|)$ to one of the hemispheres such that 
$f$ restricts on $|L|$ to $r\circ g$, mapping $|L|$ to the equator. In other words, $f$ restricts to a map of pairs \seqm{(|D|,|L|)}{f'}{(D^d,S^{d-1})}, which represents an 
element of $\pi_d(D^d,S^{d-1})$ since $(|D|,|L|)\cong (D^d,S^{d-1})$. Since the boundary map \seqm{\pi(D^d,S^{d-1})}{\bd}{\pi_{d-1}(S^{d-1})} is an isomorphism 
$\mb Z\mapsto\mb Z$ in the homotopy long exact sequence of the pair $(D^d,S^{d-1})$, and since $\bd([f'])=[r\circ g]$ and $r\circ g$ is homotopic to the identity,
then $f'$ represents the identity, meaning $f'$ is homotopic to a homeomorphism \seqm{(|D|,|L|)}{}{(D^2,S^{d-1})}. Then using the homotopy extension property, 
$f$ is homotopic to a map that restricts to such a homeomorphism, while still mapping everything outside $|D|$ to the opposing hemisphere. Now collapsing the opposing
hemisphere to a point, $f$ is homotopic to a map that collapses everything outside a $d$-disk in the $d$-manifold $K$ to a point. Namely, this is the map 
quotienting the $d$-cell of $|K|$ to a sphere that induces an isomorphism on degree $d$ cohomology between their fundamental classes. 
Since $f$ factors through $\iota$, we are done.

\end{proof}

\subsection{Skeleta and suspension on coordinates}

Let $K$ be a simplicial complex on vertex set $[n]$ and let $\sk{K}{i}$ denote the $i$-skeleton of $K$, and $\sk{K}{-1}:=\emptyset$. 
An inclusion of simplicial complexes \incl{L}{}{K} induces a canonical inclusion of $CW$-complexes \incl{\zl}{}{\zk}. 
This gives $\zsk{K}{i}$ and $\zsk{K}{-1}=(\bd D^2)^{\times n}=(S^1)^{\times n}$ as $CW$-subcomplexes of $\zk$.

\begin{lemma}[Corollary~$3.3$ in~\cite{MR3084441}]
\label{LGT2}
If $K$ is on vertex set $[n]$ with no ghost vertices, then $\zsk{K}{-1}=(\bd D^2)^{\times n}$ is contractible in $\zk$.~$\qqed$
\end{lemma}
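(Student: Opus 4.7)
The plan is to construct an explicit nullhomotopy of $(\partial D^2)^{\times n}=(S^1)^{\times n}$ inside $\zk$ by contracting the $S^1$ factors one coordinate at a time. The essential ingredient is that every singleton $\{i\}$ is a face of $K$, which is guaranteed by the no-ghost-vertices hypothesis.

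First I would identify $D^2$ with the closed unit disk in $\mathbb{C}$ with basepoint $1\in S^1=\partial D^2$. For each $i\in\{1,\ldots,n\}$, the fact that $\{i\}\in K$ means that the subspace
$$
A_i=(S^1)^{i-1}\times D^2\times (S^1)^{n-i}
$$
is contained in $\zk$, since it is the piece of the polyhedral product corresponding to the vertex $\sigma=\{i\}$.

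Next, I would define a homotopy $H\colon (S^1)^{\times n}\times [0,n]\to \zk$ in $n$ consecutive stages. For $i\in\{1,\ldots,n\}$ and $s\in[0,1]$, set
$$
H\bigl((z_1,\ldots,z_n),\,i-1+s\bigr)=\bigl(1,\ldots,1,\,(1-s)z_i+s,\,z_{i+1},\ldots,z_n\bigr),
$$
with $1$'s placed in the first $i-1$ coordinates. During stage $i$ only the $i$-th coordinate moves, sweeping from $z_i\in S^1$ through $D^2$ and ending at $1$; the earlier coordinates are fixed at the basepoint $1$ and the later ones remain on $S^1$. Hence stage $i$ has image in $A_i\subseteq\zk$, consecutive stages agree at their common endpoint, and after stage $n$ the entire torus has been collapsed to $(1,\ldots,1)$.

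There is no serious obstacle here beyond bookkeeping: one only needs to check that each stage lands in $\zk$ and that the stages concatenate continuously. The key conceptual point is that collapsing one circle at a time uses only a single vertex of $K$ at each step, so no higher-dimensional face of $K$ is required — exactly what the absence of ghost vertices provides.
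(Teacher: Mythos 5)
Your argument is correct: the one-coordinate-at-a-time contraction is well defined, each stage lands in $A_i=(S^1)^{i-1}\times D^2\times(S^1)^{n-i}$ (which lies in $\zk$ precisely because $\{i\}\in K$, i.e.\ no ghost vertices), the stages match at their common endpoints, and convexity of $D^2$ guarantees $(1-s)z_i+s\in D^2$. The paper does not give its own proof but simply cites Corollary~3.3 of~\cite{MR3084441}; your explicit piecewise-linear homotopy is an elementary, self-contained substitute for that citation, and in fact shows the slightly stronger statement that $(S^1)^n$ already contracts inside $\z{\sk{K}{0}}$, the polyhedral product over the $0$-skeleton. The cited reference obtains the result as a byproduct of a more general homotopy decomposition of polyhedral products; your direct construction avoids that machinery entirely, at the (negligible) cost of not yielding the finer decomposition data.
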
 

\begin{lemma}
\label{LSkeleta}
If $0\leq l\leq \dim K$, then $\zsk{K}{\ell}-\zsk{K}{\ell-1}$ is contractible in $\zk$.
\end{lemma}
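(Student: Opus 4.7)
The plan is to decompose $\zsk{K}{\ell}-\zsk{K}{\ell-1}$ into disjoint open pieces, one per $\ell$-simplex of $K$, deform each into $(S^1)^{\times n}=\zsk{K}{-1}$ by a straight-line homotopy, then apply Lemma~\ref{LGT2}.

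For $x=(x_1,\dots,x_n)\in\zk$, let $\sigma(x)=\{i:x_i\in\mathring{D}^2\}$; by the polyhedral product definition $\sigma(x)\in K$, and $x\in\zsk{K}{\ell}$ iff $|\sigma(x)|\leq \ell+1$. So
$$\zsk{K}{\ell}-\zsk{K}{\ell-1}=\bigsqcup_{\sigma\in K,\,|\sigma|=\ell+1}U_\sigma,\qquad U_\sigma=(\mathring{D}^2)^\sigma\times (S^1)^{[n]\setminus\sigma}.$$
I would first observe that each $U_\sigma$ is open in $\zsk{K}{\ell}-\zsk{K}{\ell-1}$: a nearby point $x'$ still satisfies $\sigma\subseteq\sigma(x')$, and the constraint $|\sigma(x')|=\ell+1$ forces $\sigma(x')=\sigma$. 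Hence the $U_\sigma$'s form a clopen partition of $\zsk{K}{\ell}-\zsk{K}{\ell-1}$, and any homotopy defined piecewise on them is automatically continuous.

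On $U_\sigma$, for $t\in[0,\tfrac12]$, I would set
$$H_t(x)_i=\begin{cases}(1-2t)x_i+2t,& i\in\sigma,\\ x_i,& i\notin\sigma.\end{cases}$$
The triangle inequality gives $|(1-2t)x_i+2t|\leq(1-2t)|x_i|+2t<1$ for $t<\tfrac12$ since $|x_i|<1$, so $\sigma(H_t(x))=\sigma\in K$ and $H_t(x)\in\zk$. At $t=\tfrac12$ every $\sigma$-coordinate lands on $1\in S^1$, so $H_{1/2}$ maps $\zsk{K}{\ell}-\zsk{K}{\ell-1}$ into $(S^1)^{\times n}=\zsk{K}{-1}$. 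For $t\in[\tfrac12,1]$, I would postcompose with a nullhomotopy of the inclusion $(S^1)^{\times n}\hookrightarrow\zk$ supplied by Lemma~\ref{LGT2}; concatenating the two stages yields the desired nullhomotopy of $\zsk{K}{\ell}-\zsk{K}{\ell-1}\hookrightarrow\zk$.

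The only things to check are that Phase~1 genuinely stays inside $\zk$ (which reduces to the triangle-inequality bound above and the membership $\sigma\in K$) and that the piecewise formula fits together continuously across different $U_\sigma$'s; both become routine once the clopen decomposition is recognized, so I do not expect a serious obstacle beyond this bookkeeping.
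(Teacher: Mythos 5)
Your proposal is correct and takes essentially the same route as the paper: the same clopen decomposition into pieces $U_\sigma=(\mathring D^2)^\sigma\times(S^1)^{[n]\setminus\sigma}$ indexed by the $\ell$-faces of $K$, a coordinate-wise contraction of each such piece into $(S^1)^{\times n}=\zsk{K}{-1}$, followed by an appeal to Lemma~\ref{LGT2}. The only difference is that you write out the straight-line homotopy explicitly where the paper merely says to contract $\mathring D^2$ to a boundary point in each $\sigma$-coordinate.
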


\begin{proof}
We have a decomposition
$$
\zsk{K}{\ell}-\zsk{K}{\ell-1}=\cdunion{\sigma\in K,\,|\sigma|=\ell+1}{}{\tilde Y_1^\sigma\times\cdots\times \tilde Y_n^\sigma}
$$
where $\tilde Y_i^{\sigma}=D^2-\bd D^2$ if $i\in\sigma$ and $\tilde Y_i^{\sigma}=\bd D^2$ if $i\notin\sigma$. 
This being a disjoint union of open subspaces of $\zsk{K}{l}$, 
each of which can be deformed into $\zsk{K}{-1}$ in $\zk$
by contracting $\tilde Y_i^{\sigma}$ to a point in $\bd D^2$ whenever $i\in\sigma$. 
Thus $\zsk{K}{\ell}-\zsk{K}{\ell-1}$ can also be deformed into $\zsk{K}{-1}$.
Then $\zsk{K}{\ell}-\zsk{K}{\ell-1}$ is contractible in $\zk$ by Lemma~\ref{LGT2}.

\end{proof}

\begin{lemma}
\label{LSkeleta2}
If $-1\leq j\leq \dim K$, then 
$$
\cat(\zk)\leq \cat(\incl{\zsk{K}{j}}{}{\zk})+\dim K-j.
$$
In particular,
$$
\cat(\zk)\leq \dim K+1.
$$
and
$$
\cat(\zk)\leq \cat(\zsk{K}{j})+\dim K-j.
$$
\end{lemma}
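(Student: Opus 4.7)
The plan is to combine the earlier filtration lemma with the contractibility results for skeleta already established in Lemma~\ref{LSkeleta} and Lemma~\ref{LGT2}. The inclusions $\sk{K}{j}\subseteq\sk{K}{j+1}\subseteq\cdots\subseteq\sk{K}{\dim K}=K$ induce a filtration of $CW$-subcomplexes
$$
\zsk{K}{j}\subseteq\zsk{K}{j+1}\subseteq\cdots\subseteq\zsk{K}{\dim K}=\zk
$$
of length $m=\dim K - j$, so the main strategy is simply to check that this filtration satisfies the hypotheses of Lemma~\ref{LFiltration}, and then read off the bound.

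First I would observe that for each $\ell$ with $j\leq \ell\leq \dim K-1$, the difference $\zsk{K}{\ell+1}-\zsk{K}{\ell}$ is contractible in $\zk$: since $0\leq \ell+1\leq \dim K$, this is exactly Lemma~\ref{LSkeleta}. Applying Lemma~\ref{LFiltration} with $X_0=\zsk{K}{j}$ and $X_m=\zk$ then yields
$$
\cat(\zk)\leq \cat\!\paren{\incl{\zsk{K}{j}}{}{\zk}}+\paren{\dim K - j},
$$
which is the main inequality.

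For the ``in particular'' statement, I would specialize to $j=-1$. Here $\zsk{K}{-1}=(\bd D^2)^{\times n}$, and by Lemma~\ref{LGT2} this subcomplex is contractible in $\zk$. Consequently the inclusion $\incl{\zsk{K}{-1}}{}{\zk}$ is covered by the single open set $\zsk{K}{-1}$ itself (after thickening it slightly via Lemma~\ref{LOpen} to obtain an open neighbourhood that still deformation retracts onto it), so $\cat(\incl{\zsk{K}{-1}}{}{\zk})=0$. Substituting into the main inequality gives $\cat(\zk)\leq \dim K+1$.

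There is essentially no serious obstacle here: all of the technical content has been absorbed into Lemma~\ref{LFiltration}, Lemma~\ref{LSkeleta}, and Lemma~\ref{LGT2}. The one small point to be slightly careful about is when $j=\dim K$, where $m=0$ and the filtration is trivial; in that case the inequality reduces to the tautology $\cat(\zk)\leq\cat(\mathrm{id}_{\zk})=\cat(\zk)$, so there is nothing to verify.
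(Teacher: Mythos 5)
Your proof is correct and follows essentially the same route as the paper: apply Lemma~\ref{LFiltration} to the filtration $\zsk{K}{j}\subseteq\cdots\subseteq\zk$ induced by the skeletal filtration of $K$, using Lemma~\ref{LSkeleta} to verify the contractibility hypothesis, and then use Lemma~\ref{LGT2} for the $j=-1$ case. One small remark: the thickening via Lemma~\ref{LOpen} is unnecessary here, since $\cat(f)$ is defined via open covers of the \emph{domain} of $f$, and $\zsk{K}{-1}$ is already open in itself, so nullhomotopy of the inclusion (Lemma~\ref{LGT2}) immediately gives $\cat(\incl{\zsk{K}{-1}}{}{\zk})=0$.
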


\begin{remark}
Lemma~\ref{LSkeleta} and~\ref{LSkeleta2} can be generalised to any filtration 
$L_j\subseteq \cdots\subseteq L_k=K$ satisfying $\bd\sigma\subseteq L_i$ whenever $\sigma\in L_{i+1}$
in place of the skeletal filtration.
\end{remark}

\begin{proof}
The skeletal filtration $\sk{K}{j}\subseteq\cdots\subseteq\sk{K}{\dim K}=K$
induces a filtration of subcomplexes $\zsk{K}{j}\subseteq\cdots\subseteq\zk$, 
and  for $0\leq j\leq \dim K$, $\zsk{K}{j}-\zsk{K}{j-1}$ is contractible in $\zk$ by Lemma~\ref{LSkeleta}.
The result then follows using Lemma~\ref{LFiltration}. In particular, when $j=-1$, 
we get $\cat(\zk)\leq \dim K+1$ since $\cat(\incl{\zsk{K}{-1}}{}{\zk})=0$ by Lemma~\ref{LGT2}.
The last bound follows from equation~\ref{Efcat}.
\end{proof}

\begin{proposition}
\label{PCoordSusp}
Consider the sequences of pairs of spaces $\mc S:=((X_1,A_1),\ldots,(X_n,A_n))$ 
and $\mc T:=((\Sigma^{m_1}X_1,\Sigma^{m_1} A_1),\ldots,(\Sigma^{m_n} X_n,\Sigma^{m_n} A_n))$
for some integers $m_i$ and connected basepointed $X_i$. Then for any $K$ with no ghost vertices,
$$
\cat(\mc T^K)\leq \cat(\mc S^K).
$$
\end{proposition}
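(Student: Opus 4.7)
The plan is to induct on $\sum_{i=1}^n m_i$; the base case $\sum m_i=0$ is trivial, and for the inductive step it suffices to consider $m_1=1$ and $m_j=0$ for $j\geq 2$. So I need only show $\cat(\mc T^K)\leq\cat(\mc S^K)$ when $\mc T$ differs from $\mc S$ only by replacing $(X_1,A_1)$ with the reduced suspension pair $(\Sigma X_1,\Sigma A_1)$.

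Decompose the reduced suspension as $\Sigma X_1 = C_+X_1\cup_{X_1} C_-X_1$ and $\Sigma A_1 = C_+A_1\cup_{A_1} C_-A_1$, where $C_\pm$ denotes the upper and lower cone. This yields a decomposition $\mc T^K = T_+\cup T_-$ with $T_\pm = ((C_\pm X_1, C_\pm A_1),(X_2,A_2),\ldots,(X_n,A_n))^K$ and $T_+\cap T_- = \mc S^K$ on the nose. Contracting each cone $C_\pm X_1$ onto its tip---which, in the reduced suspension, coincides with the common basepoint $\ast$---deformation retracts the pair $(C_\pm X_1, C_\pm A_1)$ onto $(\ast,\ast)$, inducing a deformation retract $T_\pm \simeq \mc S'^{K_{\{2,\ldots,n\}}}$, where $\mc S'=((X_2,A_2),\ldots,(X_n,A_n))$ and $K_{\{2,\ldots,n\}}$ denotes the full subcomplex of $K$ on the remaining vertices. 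The projection $\pi\colon\mc S^K\to\mc S'^{K_{\{2,\ldots,n\}}}$ forgetting the first coordinate admits a section $(y_2,\ldots,y_n)\mapsto(\ast,y_2,\ldots,y_n)$ via the basepoint $\ast\in A_1$, so $\mc S'^{K_{\{2,\ldots,n\}}}$ is a retract of $\mc S^K$ and $\cat(\mc S'^{K_{\{2,\ldots,n\}}})\leq\cat(\mc S^K)$.

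Given a categorical cover $\{W_0,\ldots,W_j\}$ of $\mc S'^{K_{\{2,\ldots,n\}}}$ with $j=\cat(\mc S'^{K_{\{2,\ldots,n\}}})$, I build a categorical cover of $\mc T^K$ of size $j+1$ by taking, for each $i$, an open thickening $\tilde U_i\subseteq\mc T^K$ of $\pi^{-1}(W_i)\cap(T_+\cup T_-)$ obtained by replacing the closed cones $C_\pm X_1$ with slightly enlarged open cones in $\Sigma X_1$. Each $\tilde U_i$ is contractible in $\mc T^K$ via a two-stage nullhomotopy: first contract along the cone direction to slide the first coordinate to $\ast$, then apply a nullhomotopy of $W_i$ in $\mc S'^{K_{\{2,\ldots,n\}}}$. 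This yields $\cat(\mc T^K)\leq j\leq\cat(\mc S^K)$, completing the inductive step.

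The main technical obstacle is arranging that the two cone contractions---one on the $T_+$ part and one on the $T_-$ part of $\tilde U_i$---piece together continuously on their common overlap in $\mc S^K$. I resolve this by choosing both contractions to push the first coordinate ``upward'' through the upper cone $C_+X_1$ to $\ast$; for points originating in $T_-$, this means first passing through the equator $X_1$ and then into $C_+X_1$. Both sides then use the same upward slide on the equatorial overlap, and the subsequent $W_i$-contractions agree on the nose. Verifying that this glued homotopy remains inside $\mc T^K$ throughout---respecting the polyhedral product conditions and the inclusion $\Sigma A_1\subseteq\Sigma X_1$ at every time---is the crucial check, but it follows from a direct computation using the cylinder parametrization $\Sigma X_1 = X_1\times[0,1]/{\sim}\,$.
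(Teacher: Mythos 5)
Your reduction to a single suspension on one coordinate is fine, and the decomposition $\mc T^K = T_+\cup T_-$ with intersection $\mc S^K$ is correct, but the argument breaks at the crucial claim: you are effectively asserting $\cat(\mc T^K)\leq\cat(\mc S'^{K_{\{2,\ldots,n\}}})$, which is false. Take $K=\Delta^0$ on the single vertex $\{1\}$, so $n=1$, $K_{\{2,\ldots,n\}}$ is the empty complex on no vertices, and $\mc S'^{K_{\{2,\ldots,n\}}}=\ast$. Your bound would force $\cat(\Sigma X_1)\leq\cat(\ast)=0$, i.e.\ $\Sigma X_1$ contractible, which already fails for $X_1=S^1$. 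The concrete error lies in the ``two-stage nullhomotopy'': the slide of the first coordinate upward through the equator is not a well-defined continuous homotopy of $\Sigma X_1$. Writing $\Sigma X_1=(X_1\times[-1,1])/\!\sim$, the lower tip is a single point represented by $(x,-1)$ for every $x\in X_1$; pushing to $(x,(1-\tau)(-1)+\tau)$ for $\tau\in(0,1)$ separates these into distinct points, so the would-be homotopy is discontinuous exactly where $T_-$ meets the tip. (And any continuous family of self-maps of $\Sigma X_1$ starting at the identity and ending at the constant map to the top tip would be a contraction of $\Sigma X_1$, which cannot exist in general.) The gluing-at-the-equator issue you flagged as ``the main technical obstacle'' is not the real problem; the real problem is the behaviour at the cone points.

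The correct move (and the one the paper makes) is to start from a categorical cover of $\mc S^K$ itself, not of the smaller $\mc S'^{K_{\{2,\ldots,n\}}}$, and to lift each open set $U$ of $\mc S^K$ to the open set of $\mc T^K$ consisting of all points whose suspension representatives have $X_i$-components lying in $U$. Each lifted set is then contracted inside $\mc T^K$ by first running the given contraction of $U$ in $\mc S^K$ (on the $X_i$-coordinates, which is compatible with the suspension identifications because boundary points of the disc coordinates collapse regardless) and then sliding the resulting point to $(\ast,\ldots,\ast)$, using that in a reduced suspension every $(t,\ast)$ is already the basepoint. This gives a cover of $\mc T^K$ of the same size as the one for $\mc S^K$, yielding $\cat(\mc T^K)\leq\cat(\mc S^K)$ with no appeal to the sub-polyhedral-product $\mc S'^{K_{\{2,\ldots,n\}}}$, whose category can be strictly smaller and cannot control $\cat(\mc T^K)$.
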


\begin{proof}
Let $K$ be on vertex set $[n]$, $k:=\cat(\mc S^K)$, and take a categorical cover $\{U_0,\ldots,U_k\}$ of $\mc S^K$.
For any open subset $V$ of $\mc S^K$, define the following open subset $V^1$ of $\mc T^K$
$$
V^1 \,:\,= \cset{((t_1,x_1),\ldots,(t_n,x_n))\in \prod_{i=1}^n \Sigma^{m_i}X_i}{(x_1,\ldots,x_n)\in V,\,t_i\in D^{m_i}}.
$$
In particular, $\mc T^K=(\mc S^K)^1$. Then $\{U^1_0,\ldots,U^1_k\}$ is an open cover of $\mc T^K$. 
Since $K$ has no ghost vertices, $A_i\subseteq X_i$, and each $X_i$ is path-connected, then $\mc S^K$ is path-connected. 
Since $\Sigma^{m_i}X_i$ is the reduced suspension of the basepointed space $X_i$, 
we have identifications $(t,\ast)\sim\ast\in\Sigma^{m_i}X_i$. 
Then we can define a contraction of $U^1_i$ in $\mc T^K$ by contracting $U_i$ in $\mc S^K$ to a point $p$ 
and homotoping $p$ to the basepoint $(\ast,\ldots,\ast)\in\mc S^K$.
Therefore, $\{U^1_0,\ldots,U^1_k\}$ is a categorical cover of $\mc T^K$. 

\end{proof}

Notice that the $(i+1)$-skeleton $\sk{(\rzk)}{i+1}$ of $\rzk$ is equal to $\rzsk{K}{i}$
although this is not true for the complex moment-angle complex $\zk$.

\begin{corollary}
\label{CRealMAC}
For any $K$ with no ghost vertices,
\begin{equation}
\label{mac}
\cat(\zk)\leq \cat(\rzk)
\end{equation}
and if $\rzk$ is not contractible and $i\geq 0$, then
\begin{equation}
\label{RmacCmac}
\cat(\zsk{K}{i})\leq \cat(\rzsk{K}{i})\leq \cat(\rzk).
\end{equation}
\end{corollary}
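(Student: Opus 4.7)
The corollary packages three inequalities. The first, $\cat(\zk)\leq\cat(\rzk)$, and the first half of~(\ref{RmacCmac}), $\cat(\zsk{K}{i})\leq\cat(\rzsk{K}{i})$, are formal consequences of Proposition~\ref{PCoordSusp}. For the first, take the constant sequence $\mc S=((D^1,S^0),\ldots,(D^1,S^0))$ and set $m_i=1$ for every $i$, so that $\mc T=((D^2,S^1),\ldots,(D^2,S^1))$ and hence $\mc S^K=\rzk$ and $\mc T^K=\zk$; the hypotheses are satisfied since $D^1$ is path-connected and basepointed and $K$ has no ghost vertices. For the second, repeat the argument with $K$ replaced by $\sk{K}{i}$, which for $i\geq 0$ retains all vertices of $K$ and hence remains ghost-vertex-free.

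The substantive inequality is $\cat(\rzsk{K}{i})\leq\cat(\rzk)$. The crucial input is the identification $\sk{(\rzk)}{i+1}=\rzsk{K}{i}$ noted just before the corollary, which realises $\rzsk{K}{i}$ as a CW-skeleton of $\rzk$; the problem thus reduces to the general fact that for a non-contractible CW complex $X$ (so $\cat(X)\geq 1$), every skeleton $\sk{X}{n}$ satisfies $\cat(\sk Xn)\leq\cat(X)$. My strategy is to start from a categorical cover $\{U_0,\ldots,U_k\}$ of $\rzk$ with $k\geq 1$, refine each $U_j$ (using Lemma~\ref{LOpen} together with a standard CW thickening) to an open neighborhood of a contractible-in-$\rzk$ subcomplex $\bar U_j$, and set $V_j=\bar U_j\cap\rzsk{K}{i}$. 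Each $V_j$ is a subcomplex of $\rzsk{K}{i}$ of dimension at most $i+1$, contractible in $\rzk$ via the restriction of the nullhomotopy of $\bar U_j$; thickening back via Lemma~\ref{LOpen} produces a candidate open cover of $\rzsk{K}{i}$ whose members are contractible in $\rzk$.

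\textbf{Main obstacle.} The hard step is upgrading \emph{contractible in $\rzk$} to \emph{contractible in $\rzsk{K}{i}$}. Cellular approximation of each nullhomotopy $V_j\times I\to\rzk$ rel $V_j\times\partial I$ deforms it into $\sk{(\rzk)}{i+2}=\rzsk{K}{i+1}$, one skeleton higher than required. This off-by-one gap is essential: for $K=\Delta^2$ one has $\rzk=I^3$ contractible while $\rzsk{K}{0}$ is the $1$-skeleton of a cube and not contractible, so some form of the non-contractibility hypothesis must genuinely enter. A natural tactic is to exploit the availability of at least two members of the cover (since $k\geq 1$) to redistribute or absorb the excess $(i+2)$-cells between the $\bar U_j$'s. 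Alternatively, one may pass to Ganea's characterisation of category: cellular approximation of a factorisation $\psi\colon\rzk\to FW_{k+1}(\rzk)$ of the diagonal lands on $\rzsk{K}{i}$ inside $\sk{FW_{k+1}(\rzk)}{i+1}\subseteq FW_{k+1}(\rzsk{K}{i})$, reducing the question to whether the homotopy $\iota\circ\psi\simeq\Delta$ can itself be restricted to $(\rzsk{K}{i})^{k+1}$ --- and it is here that the cubical cell structure of $\rzk$ and the assumption $k\geq 1$ must do the real work.
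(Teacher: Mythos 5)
Your treatment of~(\ref{mac}) and the first inequality of~(\ref{RmacCmac}) via Proposition~\ref{PCoordSusp} is correct and matches the paper's argument exactly. You also correctly identify the crux: by the observation $\sk{(\rzk)}{i+1}=\rzsk{K}{i}$, the remaining inequality $\cat(\rzsk{K}{i})\leq\cat(\rzk)$ reduces to the general statement that $\cat(\sk{X}{n})\leq\cat(X)$ for any connected non-contractible CW-complex $X$. Your diagnosis of the off-by-one obstruction under cellular approximation, and the example $K=\Delta^2$ showing that non-contractibility must really be used, are both on point.

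However, this is where your argument stops: you flag that ``the cubical cell structure of $\rzk$ and the assumption $k\geq 1$ must do the real work'' without actually doing that work, so the third inequality is not proved. The missing ingredient is that the statement $\cat(\sk{X}{n})\leq\cat(X)$ (for connected non-contractible $X$) is not something to be re-derived ad hoc here; it is a known theorem, and the paper simply invokes it (the main corollary of Theorem~1 in the cited reference). That theorem is itself non-trivial --- precisely because of the off-by-one issue you noticed --- and its proof proceeds along lines closer to your Ganea/fat-wedge sketch than to the open-cover refinement; but either way, the corollary as stated requires this external input, and without citing or reproving it your argument has a genuine gap.
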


\begin{proof}
Inequality~\eqref{mac} and the first inequality in~\eqref{RmacCmac} follow from Proposition~\ref{PCoordSusp}.  
By the main corollary of Theorem~$1$ in~\cite{MR1933583},
the $i$-skeleton $\sk{X}{i}$ of any connected non-contractible $CW$-complex $X$
satisfies that $\cat(\sk{X}{i})\leq\cat(X)$. 
Since $\sk{(\rzk)}{i+1}=\rzsk{K}{i}$ holds for real moment-angle complexes, the last inequality follows.

\end{proof}

It is plausible that the second bound can be strengthened to $\cat(\zsk{K}{i})\leq \cat(\zk)$.
In any case, even if it is true, we will sometimes need a sharper bound.

Let $X$ and $Y$ be path-connected paracompact spaces, and $\mc U:=\{U_0,\ldots,U_k\}$ and $\mc V:=\{V_0,\ldots,V_{\ell}\}$ 
be categorical covers of $X$ and $Y$, respectively. We recall James' construction of a categorical cover $\mc W:=\{W_0,\ldots,W_{k+\ell}\}$
of $X\times Y$ from the covers $\mc U$ and $\mc V$ (see~\cite{MR516214}, page $333$). 

Let $\{\pi_j\}_{j\in\{0,\dots,k\}}$ be a partition of unity subordinate to the cover $\mc U$. 
For any subset $S\subseteq \{0,\dots,k\}$, define 
$$
W_{\mc U}(S)\,:=\,\cset{x\in X}{\pi_j(x)>\pi_i(x)\mbox{ for any }j\in S\mbox{ and }i\notin S},
$$ 
and for any point $p\in X$, let 
$$
S_{\mc U}(p)\,:=\,\cset{j\in \{0,\dots,k\}}{\pi_j(p)>0}.
$$
Since the context is clear, let $W(S):=W_{\mc U}(S)$ and $S(p)=S_{\mc U}(p)$.
Then $W(S)$ is an open subset of $X$. 
Given $x\in X$, $x\in W(S)$ where $S=\cset{i}{\,\pi_i(x)=\max\{\pi_1(x),\ldots,\pi_k(x)\}\,}$), we have $X=\bigcup_{S\subseteq\{0,\dots,k\}}W(S)$.
Moreover, $W(S')\cap W(S)=\emptyset$ when $S\nsubseteq S'$ and $S'\nsubseteq S$, in particular, when $|S|=|S'|$ and $S\neq S'$, and $W(S)\subseteq U_j$
whenever $j\in S$. 
Therefore $W(S)$ is contractible in $X$. Then so is the disjoint union of open sets
\begin{equation}
\label{EUi}
U'_i\,:=\,\cdunionmulti{S=S(p)\mbox{\tiny{ for some} }p\in X}{|S|=i+1}{}{W(S)}.
\end{equation}
Since $W(S)=\emptyset$ when $S\neq S(p)$ for every $p\in X$,
the set $\{U'_0,\ldots,U'_k\}$ forms a categorical cover of $X$.
We obtain a categorical cover $\{V'_0,\ldots,V'_\ell\}$ of $Y$ from $\mc V$ by an analogous construction.

Now let $\bar U_i:=U'_{k-i}\cup\cdots\cup U'_k$ and $\bar V_j:=V'_{\ell-j}\cup\cdots\cup V'_\ell$,
and for $-1\leq s\leq k+\ell$, let $C_{-1}:=\emptyset$ and
$$
C_s:=\cunionmulti{i+j=s}{i\leq k,\,j\leq\ell}{}{\bar U_i\times\bar V_j}.
$$
Take $W_s:=C_s-C_{s-1}$. Notice that
\begin{equation}
\label{EWs}
W_s=\cdunionmulti{i+j=s}{i\leq k,\,j\leq\ell}{}{U'_i\times V'_j}.
\end{equation}
This defines a categorical cover $\mc W$ of $X\times Y$.

Given subcomplexes $B\subseteq Y$ and $A\subseteq X$, consider the polyhedral product 
$$
\mc X^{S^0}:=X\times B \cup_{A\times B} A\times Y
$$
over the sequence $\mc X:=((X,A),(Y,B))$, where $S^0$ is considered as a simplicial complex of two disjoint points.

\begin{lemma}
\label{LPoly}
If $X-A$ is contractible in $X$ and $Y-B$ is contractible in $Y$, then
$$
\cat(\mc X^{S^0})\leq \cat(A)+\cat(B)+1.
$$
\end{lemma}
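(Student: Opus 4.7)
The plan is to build a categorical cover of $\mc X^{S^0}$ of cardinality $\cat(A)+\cat(B)+2$ by extending a James-type product cover of the subcomplex $A\times B$ and adjoining one further open set for the complement. Writing $k=\cat(A)$ and $\ell=\cat(B)$, I would start from categorical covers $\{U_0,\ldots,U_k\}$ of $A$ and $\{V_0,\ldots,V_\ell\}$ of $B$ and feed them into the James product construction recalled immediately above the lemma, obtaining open sets $W_0,\ldots,W_{k+\ell}$ covering $A\times B$ with each $W_s$ contractible in $A\times B$. Since $A$ and $B$ are subcomplexes of $X$ and $Y$, the product $A\times B$ is a subcomplex of the $CW$-complex $\mc X^{S^0}$, so Lemma~\ref{LOpen} applies with $\mc X^{S^0}$ as the ambient complex to extend each $W_s$ to an open subset $\bar W_s\subseteq\mc X^{S^0}$ that deformation retracts onto $W_s$. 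Composing this retraction with the contraction of $W_s$ in $A\times B\subseteq\mc X^{S^0}$ shows that each $\bar W_s$ is itself contractible in $\mc X^{S^0}$.

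The remaining open set will be $V=\mc X^{S^0}-(A\times B)=((X-A)\times B)\sqcup(A\times(Y-B))$, which is open in $\mc X^{S^0}$ because $A\times B$ is closed. To see $V$ is contractible in $\mc X^{S^0}$, on the piece $(X-A)\times B$ I would apply the nullhomotopy of the inclusion $X-A\hookrightarrow X$ only to the first coordinate; this keeps the trajectory inside $X\times B\subseteq\mc X^{S^0}$ throughout and deforms $(X-A)\times B$ onto $\{x_0\}\times B$ for some $x_0\in X$. Path-connectedness of $\mc X^{S^0}$ then slides $x_0$ into a basepoint $a_0\in A$, after which $B$ contracts within $A\times B$. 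The symmetric argument handles $A\times(Y-B)$, and both contractions can be chosen to terminate at a common basepoint, giving a single contraction of $V$ in $\mc X^{S^0}$. Thus $\{\bar W_0,\ldots,\bar W_{k+\ell},V\}$ is a categorical cover of $\mc X^{S^0}$ of cardinality $k+\ell+2$, yielding $\cat(\mc X^{S^0})\leq\cat(A)+\cat(B)+1$.

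The main point requiring care is verifying that each of these homotopies genuinely stays inside $\mc X^{S^0}$ rather than drifting into the ambient $X\times Y$; the key observation is that deforming a first coordinate in $X$ while keeping the second coordinate in $B$ remains in $X\times B$, and symmetrically. A minor secondary issue is the path-connectedness of $\mc X^{S^0}$, which is needed to merge the two pieces of the contraction of $V$ onto a single point, but this is a routine consequence of the standing assumptions on the polyhedral product.
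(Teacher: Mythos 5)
Your cover has the right cardinality, and extending the James cover of $A\times B$ to open neighbourhoods in $\mc X^{S^0}$ via Lemma~\ref{LOpen} is sound. The gap is in the final open set $V=\mc X^{S^0}-(A\times B)=((X-A)\times B)\sqcup(A\times(Y-B))$: it is not contractible in $\mc X^{S^0}$ in general, and the step you pass over lightly is exactly where the argument breaks. After deforming $(X-A)\times B$ onto $\{a_0\}\times B$ with $a_0\in A$, you assert that \emph{``$B$ contracts within $A\times B$''}. Nothing in the hypotheses supports this: the assumption is that $X-A$ is contractible in $X$ and $Y-B$ is contractible in $Y$, which says nothing about $B$ being nullhomotopic in $Y$, in $A\times B$, or in $\mc X^{S^0}$. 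Concretely, take $X=Y=S^1\vee S^1$ with $A\subset X$ and $B\subset Y$ each one of the two wedge circles; then $X-A$ and $Y-B$ are open arcs and the hypotheses hold, yet a van Kampen computation for $\mc X^{S^0}=X\times B\cup_{A\times B}A\times Y$ (an amalgamated free product over $\pi_1(A\times B)\cong\mb Z^2$, with both inclusions injective) shows $\{a_0\}\times B$ represents a nontrivial class in $\pi_1(\mc X^{S^0})$. So $(X-A)\times B$, and hence $V$, is not contractible in $\mc X^{S^0}$, and your proposed cover is not categorical.

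The paper's proof avoids precisely this by never handling the complement as a single open set. It first extends the covers of $A$ and $B$ to covers $\{U_0,\ldots,U_k\}$ of $X$ and $\{V_0,\ldots,V_\ell\}$ of $Y$ with $U_0=X-A$ and $V_0=Y-B$, applies the James construction to these covers of $X$ and $Y$ rather than of $A$ and $B$, and only then intersects with $\mc X^{S^0}=X\times Y-U_0\times V_0$. The crucial consequence is that any resulting piece lying over $X-A$ already has its second factor confined to a single categorical piece $S_j$ of $B$ (and dually over $Y-B$), which does contract inside $A\times B$; meanwhile the top piece $W_{k+\ell}$ lies entirely in $U_0\times V_0$ and vanishes after the subtraction, so the count still closes to $\cat(A)+\cat(B)+1$. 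Your $V$ carries the whole of $B$ (respectively $A$) as the second (respectively first) factor, which is exactly what cannot be contracted. Repairing your argument by subdividing $V$ according to the covers of $A$ and $B$ is possible in principle, but then the set count needs care and doing it correctly essentially reconstructs the James-on-$X\times Y$ argument the paper uses.
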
 

\begin{proof}
Suppose we have categorical covers $\{R_1,\ldots,R_k\}$ and $\{S_1,\ldots,S_\ell\}$
of $A$, and $B$. By Lemma~\ref{LOpen}, we have open subsets $U_i\subseteq X$ and $V_i\subseteq Y$
such that $U_i$ and $V_i$ deformation retract onto $R_i$ and $S_i$ respectively, 
and $U_i\cap A=R_i$ and $V_i\cap B=S_i$ for $i\geq 1$. Then taking $U_0:=X-A$ and $V_0:=Y-B$, 
$\mc U:=\{U_0,\ldots,U_k\}$ and $\mc V:=\{V_0,\ldots,V_\ell\}$ are categorical covers of $X$ and $Y$.

Notice that
$$
X\times Y-U_0\times V_0=\mc X^{S^0},
$$
and since $R_i=U_i\cap A=U_i-U_0$ and $S_j=V_j\cap B=V_j-V_0$ for $i,j\geq 1$, 
\begin{equation}
\label{EDiff}
D_{i,j}\,:=\,U_i\times V_j-U_0\times V_0\,=\,(R_i\times V_j)\cup_{R_i\times S_j}(U_i\times S_j). 
\end{equation}
Notice that $D_{i,j}$ is contractible in $\mc X^{S^0}$ by deformation retracting the factor
$U_i$ onto $R_i$ and $V_j$ onto $S_j$,
then contracting $R_i\times S_j$ in $A\times B$.

Take the categorical cover $\mc W:=\{W_0,\ldots,W_{k+\ell}\}$ of $X\times Y$ constructed from 
$\mc U$ and $\mc V$ as above. By~\eqref{EWs},
\begin{align*}
W_s-U_0\times V_0 &=\cdunionmulti{i+j=s}{i\leq k,\,j\leq\ell}{}{(U'_i\times V'_j-U_0\times V_0)},
\end{align*}
and by~\eqref{EUi},
\begin{align*}
U'_i\times V'_j-U_0\times V_0=\cdunion
{\indexsize{\begin{matrix}
\scriptstyle S=S_{\mc U}(p)\mbox{\tiny{ for some} }p\in X \cr 
\scriptstyle T=S_{\mc V}(q)\mbox{\tiny{ for some} }q\in Y \cr 
\scriptstyle |S|=i+1,\,|T|=j+1
\end{matrix}}}
{}{(W_{\mc U}(S)\times W_{\mc V}(T)-U_0\times V_0)}.
\end{align*}
These are disjoint unions of open subsets of $\mc X^{S^0}$.
Since $W_{\mc U}(S)$ is contained in some $U_{i'}$ and
$W_{\mc V}(T)$ is contained in some $V_{j'}$, it follows that
$(W_{\mc U}(S)\times W_{\mc V}(T)-U_0\times V_0)$ is contained in $D_{i',j'}$, 
so it is contractible in $\mc X^{S^0}$. 
Therefore, 
so are the disjoint unions $U'_i\times V'_j-U_0\times V_0$ and $W_s-U_0\times V_0$. 
Moreover, since $W_{k+\ell}=U'_k\times V'_\ell$,
and $U'_k=W_{\mc U}(\{0,\ldots,k\})$ and $V'_\ell=W_{\mc V}(\{0,\ldots,\ell\})$ 
are contained in $U_{i'}$ and $V_{j'}$ respectively for each $i'\in\{0,\ldots,k\}$ and $j'\in\{0,\ldots,\ell\}$,
$W_{k+\ell}-U_0\times V_0=\emptyset$. Then
$$
\{(W_0-U_0\times V_0),\ldots,(W_{k+\ell-1}-U_0\times V_0)\}
$$
is a categorical cover of $\mc X^{S^0}$.

\end{proof}

\begin{corollary}
\label{CJoinSkeleton}
Let $K$ and $L$ be simplicial complexes with $d:=\dim K$ and $d':=\dim L$. Then
$$
\cat(\zsk{(K\ast L)}{d+d'})\leq \cat(\zsk{K}{d-1})+\cat(\zsk{L}{d'-1})+1.
$$
\end{corollary}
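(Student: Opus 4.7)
The plan is to realise $\zsk{(K\ast L)}{d+d'}$ as a polyhedral product of the form $\mc X^{S^0}$ and then apply Lemma~\ref{LPoly} directly.

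First I would establish the combinatorial identity
$$
\sk{(K\ast L)}{d+d'} \;=\; \bigl(\sk{K}{d-1}\ast L\bigr)\,\cup\,\bigl(K\ast\sk{L}{d'-1}\bigr).
$$
This is a simple dimension count: a simplex $\sigma\sqcup\tau$ of $K\ast L$ with $\sigma\in K$ and $\tau\in L$ has dimension $\dim\sigma+\dim\tau+1$, so it lies in the $(d+d')$-skeleton of $K\ast L$ precisely when either $\dim\sigma\leq d-1$ or $\dim\tau\leq d'-1$. The intersection of the two pieces is $\sk{K}{d-1}\ast\sk{L}{d'-1}$.

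Next, since polyhedral products turn unions of simplicial complexes (on a common vertex set) into unions of subcomplexes of the ambient product, and since $\z{K'\ast L'}\cong \z{K'}\times\z{L'}$, the identity above translates to
$$
\zsk{(K\ast L)}{d+d'} \;=\; \bigl(\zsk{K}{d-1}\times\zl\bigr)\,\cup_{\zsk{K}{d-1}\times\zsk{L}{d'-1}}\,\bigl(\zk\times\zsk{L}{d'-1}\bigr).
$$
This is exactly the polyhedral product $\mc X^{S^0}$ associated to the sequence
$\mc X=\bigl((\zk,\zsk{K}{d-1}),(\zl,\zsk{L}{d'-1})\bigr)$.

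To invoke Lemma~\ref{LPoly}, I still need to verify that $\zk-\zsk{K}{d-1}$ is contractible in $\zk$, and likewise for $L$. But $\zk-\zsk{K}{d-1}=\zsk{K}{d}-\zsk{K}{d-1}$ since $d=\dim K$, and this is contractible in $\zk$ by Lemma~\ref{LSkeleta} applied with $\ell=d$; the analogous fact holds for $L$. Lemma~\ref{LPoly} then yields
$$
\cat\bigl(\zsk{(K\ast L)}{d+d'}\bigr)\;\leq\;\cat(\zsk{K}{d-1})+\cat(\zsk{L}{d'-1})+1,
$$
as required. The only point that requires any care is the skeletal identity for joins, which is purely combinatorial; once this is in place the result is essentially a packaging of Lemma~\ref{LSkeleta} and Lemma~\ref{LPoly}.
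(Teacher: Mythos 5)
Your proposal is correct and follows essentially the same route as the paper: decompose the $(d+d')$-skeleton of $K\ast L$ as $(K\ast\sk{L}{d'-1})\cup_{\sk{K}{d-1}\ast\sk{L}{d'-1}}(\sk{K}{d-1}\ast L)$, transfer this to moment-angle complexes via $\z{K\ast L}\cong\zk\times\zl$, check contractibility of the complements using Lemma~\ref{LSkeleta}, and conclude by Lemma~\ref{LPoly}. Your write-up only adds explicit detail (the dimension count and the identification with $\mc X^{S^0}$) that the paper leaves implicit.
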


\begin{proof}
Recall that $\dim K\ast L=d+d'+1$. Notice that $$\sk{(K\ast L)}{d+d'}=(K\ast\sk{L}{d'-1})\cup_{(\sk{K}{d-1}\ast\sk{L}{d'-1})} (\sk{K}{d-1}\ast L),$$ 
$$\z{K\ast L}=\zk\times\zl, \text{ so}$$
\begin{align*}
\zsk{(K\ast L)}{d+d'} &= (\z{K\ast\sk{L}{d'-1}}) \cup_{\z{(\sk{K}{d-1}\ast\sk{L}{d'-1})}} (\z{\sk{K}{d-1}\ast L})\\
&= (\zk\times\zsk{L}{d'-1})\cup_{\zsk{K}{d-1}\times\zsk{L}{d'-1}} (\zsk{K}{d-1}\times\zl),
\end{align*}
and $\zk-\zsk{K}{d-1}$ and $\zl-\zsk{L}{d'-1}$ are contractible in $\zk$ and $\zl$ by Lemma~\ref{LSkeleta}.
The result follows by Lemma~\ref{LPoly}.

\end{proof}

\subsection{Missing face complexes}

Take $K$ on vertex set $[n]$. We fix the basepoint in the unreduced suspension $\Sigma|K|=(|K|\times [0,1]) /\sim$ 
to be the tip of the double cone corresponding to $1$ under the identifications $(x,0)\sim 0$ and $(x,1)\sim 1$.
Let $MF(K):=\cset{\sigma\subseteq[n]}{\sigma\notin K,\,\bd\sigma\subseteq K}$ be the collection of minimal missing faces of $K$.
With this we can define a large class of category $1$ moment-angle complexes that include those over chordal graphs
(this is a somewhat more flexible alternative to the \emph{directed missing face complexes} defined in~\cite{MR3507473}).
By filtering through skeleta as in the previous section, tight upper bounds can sometimes be obtained when the 
category of the moment angle complex over the $1$-skeleton is known to be small.

\begin{definition}
A simplicial complex $K$ on vertex set $[n]$ is called a \emph{homology missing face complex} (or \emph{HMF-complex}) if for each non-empty 
$I\subseteq [n]$, $K_I$ is a simplex or there exists a subcollection $\mc C_I\subseteq MF(K_I)$ such that the wedge sum of suspended inclusions
$$
\gamma_I\wcolon\seqm{\cvee{\sigma\in \mc C_I}{}{\Sigma|\bd\sigma|}}{}{\Sigma|K_I|}
$$
induces an isomorphism on homology. Consequently $\gamma_I$
 is a homotopy equivalence since it is a map between suspensions.
\end{definition}

\begin{bigremark}
Given $H_*(K_I)$ is torsion-free, since each $\Sigma|\bd\sigma|$ is a sphere, 
one needs only to find $\gamma_I$ that induces surjection on homology in order for $K$ to be an $HMF$-complex.
\end{bigremark}

\begin{proposition}
\label{PExtractible}
If $K$ is an $HMF$-complex, then $\zk$ is homotopy equivalent to a wedge of spheres or is contractible.
Therefore $\cat(\zk)\leq 1$ and $\Cat(\zk)\leq 1$.
\end{proposition}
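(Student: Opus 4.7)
The plan is to show $\zk$ is homotopy equivalent to a wedge of simply-connected spheres (or to a point when $K$ is a simplex, in which case $MF(K)=\emptyset$). The conclusions then follow: a wedge of spheres (or a point) admits a categorical cover by two open sets each contractible in itself---a contractible neighborhood of the basepoint together with its complement, which deformation retracts onto the wedge summands---so $\cat(\zk)\leq 1$ and $\Cat(\zk)\leq 1$.

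First, $\zk$ is simply-connected: in its standard CW-structure every $1$-cell arises from an $S^1$-factor at some vertex $\{i\}\in K$ (which lies in $K$ since $K$ has no ghost vertices) and is nullhomotoped by the adjacent $D^2$-cell. Next, Hochster's theorem~\eqref{EHochster} combined with the HMF hypothesis gives
\[
\tilde H^*(\zk)\cong\bigoplus_{\emptyset\neq I\subseteq[n]}\tilde H^*\bigl(\Sigma^{|I|+1}|K_I|\bigr),
\]
and each $\Sigma|K_I|\simeq\bigvee_{\sigma\in\mathcal{C}_I}\Sigma|\bd\sigma|=\bigvee_{\sigma\in\mathcal{C}_I}S^{|\sigma|-1}$ via $\gamma_I$. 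Thus $\tilde H^*(\zk)$ is free abelian, concentrated in degrees $|I|+|\sigma|-1$ indexed by pairs $(I,\sigma)$ with $\sigma\in\mathcal{C}_I$.

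The main step is to produce, for each such $(I,\sigma)$, a map $f_{I,\sigma}\colon S^{|I|+|\sigma|-1}\to\zk$ representing the corresponding cohomology generator. The inclusion $\bd\sigma\subseteq K_I\subseteq K$ (viewed on vertex set $I$ with ghost vertices at $I\setminus\sigma$) yields a subspace $S^{2|\sigma|-1}\times(S^1)^{|I|-|\sigma|}\cong\mathcal{Z}_{\bd\sigma}\hookrightarrow\zk$, from whose top cell $f_{I,\sigma}$ is extracted by a pinch-type construction. Tracing the Koszul-to-cellular quasi-isomorphism underlying Hochster's theorem together with $\gamma_I^{-1}$ confirms that $f_{I,\sigma}$ hits the correct class. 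Wedging yields $f\colon W=\bigvee_{(I,\sigma)}S^{|I|+|\sigma|-1}\to\zk$, which induces an isomorphism on $\tilde H^*$. Since both $W$ and $\zk$ are simply-connected CW-complexes, Whitehead's theorem upgrades $f$ to a homotopy equivalence. When $K$ is a simplex, $W=\ast$ and $\zk\simeq\ast$.

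The delicate point is the explicit identification of $f_{I,\sigma}$ with the Hochster class: while ranks and degrees automatically match, verifying that $f^*$ actually hits every generator requires careful coordination of the Koszul quasi-isomorphism with the form of $\gamma_I$. An alternative route uses a BBCG-type suspension splitting $\Sigma\zk\simeq\bigvee_{\emptyset\neq I}\Sigma^{|I|+2}|K_I|$, making it immediate that $\Sigma\zk$ is a wedge of spheres under HMF; but desuspending back to $\zk$ itself is nontrivial (since $\Sigma X\simeq\Sigma Y$ need not imply $X\simeq Y$), so the direct sphere construction is preferable.
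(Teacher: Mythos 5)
Your route differs from the paper's. The paper shows that the HMF condition implies $K$ is an \emph{extractible complex} in the sense of Iriye--Kishimoto~\cite{2013arXiv1306.6221I}: since each $\sigma\in\mathcal C_I$ misses some vertex $i_\sigma\in I$, the inclusion $|\bd\sigma|\hookrightarrow|K_I|$ factors through $|K_{I-\{i_\sigma\}}|$, and composing with $\gamma_I^{-1}$ produces a self-homotopy-equivalence of $\Sigma|K_I|$ that factors through $\bigvee_{i\in I}\Sigma|K_{I-\{i\}}|$. The conclusion then follows by citing Corollary~3.3 of that paper. Your proposal instead tries to build the wedge-of-spheres equivalence $f\colon W\to\zk$ directly by constructing sphere classes and applying Whitehead's theorem.

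There is a genuine gap here, and you flag it yourself: verifying that $f$ induces an isomorphism on cohomology is not a bookkeeping check but the entire content of the theorem. Two issues make it substantive. First, the \quoter{pinch-type construction} extracting $f_{I,\sigma}\colon S^{|I|+|\sigma|-1}\to\zk$ from the top cell of $\mathcal Z_{\bd\sigma}\times(S^1)^{|I|-|\sigma|}$ requires a nullhomotopy, inside $\zk$, of the inclusion of the complement of that open top cell. This ultimately relies on the nullhomotopy of $(S^1)^{\times n}\hookrightarrow\zk$ (Lemma~\ref{LGT2}) applied recursively to the full subcomplexes $K_{I-\{i_\sigma\}}$, and that recursion is exactly the extractibility structure the paper isolates; without it the pinch need not exist, and you never invoke this fact. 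Second, even granting the pinches, showing that the resulting classes span all of $\tilde H^*(\zk)$ -- rather than merely living in the right degrees with matching ranks -- requires an induction over vertices that coordinates the choices of $\mathcal C_I$ across varying $I$, which is what Iriye--Kishimoto carry out. As written, your argument correctly reformulates the conclusion (that $\zk$ has free cohomology concentrated as a wedge of spheres would) and correctly identifies Whitehead's theorem as the closing move, but the essential middle step is declared rather than proved. The paper's reduction to extractibility is precisely the device that delegates this hard induction to the cited reference.
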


\begin{proof}
For each $I\subseteq [n]$, either $K_I$ is a simplex, boundary of a simplex, or else for each $\sigma\in\mc C_I$, 
we can pick an $i_\sigma\in I$ such that $\bd\sigma\subseteq K_{I-\{i_\sigma\}}$, 
so each inclusion \seqm{|\bd\sigma|}{}{|K_I|} factors through inclusions 
\seqmm{|\bd\sigma|}{}{|K_{I-\{i_\sigma\}}|}{}{|K_I|}. Take the composite
$$
f\wcolon\seqmmm{\Sigma|K_I|}{\gamma^{-1}_I}{\cvee{\sigma\in\mc C_I}{}{\Sigma|\bd\sigma|}}{}
{\cvee{i\in I}{}{\Sigma|K_{I-\{i_\sigma\}}|}}{}{\Sigma|K_I|}
$$
where $\gamma^{-1}_I$ is a homotopy inverse of $\gamma_I$,
the second last map includes the summand $\Sigma|\bd\sigma|$ into the summand $\Sigma|K_{I-\{i_\sigma\}}|$,
and the last map is the standard inclusion on each summand. 
Since the composite of the last two maps is $\gamma_I$, $f$ is a homotopy equivalence.
Then $K$ is an \emph{extractible complex} as defined in~\cite{2013arXiv1306.6221I}. 
Therefore $\zk$ is homotopy equivalent to a wedge of spheres or contractible by Corollary~$3.3$ therein.
\end{proof}

\subsection{Gluings and connected sums}
Now we look at the effect on category of moment angle complexes when two simplicial complexes are glued along a full subcomplex, 
or along a simplex with interior then deleted (i.e. a connected sum).
If $L$ and $K$ are simplicial complexes and $C$ is a full subcomplex common to both $L$ and $K$, then we obtain a new simplicial complex
$L\cup_C K$ by gluing $L$ and $K$ along $C$. One can always glue along simplices since they are always full subcomplexes.
When $C=\emptyset$, $L\cup_C K$ is just the disjoint union $L\sqcup K$. 

Given $\sigma\in K$, define the \emph{deletion} of the face $\sigma$ from $K$ to be the simplicial complex
given by 
$$
K\sm\sigma:=\cset{\tau\in K}{\sigma\not\subseteq\tau}.
$$
If $\sigma$ is a common face of $L$ and $K$, define the \emph{connected sum} $L\#_\sigma K$ to be the simplicial complex 
$(L\sm\sigma)\cup_{\bd\sigma}(K\sm\sigma)$.
In other words, $L\#_\sigma K$ is obtained by deleting $\sigma$ from $L$ and $K$ and gluing along the boundary $\bd\sigma$.
As a convention, we let $\z{\emptyset}:=\ast$ when $\emptyset$ is on empty vertex set.

\begin{proposition}
\label{PGluing2}
If $C$ is a (possibly empty) full subcomplex common to $K_1,\ldots,K_m$, then 
$$
\cat(\z{K_1\cup_C\cdots\cup_C K_m})\leq \max\{1,\cat(\z{K_1}),\ldots,\cat(\z{K_m})\}+\Cat(\z{C}).
$$
Moreover, if each $K_i$ is the $(d_i-1)$-skeleton of some $d_i$ dimensional simplicial complex $\bar K_i$, 
and $C$ is also a full subcomplex of each $\bar K_i$, then 
$$
\cat(\z{\bar K_1\cup_C\cdots\cup_C \bar K_m})\leq \max\{1,\cat(\z{K_1}),\ldots,\cat(\z{K_m})\}+\Cat(\z{C}).
$$
\end{proposition}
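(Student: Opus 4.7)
The plan is to realize $\z{K_1 \cup_C \cdots \cup_C K_m}$ as a pushout fitting the framework of Lemma~\ref{LGluing}. Let $V_0 = V(C)$ and $W_i = V(K_i) \setminus V_0$, and set $A_i = (S^1)^{W_i}$, $B_i = \prod_{j \neq i} A_j$, $E = \z{C}$, and $C_i = \z{K_i}$. Since every simplex of $K_1 \cup_C \cdots \cup_C K_m$ lies in at least one $K_i$ and (by fullness of $C$) the faces common to two or more $K_i$ are precisely those of $C$, translating to polyhedral products yields
$$
\z{K_1 \cup_C \cdots \cup_C K_m}\;=\;\cunion{i=1}{m}{\,\z{K_i}\times B_i}\;\subseteq\;(D^2)^{V_0 \sqcup W_1 \sqcup \cdots \sqcup W_m},
$$
with all pairwise intersections equal to $\z{C} \times \prod_j A_j = E \times \prod_j A_j$. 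Because the inclusions $f_i \times \ID_{B_i} \colon E \times \prod_j A_j \hookrightarrow C_i \times B_i$ are cofibrations of $CW$-pairs, this presents $\z{K_1 \cup_C \cdots \cup_C K_m}$ as the homotopy pushout $P$ appearing in Lemma~\ref{LGluing}.

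Next, I verify the nullhomotopy hypothesis. Assuming (per the paper's convention) that each $K_i$ has no ghost vertices, the restriction $(f_i)_{|A_i \times \ast}$ is the inclusion of $(S^1)^{W_i} \times \{\ast\}^{V_0}$ into $\z{K_i}$, and its image is contained in $(\bd D^2)^{V(K_i)} = \zsk{K_i}{-1}$. By Lemma~\ref{LGT2} this is contractible in $\z{K_i}$, so $(f_i)_{|A_i \times \ast}$ is nullhomotopic, and part~1 of Lemma~\ref{LGluing} delivers the first bound. For the second assertion I rerun the argument with $X_i = \z{\bar K_i} \times B_i$ and appeal to part~2 of Lemma~\ref{LGluing}. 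Since $K_i = \sk{\bar K_i}{d_i - 1}$, Lemma~\ref{LSkeleta} (with $\ell = d_i$) gives that $\z{\bar K_i} \setminus \z{K_i}$ is contractible in $\z{\bar K_i}$, and hence $X_i \setminus (C_i \times B_i) = (\z{\bar K_i} \setminus \z{K_i}) \times B_i$ is contractible in $X_i$. The analogous polyhedral-product decomposition identifies the corresponding pushout $P'$ with $\z{\bar K_1 \cup_C \cdots \cup_C \bar K_m}$, so part~2 of Lemma~\ref{LGluing} finishes the proof.

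The main obstacle I anticipate is the first step: confirming that the combinatorial pushout of simplicial complexes along a full subcomplex $C$ translates precisely into the topological pushout demanded by Lemma~\ref{LGluing}, with the correct common source $\z{C} \times \prod_j A_j$ and the appropriate $S^1$-factors on each leg. This is exactly where fullness of $C$ is used to control the intersections and to guarantee $\z{C}$ sits inside each $\z{K_i}$ as $\z{C} \times (S^1)^{W_i}$. The degenerate case $C = \emptyset$ needs a small separate check, with the convention $\z{\emptyset} = \ast$ giving $\Cat(\z{C}) = 0$ and the common source reducing to $\prod_j A_j$, but the same nullhomotopy argument via Lemma~\ref{LGT2} applies and yields the same bound.
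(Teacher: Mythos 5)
Your approach mirrors the paper's almost step for step: realize $\z{K_1\cup_C\cdots\cup_C K_m}$ as the pushout of inclusions $A_i\times E\times B_i\hookrightarrow \z{K_i}\times B_i$ with $A_i=(S^1)^{W_i}$, $E=\z{C}$, verify the nullhomotopy of $(f_i)_{|A_i\times\ast}$ via Lemma~\ref{LGT2}, and invoke the two parts of Lemma~\ref{LGluing}. (In fact your bookkeeping of the $S^1$-factors in $B_i$ is more careful than the paper's, which writes $(\bd D^2)^{N_i-\ell}$ where $N_i-(m-1)\ell$ is meant.) The first part and the $C=\emptyset$ remark are fine.

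However, the step \emph{``hence $X_i\setminus(C_i\times B_i)=(\z{\bar K_i}\setminus\z{K_i})\times B_i$ is contractible in $X_i$''} is false as stated, and the inference from ``$\z{\bar K_i}\setminus\z{K_i}$ is contractible in $\z{\bar K_i}$'' to ``its product with $B_i$ is contractible in $\z{\bar K_i}\times B_i$'' does not hold. Indeed, pick any $x_0\in\z{\bar K_i}\setminus\z{K_i}$; then $\{x_0\}\times B_i\subseteq X_i\setminus(C_i\times B_i)$, and composing with the retraction $X_i\to B_i$ shows that a nullhomotopy of the inclusion would force the identity of $B_i$ to be nullhomotopic. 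Since $B_i$ is a nontrivial torus (as soon as some $K_j\supsetneq C$ for $j\neq i$), this cannot happen. So the hypothesis of Lemma~\ref{LGluing} (part 2), \emph{as stated}, is not satisfied. The paper's own proof is terse enough that it never writes this claim down, but it leans on the same gap. The fix is to observe that what the \emph{proof} of Lemma~\ref{LGluing} (part 2) actually uses is only that each $X_i-C_i\times B_i$ is contractible in $P'$, not in $X_i$. And that weaker statement is true here: the contraction from Lemma~\ref{LSkeleta} first deforms $\z{\bar K_i}\setminus\z{K_i}$ into the torus $(\bd D^2)^{V(\bar K_i)}$, so $(\z{\bar K_i}\setminus\z{K_i})\times B_i$ deforms inside $X_i$ onto a subset of $(\bd D^2)^{V}$, where $V$ is the full vertex set of $\bar K_1\cup_C\cdots\cup_C\bar K_m$; this torus is contractible in $P'=\z{\bar K_1\cup_C\cdots\cup_C\bar K_m}$ by Lemma~\ref{LGT2}. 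With the hypothesis of Lemma~\ref{LGluing} (part 2) weakened to ``contractible in $P'$'' (which is all its proof needs), your argument and the paper's both go through.
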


\begin{proof}

Let $K_i$ be on vertex set $[n_i]$, and $C$ has $\ell$ vertices. 
If $C$ is on vertex set $[n_i]$, possibly with ghost vertices, 
the inclusion \incl{C}{}{K_i} induces a coordinate-wise inclusion \incl{(\bd D^2)^{\times n_i-\ell}\times\z{C}}{f_i}{\z{K_i}}.
By Lemma~\ref{LGT2}, $f_i$ is nullhomotopic when restricted to $(\bd D^2)^{\times n_i-\ell}\times\ast$.
Let $N_i:=\Sigma_{j\neq i}n_j$. Note $\z{K_1\cup_C\cdots\cup_C K_m}$ is the pushout of
\incl{(\bd D^2)^{\times n_i-\ell}\times\z{C}\times(\bd D^2)^{\times N_i-\ell}}{f_i\times\ID}{\z{K_i}\times(\bd D^2)^{\times N_i-\ell}}
for $i=1,\ldots,m$. 
Since each of these maps are inclusions of subcomplexes, $\z{K_1\cup_C\cdots\cup_C K_m}$ is homotopy equivalent to the homotopy pushout $P$ 
of these maps. The first inequality therefore follows from the first part of Lemma~\ref{LGluing}.

By Lemma~\ref{LSkeleta}, $\z{\bar K_i}-\z{K_i}$ is contractible in $\z{\bar K_i}$,  
so the second equality follows from the second part of Lemma~\ref{LGluing}. 

\end{proof}

\begin{example}
\label{EGluing}
In particular, when $C$ is a simplex 
$\cat(\z{L\cup_C K})\leq \max\{1,\cat(\zl),\cat(\zk)\}$ and
$\cat(\z{\bar L\cup_C \bar K})\leq \max\{1,\cat(\zl),\cat(\zk)\}$ 
since $\z{C}$ is contractible.
These also hold when $C$ is the empty simplex and $\z{C}=\ast$
in which case $\bar L\cup_C \bar K=\bar L\sqcup \bar K$ and $L\cup_C K=L\sqcup K$.
When $C$ is the boundary of a simplex,   
$\cat(\z{L\cup_C K})\leq \max\{1,\cat(\zl),\cat(\zk)\}+1$
since $\z{C}$ here is a sphere.
\end{example}

The bound in Proposition~\ref{PGluing2} is not always optimal, sometimes far from it. 
If $K$ and $\Delta^{n-1}$ are on vertex set $[n]$ and $L$ is formed by gluing $\Delta^{n-1}$ and $\{n+1\}\ast K$ along $K$,
then $\zl$ is a $co$-$H$-space by~\cite{2013arXiv1306.6221I} so $\cat(\zl)=1$.
In fact, it is not difficult to directly show that $\zl\simeq\Sigma^2\zk$. 
On the other hand, 
Proposition~\ref{PGluing2} gives $\cat(\zl)\leq \max\{1,\cat(\zk)\}+\Cat(\zk)$ since $\z{\{n+1\}\ast K}\cong D^2\times\zk\simeq\zk$ 
and $\z{\Delta^n}$ is contractible.

\begin{corollary}
\label{CConnectedSum}
Suppose
$$
K=L_1\#_{\sigma_1} L_2\#_{\sigma_2}\cdots\#_{\sigma_{k-1}} L_k  
$$
where $\dim L_i=d$, $\sigma_i$ is a $d$-face common to $L_i$ and $L_{i+1}$, 
and $\sigma_i\cap\sigma_j=\emptyset$ when $i\neq j$. Then
$$
\cat(\zk) \leq \max\{1,\cat(\zsk{L_1}{d-1}),\ldots,\cat(\zsk{L_k}{d-1})\}+1.
$$
\end{corollary}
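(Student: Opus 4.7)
The plan is to prove both the main bound and a companion bound on the $(d-1)$-skeleton's moment-angle complex simultaneously by induction on $k$. Specifically, I will show simultaneously
\begin{itemize}
\item[(A)] $\cat(\zsk{K}{d-1}) \leq \max\{1,\cat(\zsk{L_1}{d-1}),\ldots,\cat(\zsk{L_k}{d-1})\}$, and
\item[(B)] $\cat(\zk) \leq \max\{1,\cat(\zsk{L_1}{d-1}),\ldots,\cat(\zsk{L_k}{d-1})\}+1$,
\end{itemize}
since (B) follows from (A) immediately by Lemma~\ref{LSkeleta2} applied to the $d$-dimensional complex $K$: $\cat(\zk)\leq \cat(\incl{\zsk{K}{d-1}}{}{\zk})+1\leq\cat(\zsk{K}{d-1})+1$.

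The base case $k=1$ of (A) is trivial. For the inductive step, set $K'=L_1\#_{\sigma_1}\cdots\#_{\sigma_{k-2}}L_{k-1}$, so that $\sk{K}{d-1}=\sk{K'}{d-1}\cup_{\bd\sigma_{k-1}}\sk{L_k}{d-1}$. The direct application of Proposition~\ref{PGluing2} to this gluing would add $\Cat(\z{\bd\sigma_{k-1}})=1$ (since $\z{\bd\sigma_{k-1}}\simeq S^{2d+1}$), which is too costly. To avoid this, I introduce the auxiliary $d$-dimensional complex
$$
M = \sk{K'}{d-1}\cup\sigma_{k-1}\cup\sk{L_k}{d-1},
$$
which has $\sigma_{k-1}$ as its unique $d$-face, so that $\sk{M}{d-1}=\sk{K}{d-1}$. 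Decompose $M$ as the gluing $M=(\sk{K'}{d-1}\cup\sigma_{k-1})\cup_{\sigma_{k-1}}(\sk{L_k}{d-1}\cup\sigma_{k-1})$ along the \emph{simplex} $\sigma_{k-1}$; each factor is $d$-dimensional with $(d-1)$-skeleton $\sk{K'}{d-1}$ or $\sk{L_k}{d-1}$, and $\sigma_{k-1}$ is a full subcomplex of each (being a simplex). The moreover part of Proposition~\ref{PGluing2} applies with $C=\sigma_{k-1}$ and $\Cat(\z{\sigma_{k-1}})=0$, yielding
$$
\cat(\z{M})\leq \max\{1,\cat(\zsk{K'}{d-1}),\cat(\zsk{L_k}{d-1})\}\leq \max\{1,\cat(\zsk{L_1}{d-1}),\ldots,\cat(\zsk{L_k}{d-1})\}
$$
by the inductive hypothesis (A) applied to $K'$.

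The crux then is to transfer this bound on $\cat(\z{M})$ to the desired bound on $\cat(\zsk{K}{d-1})=\cat(\zsk{M}{d-1})$. This will be the main technical obstacle, and my plan is to verify it by observing that $M$ has only a single $d$-face and running the skeletal-filtration argument of Lemma~\ref{LSkeleta2} in reverse: the extra top-cell of $\z{M}$ over $\zsk{M}{d-1}$ is an open product that is contractible in $\z{M}$ by Lemma~\ref{LSkeleta}, so a categorical cover of $\z{M}$ restricts, after applying Lemma~\ref{LOpen} to extend open subsets of $\zsk{M}{d-1}$, to a categorical cover of $\zsk{M}{d-1}$ of the same cardinality whose contractions are realised through $\z{M}$; the contractibility-in-$\z{M}$ is transferred to contractibility-in-$\zk$ by exploiting that both spaces share $\zsk{K}{d-1}$ and differ only by cells whose attaching spheres bound into the common $\zsk{K}{d-1}$. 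This step is where the disjointness hypothesis $\sigma_i\cap\sigma_j=\emptyset$ becomes essential, as it ensures $\sigma_{k-1}$ is full in both factors and the cell transfer can be performed cleanly.

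Once (A) is established at step $k$, (B) at step $k$ follows from Lemma~\ref{LSkeleta2} as above, completing the induction and hence the proof of Corollary~\ref{CConnectedSum}. The principal difficulty, as indicated, is the last transfer step; everything else is a routine combination of the moreover part of Proposition~\ref{PGluing2} (with the simplex $\sigma_{k-1}$ as the common full subcomplex, which is why $\Cat(\z{\sigma_{k-1}})=0$ absorbs the gluing cost) and Lemmas~\ref{LSkeleta} and \ref{LSkeleta2}.
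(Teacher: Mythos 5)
Your inductive setup is clean, and the reduction to (A) via Lemma~\ref{LSkeleta2}, as well as the use of the auxiliary complex $M$ and the moreover part of Proposition~\ref{PGluing2} to bound $\cat(\z M)$, all check out (indeed $\Cat(\z{\sigma_{k-1}})=0$ since $\z{\sigma_{k-1}}$ is a polydisc). But the ``transfer'' step --- passing from a bound on $\cat(\z M)$ to a bound on $\cat(\zsk{M}{d-1})$ --- is a genuine gap, and it is precisely the open problem the paper flags after Corollary~\ref{CRealMAC}: ``It is plausible that the second bound can be strengthened to $\cat(\zsk{K}{i})\leq\cat(\zk)$.'' You cannot run Lemma~\ref{LSkeleta2} ``in reverse'': that lemma only bounds $\cat$ of the larger space from above by $\cat$ of the smaller one plus codimension; there is no converse. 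Restricting a categorical cover $\{U_i\}$ of $\z M$ to $\zsk{M}{d-1}$ does give an open cover, but the contraction of $U_i$ in $\z M$ may pass through the top cell $\z M-\zsk{M}{d-1}$, so $U_i\cap\zsk{M}{d-1}$ need not be contractible in $\zsk{M}{d-1}$ (nor in $\zk$). The cited result from~\cite{MR1933583} that CW-skeleta cannot have larger category does not apply either, because --- as the paper explicitly notes --- $\zsk{K}{i}$ is \emph{not} the CW $(i+1)$-skeleton of $\zk$; that identification only holds for the real moment-angle complex $\rzk$, which is exactly why Corollary~\ref{CRealMAC} detours through $\rzk$.

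The paper's proof sidesteps this entirely by being non-inductive. Set $C=\bd\sigma_1\sqcup\cdots\sqcup\bd\sigma_{k-1}$, group the pieces into $K_1=\bigsqcup_{\text{odd }j}L_j$ and $K_2=\bigsqcup_{\text{even }j}L_j$, and let $K'_i$ be $K_i$ with all $\sigma_j$ deleted; then $K=K'_1\cup_C K'_2$, with $C$ full in both, and $\sk{K'_i}{d-1}=\sk{K_i}{d-1}$. The moreover part of Proposition~\ref{PGluing2} gives $\cat(\zk)\leq\max\{1,\cat(\zsk{K_1}{d-1}),\cat(\zsk{K_2}{d-1})\}+\Cat(\z C)$, and the whole point of grouping is that $C$, a disjoint union of boundaries of simplices, is an $HMF$-complex, so $\z C$ is a wedge of spheres and $\Cat(\z C)=1$ --- this is where the $+1$ comes from, once, globally, rather than repeatedly in an induction. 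Finally $\cat(\zsk{K_i}{d-1})$ is bounded by iterating the $C=\emptyset$ case of Example~\ref{EGluing} over the disjoint pieces. If you want to salvage your inductive approach you would first need to prove the transfer inequality $\cat(\zsk{M}{d-1})\leq\cat(\z M)$ for a complex with a single top face, which is currently not available.
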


\begin{proof}
Take the disjoint unions 
$$
C:=\bd\sigma_1\sqcup\cdots\sqcup\bd\sigma_{k-1}
$$ 
$$
K_1:=\csqdunion{1\leq 2i+1\leq k-1}{}{L_{2i+1}}
$$
$$
K_2:=\csqdunion{2\leq 2i\leq k-1}{}{L_{2i}}
$$
and take the iterated face deletions $K'_1:=K_1\sm(\sigma_1\sqcup\cdots\sqcup\sigma_{k-1})$ and
$K'_2:=K_2\sm(\sigma_1\sqcup\cdots\sqcup\sigma_{k-1})$. 
Then $C$ is a full subcomplex common to both $K'_1$ and $K'_2$, 
and to both $\sk{K'_1}{d-1}$ and $\sk{K'_2}{d-1}$.
Moreover, $\sk{K'_1}{d-1}=\sk{K_1}{d-1}$ and $\sk{K'_2}{d-1}=\sk{K_2}{d-1}$,
and $K=K'_1\cup_C K'_2$, so by the second part of Proposition~\ref{PGluing2},
$$
\cat(\z{K})\leq \max\{1,\cat(\zsk{K_1}{d-1}),\cat(\zsk{K_2}{d-1})\}+\Cat(\z{C}).
$$
It is clear that $C$ is an $HMF$-complex, so $\z{C}$ is homotopy equivalent to a wedge of spheres 
and $\Cat(\z{C})=1$. Alternatively, this follows from Theorem~$10.1$ in~\cite{MR2321037}.
Moreover, we can think of $\zsk{K_1}{d-1}$ as being built up iteratively by gluing 
$\bigsqcup_{1\leq 1\leq j}{}{\zsk{L_{2i+1}}{d-1}}$ and $\zsk{L_{2j+3}}{d-1}$ along the empty simplex,
so iterating the first inequality in Example~\ref{EGluing},
$$
\cat(\zsk{K_1}{d-1})\leq \max\{\,1,\,\cat(\zsk{L_1}{d-1}),\,\cat(\zsk{L_3}{d-1}),\,\ldots\,\}.
$$ 
Likewise,
$
\cat(\zsk{K_2}{d-1})\leq \max\{\,1,\,\cat(\zsk{L_2}{d-1}),\,\cat(\zsk{L_4}{d-1}),\,\ldots\,\}.
$ 
The inequality in the lemma follows.
\end{proof}

\section{Some Specific Applications: Triangulated Surfaces and Spheres}

We now apply the general tools developed in the previous sections to compute exact values for the $LS$-category of 
moment angle complexes over triangulated closed oriented surfaces and certain classes of triangulated higher dimensional spheres that are dual
to boundaries of polytopes built up iteratively via vertex cuts and products.

\subsection{Triangulated spheres}

Let $\mc C_0:=\{S^0\}$, $\mc C_1$, and $\mc C_2$ consist of all triangulated $0$,$1$, and $2$-spheres, 
and for $d\geq 3$, let $\mc C_d$ be the class of triangulated $d$-spheres defined by $K\in \mc C_d$ if  
\begin{itemize}
\item[(1)] $K=L_1\ast\cdots\ast L_k$ for some $L_i\in\mc C_{d_i}$, $d_i\leq 2$, and $d_1+\cdots+d_k=d-k+1$;  
\item[(2)] $K=K_1\#_{\sigma_1}\cdots\#_{\sigma_{\ell-1}}K_\ell$ where $\sigma_i$ is a $d$-face common to $K_i$ and $K_{i+1}$ 
with $\sigma_i\cap\sigma_j=\emptyset$ when $i\neq j$, and each $K_i=L_{1,i}\ast\cdots\ast L_{k_i,i}$ 
is of the form (1) such that each $L_{j,i}$ is not the boundary of a simplex.
\end{itemize}

Recall that the join $L\ast L'$ is the simplicial complex $\cset{\sigma\sqcup\sigma'}{\sigma\in L,\,\sigma'\in L'}$, 
and the connected sum $K\#_{\sigma} K'$ is given topologically by gluing triangulations $K$ and $K'$ of $S^d$ 
along a common $d$-face $\sigma$, and deleting its interior.

\begin{bigremark}
If $L$ and $L'$ are boundaries $\bd P^*$ and $\bd P'^*$ of the duals of simple polytopes $P$ and $P'$,
then $L\ast L'$ is the boundary of $(P\times P')^*$, while $L\#_{\sigma} L'$ is the boundary of dual $Q^*$, 
where $Q$ is obtained by taking the vertex cut at the vertices of $P$ and $P'$ that are dual to $\sigma$, 
gluing along the new hyperplane and removing it after gluing. 
\end{bigremark}

Our goal in the next few subsections will be to show the following.

\begin{theorem}
\label{TMain}
If $K$ on vertex set $[n]=\{1,\ldots,n\}$ is any triangulated $d$-sphere for $d=0,1,2$, 
or $K\in \mc C_d$ for $d\geq 3$, then the following are equivalent.
\begin{itemize}
\item[(1)] $K$ is $m$-annihilating over $\Z$;
\item[(2)] $\Nill(\mathrm{Tor}_{\Z[v_1,\ldots,v_n]}(\Z[K],\Z))\leq m+1$ (equivalently $\cuplen(\zk)\leq m$);
\item[(3)] for any filtration of full subcomplexes 
$$
\bd\Delta^{d+2-\ell}=K_{I_\ell}\subsetneq K_{I_{\ell-1}}\subsetneq\cdots\subsetneq K_{I_1}=K
$$ 
such that $|K_{I_i}|\cong S^{d+1-i}$, we have $\ell\leq m$; 
\item[(4)] $\cat(\zk)\leq m$.
\end{itemize}
Moreover, $1\leq m\leq d+1$; that is, $K$ satisfies any of the above for some $m$ which cannot be greater than $d+1$.~$\qqed$ 
\end{theorem}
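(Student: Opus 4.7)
The plan is to close the cycle (4) $\Rightarrow$ (1) $\Rightarrow$ (2) $\Rightarrow$ (3) $\Rightarrow$ (4), together with the range bound $1 \leq m \leq d+1$. The range on $m$ is immediate from both sides: any spherical filtration descends one dimension per step from $S^d$ down to $\bd\Delta^1 = S^0$, forcing $\ell \leq d+1$; on the other hand Lemma \ref{LSkeleta2} already gives $\cat(\zk) \leq \dim K + 1 = d+1$, so (4) holds at worst with $m = d+1$.

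The implication (4) $\Rightarrow$ (1) is the Proposition stated in the introduction, and (1) $\Rightarrow$ (2) is by definition of $m$-Golodness. For (2) $\Rightarrow$ (3) I would argue contrapositively: given a spherical filtration of length $\ell$, set $J_i = I_i \setminus I_{i+1}$ for $i < \ell$ and $J_\ell = I_\ell$, and build a non-zero $\ell$-fold cup product in $H^+(\zk)$ via the Hochster isomorphism \eqref{EHochster}. The product of the associated classes is computed by the inclusion-induced map $\iota_{J_1,\ldots,J_\ell}\colon K \hookrightarrow K_{J_1} \ast \cdots \ast K_{J_\ell}$, and I would show via combinatorial Alexander duality inside the $d$-sphere $K$, applied iteratively down the filtration, that the top-degree summand survives, forcing $\ell \leq m$.

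The main work is (3) $\Rightarrow$ (4), which I would prove by structural induction on $K$. The cases $d = 0$ (trivially $\zk \cong S^3$, $m = 1$) and $d = 1$ (polygon decomposed as an iterated connected sum of triangles and handled by Corollary \ref{CConnectedSum}) are routine. The decisive case is $d = 2$: I would use the classical fact that any triangulated $2$-sphere $K \neq \bd\Delta^3$ contains a proper full sub-sphere, namely $S^0$ or $S^1$, then split $K$ along it. Through Proposition \ref{PGluing2} and Corollary \ref{CConnectedSum} this yields a bound of the form $\cat(\zk) \leq \max_i \cat(\z{K_i}) + \Cat(\z{C})$ where $C$ is the sub-sphere, and $\Cat(\z{C}) = 1$ since $\z{C}$ is a product of spheres. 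Iteratively applying this, the number of splits matches the maximal spherical filtration length. For $d \geq 3$ with $K = L_1 \ast \cdots \ast L_k \in \mc C_d$, we use $\zk \cong \prod_i \z{L_i}$ with $\cat$ subadditive, and induct factor-by-factor, noting that a filtration of $K$ restricts to filtrations of each $L_i$ whose lengths sum to $\ell$. When $K$ is an admissible connected sum of such joins, Corollary \ref{CConnectedSum} and the skeletal bounds reduce back to the join case.

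The principal obstacle is the $d = 2$ base case: linking the combinatorial structure (presence of proper full sub-spheres) to a sufficiently refined gluing decomposition of $K$, and then checking that the upper bound from Proposition \ref{PGluing2} iterates tightly enough to match the cup-length/filtration lower bound. Everything else is bookkeeping on top of the product, gluing, skeletal, and cup-length inequalities already established.
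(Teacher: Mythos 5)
Your cycle $(4)\Rightarrow(1)\Rightarrow(2)\Rightarrow(3)\Rightarrow(4)$ is the right overall shape, the first two implications are handled exactly as in the paper (Proposition~\ref{PRudyak} and the definition), and the $d\geq 3$ join case and range bound $1\leq m\leq d+1$ are fine. But there are two genuine gaps.

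For $(3)\Rightarrow(4)$ with $d=2$, the splitting strategy breaks on the claim that $\Cat(\z{C})=1$ because $\z{C}$ \quoter{is a product of spheres.} If $C$ is a chordless $m$-gon with $m\geq 4$, then $\z{C}$ is a connected sum of products of two spheres (e.g.\ $S^3\times S^3$ for $m=4$), so $\cuplen(\z{C})=2$ and $\Cat(\z{C})\geq 2$. Feeding that into Proposition~\ref{PGluing2} costs $+2$, not $+1$, so you cannot hope to get $\cat(\zk)\leq 2$ in the case where $K$ has chordless cycles but none of length $\geq 4$ — the only case where the bound $\cat(\zk)\leq \dim K+1$ is not already sharp enough. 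The paper avoids gluing along cycles entirely and instead observes that \quoter{no chordless cycle of length $\geq 4$} means $\sk{K}{1}$ is chordal, hence an $HMF$-complex, hence $\cat(\zsk{K}{1})\leq 1$ by Proposition~\ref{PExtractible}, and then applies the skeletal bound $\cat(\zk)\leq\cat(\zsk{K}{1})+1$ (Lemma~\ref{LSkeleta2}). That chordality step is the heart of the $d=2$ argument and is absent from your plan.

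For $(2)\Rightarrow(3)$, appealing to \quoter{combinatorial Alexander duality iterated down the filtration} undersells what is actually required. Duality for Gorenstein$^*$ complexes gives only that $K_{I_i-I_{i+1}}$ has the reduced homology of $S^0$; the paper's Lemma~\ref{LFiltCup} needs the stronger statement $|K_{I_i-I_{i+1}}|\simeq S^0$ (i.e.\ two contractible pieces), because it builds an explicit homotopy equivalence $|K_{I_i}|\to |K_{I_{i+1}}|\ast S^0$ by collapsing each component, and then composes these to exhibit a nonzero top cup product via Hochster. That stronger homotopy statement is exactly condition $(1)$ of the \quoter{well-behaved} hypothesis (Definition~\ref{DWell}), and verifying it — together with conditions $(2)$--$(4)$ needed to propagate well-behavedness under join and connected sum — occupies Lemmas~\ref{Ld012Well}, \ref{LCSumWell}, \ref{LJoinWell} and their supporting torsion lemmas. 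Your sketch does not address this, so as written the nonvanishing of the length-$\ell$ cup product is not established.
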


\subsection{Filtration length on spheres}
The simplest example is $\filt(\bd\Delta^{d+1})=1$. 
Generally, there are the following bounds with respect to joins, connected sums, and cup product length.

\begin{lemma}
\label{LFiltCSum}
If $K$ and $L$ are both triangulations of $S^d$, and $\sigma$ is a $d$-face common to $K$ and $L$, 
then 
$$
\filt(K\#_\sigma L)\geq \max\{2,\filt(K),\filt(L)\}.
$$
\end{lemma}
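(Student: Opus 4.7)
The plan is to establish the three lower bounds $\filt(K\#_\sigma L)\geq 2$, $\filt(K\#_\sigma L)\geq \filt(K)$, and $\filt(K\#_\sigma L)\geq \filt(L)$ separately, each by exhibiting an explicit spherical filtration of $K\#_\sigma L$ of the appropriate length.

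For the bound $\geq 2$, I would exhibit the length-$2$ spherical filtration $\bd\sigma\subsetneq K\#_\sigma L$. Since $\sigma$ is deleted in forming $K\#_\sigma L$ while every proper face of $\sigma$ remains (being a face of $K\backslash\sigma$), the full subcomplex of $K\#_\sigma L$ on vertex set $\sigma$ is precisely $\bd\sigma\cong\bd\Delta^d$, which is a $(d-1)$-sphere. Setting $I_2=\sigma$ (of cardinality $d+1 = d+3-2$) and $I_1$ equal to the whole vertex set, this is a valid spherical filtration.

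For $\filt(K\#_\sigma L)\geq \filt(K)$, let $m=\filt(K)$ and fix a spherical filtration
\[
\bd\Delta^{d+2-m}=K_{I_m}\subsetneq\cdots\subsetneq K_{I_1}=K.
\]
The key observation is that for every $i\geq 2$ the vertex set $I_i$ cannot contain $\sigma$: since $K_{I_i}$ is a $(d+1-i)$-sphere with $d+1-i<d$, it carries no $d$-face, so $\sigma\notin K_{I_i}$, forcing $\sigma\not\subseteq I_i$. Consequently
\[
(K\#_\sigma L)_{I_i}=K_{I_i}\qquad\text{for all } i\geq 2,
\]
i.e.\ each $K_{I_i}$ survives as a full subcomplex of $K\#_\sigma L$. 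Writing $\mathcal V,\mathcal V'$ for the vertex sets of $K$ and $L$ (with $\mathcal V\cap\mathcal V'=\sigma$), I would then define the filtration of $K\#_\sigma L$ by $J_1=\mathcal V\cup\mathcal V'$ and $J_i=I_i$ for $i\geq 2$. The top $(K\#_\sigma L)_{J_1}=K\#_\sigma L$ is a $d$-sphere, the intermediate $(K\#_\sigma L)_{J_i}=K_{I_i}$ are the spheres of the original filtration, the bottom is $\bd\Delta^{d+2-m}$, and strict nesting $J_2=I_2\subseteq\mathcal V\subsetneq J_1$ holds because $\mathcal V'\setminus\sigma$ is nonempty ($L$ has at least $d+2$ vertices). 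This yields a length-$m$ spherical filtration of $K\#_\sigma L$. The same construction applied to $L$ gives $\filt(K\#_\sigma L)\geq\filt(L)$.

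The only real content is the dimension observation that the subspheres of an original filtration (below the top) automatically avoid the $d$-face $\sigma$; everything else is bookkeeping about full subcomplexes under connected sum. No significant obstacle is expected.
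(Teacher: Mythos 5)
Your proof is correct and takes essentially the same route as the paper: exhibit the length-$2$ filtration $\bd\sigma\subsetneq K\#_\sigma L$, and observe that the lower terms of any spherical filtration of $K$ (or $L$) persist as full subcomplexes of $K\#_\sigma L$. If anything you are more explicit than the paper, which records only the converse inclusion (that lower-dimensional sphere full subcomplexes of $K\#_\sigma L$ lie in $K$, in $L$, or equal $\bd\sigma$) and then asserts the lemma follows immediately; your observation that $\sigma\not\subseteq I_i$ for $i\geq 2$, hence $(K\#_\sigma L)_{I_i}=K_{I_i}$, is precisely the step that closes that gap.
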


\begin{proof}

A full subcomplex $N_{\mc I}$ of $K\#_\sigma L$ satisfying $|N_{\mc I}|\cong S^k$ for some $k<d$ must either be a full subcomplex
of exactly one of $K$ or $L$, or else $N_{\mc I}=\bd\sigma$, otherwise $N_{\mc I}$ would have a $(k-1)$-face contained in three $k$-faces.
Moreover, $K\#_\sigma L$ always has the length $2$ spherical filtration $\bd\sigma\subsetneq K\#_\sigma L$.
The lemma follows immediately. 
\end{proof}

\begin{lemma}
\label{LFiltJoin}
If $K$ and $L$ are any triangulated spheres, then 
$$
\filt(K\ast L)\geq \filt(K)+\filt(L).
$$
\end{lemma}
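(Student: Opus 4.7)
The plan is to splice the given spherical filtrations of $K$ and $L$ into a spherical filtration of $K\ast L$.

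Fix $a=\filt(K)$ and $b=\filt(L)$, with spherical filtrations
$\bd\Delta^{d+2-a}=K_{I_a}\subsetneq\cdots\subsetneq K_{I_1}=K$ and
$\bd\Delta^{d'+2-b}=L_{J_b}\subsetneq\cdots\subsetneq L_{J_1}=L$,
where $d=\dim K$ and $d'=\dim L$ (so $K\ast L$ is a $(d+d'+1)$-sphere, via $|K\ast L|\cong|K|\ast|L|\simeq\Sigma|K|\wedge|L|$). The first observation, immediate from how the simplicial join is formed, is that full subcomplexes of $K\ast L$ are always joins of full subcomplexes: $(K\ast L)_{I\sqcup J}=K_I\ast L_J$ for any $I\subseteq V(K)$ and $J\subseteq V(L)$. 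Consequently each $K_{I_i}\ast L_{J_j}$ is a full subcomplex of $K\ast L$ whose realisation is the join $|K_{I_i}|\ast|L_{J_j}|$ of spheres, hence itself a sphere of dimension $(d+1-i)+(d'+1-j)+1$, and the same discussion identifies $K_{I_a}=(K\ast L)_{I_a}$ as a full sphere subcomplex.

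The main construction is the chain of full subcomplexes
\[
K_{I_a} \,\subsetneq\, K_{I_a}\ast L_{J_b} \,\subsetneq\, K_{I_a}\ast L_{J_{b-1}} \,\subsetneq\, \cdots \,\subsetneq\, K_{I_a}\ast L_{J_1} \,\subsetneq\, K_{I_{a-1}}\ast L_{J_1} \,\subsetneq\, \cdots \,\subsetneq\, K_{I_1}\ast L_{J_1} \,=\, K\ast L,
\]
which ascends first along the $L$-filtration at the bottom $K$-level $K_{I_a}$, then along the $K$-filtration at the top $L$-level $L_{J_1}$, after the initial jump from $K_{I_a}$ to $K_{I_a}\ast L_{J_b}$. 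All inclusions are strict: the ones coming from the filtrations of $K$ or $L$ are strict by hypothesis, and $K_{I_a}\subsetneq K_{I_a}\ast L_{J_b}$ is strict because $L_{J_b}\neq\emptyset$. Every term is a topological sphere by the join-of-spheres calculation, and the bottom $K_{I_a}=\bd\Delta^{d+2-a}$ is a boundary of a simplex as demanded.

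Counting the terms gives $1+b+(a-1)=a+b$, so this is a spherical filtration of $K\ast L$ of length $a+b$, yielding $\filt(K\ast L)\geq a+b=\filt(K)+\filt(L)$. The construction is completely direct and the only small verification is that the first inclusion $K_{I_a}\subsetneq K_{I_a}\ast L_{J_b}$ is strict, which is immediate from $L_{J_b}\neq\emptyset$; no serious obstacle arises.
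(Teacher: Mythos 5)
Your construction and count are exactly the paper's: the chain $K_{I_a}\subsetneq K_{I_a}\ast L_{J_b}\subsetneq\cdots\subsetneq K_{I_a}\ast L_{J_1}\subsetneq K_{I_{a-1}}\ast L_{J_1}\subsetneq\cdots\subsetneq K\ast L$ with $a+b$ terms is precisely the one written in the paper's proof, so the approach matches.

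That said, both you and the paper omit one verification. Under the definition given, a spherical filtration of a $D$-sphere $Z$ of length $m$ requires the $i$-th term (from the top) to be a $(D+1-i)$-sphere, so consecutive terms must drop in dimension by exactly one and the bottom must be $\bd\Delta^{D+2-m}$. You check strictness, that each term is a sphere, and that the bottom is the boundary of a simplex, but not the dimension condition. For $Z=K\ast L$ one has $D=d+d'+1$, and the step from the bottom term $K_{I_a}$ (a $(d+1-a)$-sphere) to $K_{I_a}\ast L_{J_b}$ (a $((d+1-a)+(d'+1-b)+1)$-sphere) jumps by $d'+2-b$ in dimension, which equals $1$ only when $b=d'+1$; the symmetric choice of bottom, the full subcomplex on the vertices of $L_{J_b}$, needs $a=d+1$. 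If neither $a=d+1$ nor $b=d'+1$, the chain is not a spherical filtration as defined. The simplest such case is $K=L=\bd\Delta^2$, where $a=b=1$ and $d=d'=1$: the chain is $\bd\Delta^2\subsetneq\bd\Delta^2\ast\bd\Delta^2$ with dimensions $1$ and $3$, and indeed $\bd\Delta^2\ast\bd\Delta^2$ has no full subcomplex that is a $2$-sphere, since every full subcomplex has the form $(\bd\Delta^2)_I\ast(\bd\Delta^2)_J$ and a proper full subcomplex of $\bd\Delta^2$ is a simplex. You have reproduced the paper's argument faithfully, so this is not an error you introduced, but as stated the argument is complete only under the extra hypothesis $a=d+1$ or $b=d'+1$, and a reader checking the definitions carefully will want that addressed.
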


\begin{proof}

Let $K$ and $L$ be on vertex sets $[n]$ and $[m]$.
Let $d:=\dim K$, $d':=\dim L'$, $\ell:=\filt(K)$, $\ell':=\filt(L)$, and take 
$K_{I_\ell}\subsetneq\cdots\subsetneq K_{I_1}=K$ and
$L_{J_{\ell'}}\subsetneq\cdots\subsetneq L_{J_1}=L$ 
to be spherical filtrations of $K$ and $L$.

Since $|K_{I_i}\ast L_{J_j}|\cong |K_{I_i}|\ast|L_{J_j}|\cong S^{d+1-i}\ast S^{d'+1-j}\cong S^{d+d'-i-j+3}$,
and  $|K_{I_i}\ast L_{J_j}|$ is a full subcomplex of $|K_{I_{i'}}\ast L_{J_{j'}}|$ when $i\leq i'$ and $j\leq j'$,
we have a length $\ell+\ell'$ spherical filtration of $K\ast L$
$$
(K_{I_\ell}\ast L_{J_{\ell'}})\subsetneq\cdots
\subsetneq (K_{I_\ell}\ast L_{J_2})
\subsetneq (K_{I_\ell}\ast L_{J_1})
\subsetneq (K_{I_{\ell-1}}\ast L_{J_1})
\subsetneq\cdots\subsetneq (K_{I_1}\ast L_{J_1})=K\ast L.
$$
Therefore $\filt(K\ast L)\geq\ell+\ell'$. 
\end{proof}

\subsection{Triangulated $d$-spheres for $d=0,1,2$}
The only triangulated $0$-sphere is $S^0$, and the only triangulated $1$-sphere with $n\geq 3$ vertices is the $n$-gon,
both of which have an $LS$-category that is easy to characterize. For higher dimensional spheres the following definition 
will be useful.
\begin{definition}
We will say that $C$ is a \emph{chordless cycle} in $K$ with $m\geq 3$ vertices if $C$ is a full subcomplex $K_I$ of $K$, 
and $C$ is an $m$-gon for some $m\geq 3$. When $K$ is a graph, this is the same as $C$ being an induced cycle of $K$.
A simplicial complex $K$ is said to be \emph{chordal} if it contains no chordless cycles with $4$ or more vertices.
\end{definition}

\begin{lemma} 
\label{LCup1}
If $K$ is a triangulation of $S^1$ on vertex set $[n]$, then
\begin{itemize}
\item[(i)] $\filt(K)\geq 2$ whenever $K$ has at least $4$ vertices.
\end{itemize}
If $K$ is a triangulation of $S^2$, then 
\begin{itemize}
\item[(ii)] $\filt(K)\geq 2$ whenever $K$ has a chordless cycle;
\item[(iii)] $\filt(K)\geq 3$ whenever $K$ has a chordless cycle with at least $4$ vertices.
\end{itemize}
\end{lemma}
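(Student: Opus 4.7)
In all three parts the strategy is simply to write down an explicit spherical filtration. The one recurring ingredient is the elementary fact that if $K_{I}$ is a full subcomplex of $K$ and $I'\subseteq I$, then $K_{I'}$ coincides with the full subcomplex of $K_{I}$ on $I'$; in particular, non-adjacency of a pair of vertices checked inside $K_{I}$ is automatically non-adjacency inside $K$.

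For (i), I would take any two non-consecutive vertices $a,b$ of the $n$-gon (these exist because $n\geq 4$); the full subcomplex $K_{\{a,b\}}$ has no edge, so $K_{\{a,b\}}=\bd\Delta^{1}$ and
\[
\bd\Delta^{1}=K_{\{a,b\}}\subsetneq K
\]
is a spherical filtration of length $2$.

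For (iii), let the given chordless $m$-cycle with $m\geq 4$ be $K_{I_{2}}$, which is already a full subcomplex of $K$ triangulating $S^{1}$. Since $m\geq 4$ I can pick $a,b\in I_{2}$ not adjacent along the cycle $K_{I_{2}}$, and by the fullness observation the pair $\{a,b\}$ is then also not an edge of $K$. Hence $K_{\{a,b\}}=\bd\Delta^{1}$, and
\[
\bd\Delta^{1}=K_{\{a,b\}}\subsetneq K_{I_{2}}\subsetneq K
\]
is a spherical filtration of length $3$.

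Part (ii) then falls out by splitting on the cycle length $m\geq 3$. If $m=3$, the chordless triangle $K_{I_{2}}$ is itself equal to $\bd\Delta^{2}$, yielding the length-$2$ spherical filtration $\bd\Delta^{2}=K_{I_{2}}\subsetneq K$; if $m\geq 4$, the filtration from (iii) gives $\filt(K)\geq 3\geq 2$. There is no genuine obstacle; the only step deserving care is the fullness argument in (iii) used to pass from non-adjacency in the cycle to non-adjacency in $K$, and this is immediate from the definition of a full subcomplex.
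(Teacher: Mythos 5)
Your proof is correct and follows essentially the same route as the paper: in each case you exhibit an explicit spherical filtration (two non-adjacent vertices giving $\bd\Delta^1\subsetneq K$ for $S^1$, and $\bd\Delta^1\subsetneq K_{I_2}\subsetneq K$ or $\bd\Delta^2\subsetneq K$ for $S^2$). The only difference is that you spell out the transitivity of full subcomplexes — that $(K_{I_2})_{\{a,b\}}=K_{\{a,b\}}$ — which the paper leaves implicit when it invokes part (i) for $K_I$; this is a harmless and welcome clarification.
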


\begin{proof}
Let $|K|\cong S^1$. If $K$ has at least $n\geq 4$ vertices, 
then $K$ being an $n$-gon means we can take $I'\subset [n]$, $|I'|=2$, 
such that $|K_{I'}|=S^0$. Then $S^0\subsetneq K$ is a length $2$ spherical filtration of $K$.

Let $|K|\cong S^2$, and $C:=K_I$ be a chordless cycle for some $I\subset [n]$. 
We have $|K_I|\cong S^1$ and $K_I=\bd\Delta^2$ when $K_I$ has $3$ vertices, 
in which case $K_I\subsetneq K$ is a length $2$ spherical filtration. 
Otherwise, when $K_I$ has at least $4$ vertices, $\filt(K)\geq 3$ since 
there is a spherical filtration $S^0\subsetneq K_I\subsetneq K$ by part~$(i)$.
\end{proof}

\begin{proposition} 
\label{Pd012}
Let $K$ be a triangulated $d$-sphere, $d=0,1,2$. Then 
$$
1\leq\filt(K)=\cuplen(\zk)=\cat(\zk)\leq d+1.
$$
In particular, letting $m:=\cat(\zk)$, when $d=1$ we have:\\
$(i)$ $m=1$ iff $K=\bd\Delta^2$\\
$(ii)$ $m=2$ iff $K$ has at least $4$ vertices.

When $d=2$ we have:\\
$(i)$ $m=1$ iff $K$ has no chordless cycles of any length (equivalently $K=\bd\Delta^3$)\\
$(ii)$ $m=2$ iff $K$ has a chordless cycle, but none with more than $3$ vertices\\
$(iii)$ $m=3$ iff $K$ has a chordless cycle with at least $4$ vertices.

\end{proposition}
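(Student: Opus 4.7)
The plan is to close the chain $\filt(K) \leq \cuplen(\zk) \leq \cat(\zk) \leq \dim K + 1$ by establishing $\cat(\zk) \leq \filt(K)$. The outer inequalities are already in hand: Lemma~\ref{LFiltCup} applies since triangulated $d$-spheres with $d \leq 2$ are well-behaved by Lemma~\ref{Ld012Well}, the middle inequality is the classical cup-length bound, and Lemma~\ref{LSkeleta2} supplies the upper bound. Once the chain collapses to equalities, the explicit values of $m$ can be read off from the cases below.

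The low-category cases are immediate. For $d = 0$ we have $\zk \cong S^3$. For $d = 1$, either $K = \bd\Delta^2$ (so $\zk \cong S^5$) or $K$ is an $n$-gon with $n \geq 4$, in which case Lemma~\ref{LCup1}(i) gives $\filt(K) \geq 2$ while the chain bounds $\cat(\zk) \leq 2$. For $d = 2$, either $K = \bd\Delta^3$ (so $\zk \cong S^7$), or $K$ contains a chordless cycle on at least four vertices, in which case Lemma~\ref{LCup1}(iii) gives $\filt(K) \geq 3$ and the chain bounds $\cat(\zk) \leq 3$. The only substantive case is a triangulated $2$-sphere $K$ with some chordless $3$-cycle but no chordless cycle on at least four vertices; here Lemma~\ref{LCup1}(ii) gives $\filt(K) \geq 2$, and the task reduces to proving $\cat(\zk) \leq 2$.

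Any chordless $3$-cycle in such a $K$ is the boundary $\bd\sigma$ of an empty $2$-face $\sigma$, and $|\bd\sigma|$ separates $|K| \cong S^2$ into two closed $2$-disks, giving a decomposition $K = K_1' \cup_{\bd\sigma} K_2'$ where each $K_i' = K_i \backslash \sigma$ comes from a triangulation $K_i$ of $S^2$ containing $\sigma$ as a $2$-face. The key combinatorial step is to verify that each $\sk{K_i'}{1}$ is a chordal graph: an induced cycle of length $\geq 4$ on some vertex set $I$ in $\sk{K_i'}{1}$ would necessarily also be induced in $\sk{K}{1}$, because any edge of $K$ on $I$ missing from $K_i'$ would lie only in $K_{3-i}'$, forcing both of its endpoints into the vertex set of $\bd\sigma$ and hence the edge itself into $\bd\sigma \subseteq K_i'$, a contradiction; since a chord-free cycle of length $\geq 4$ supports no triangle of $K$ on its vertex set, the induced $I$-cycle would yield a chordless cycle on $\geq 4$ vertices in $K$, contradicting the hypothesis.

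With chordality of each $\sk{K_i'}{1}$ in hand, I invoke the second part of Proposition~\ref{PGluing2} with $\bar K_i = K_i'$, $K_i = \sk{K_i'}{1}$, and $C = \bd\sigma$ (which is a full subcomplex of each $K_i'$ since $\sigma$ has been deleted). Since $\z{C} = \z{\bd\Delta^2} \cong S^5$ gives $\Cat(\z{C}) = 1$, and chordal graphs are $HMF$-complexes---for any induced subgraph $G_I$ one chooses $\mc C_I$ consisting of $\beta_0(G_I) - 1$ missing edges bridging the components together with $\beta_1(G_I)$ missing triangles forming a basis of $H_1(G_I)$, which is possible precisely because the cycle space of a chordal graph is generated by triangles---so $\cat(\z{\sk{K_i'}{1}}) \leq 1$ by Proposition~\ref{PExtractible}, the bound $\cat(\zk) \leq \max\{1,1,1\} + 1 = 2$ follows. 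The main obstacle is this final subcase; within it the subtle points are the interaction between chord edges and the $\bd\sigma$-splitting and the exhibition of an $HMF$-structure on chordal graphs, while everything else is assembly of results already in place.
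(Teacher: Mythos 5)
Your proof is correct and the overall strategy---squeezing $\cat(\zk)\leq\filt(K)$ against the chain $\filt\leq\cuplen\leq\cat\leq\dim K+1$ and then reading off the cases---matches the paper. The genuine divergence is in the one substantive subcase ($d=2$, chordless $3$-cycles present but no longer chordless cycles). There the paper proceeds more directly: it observes that $\sk{K}{1}$ itself is chordal, hence an $HMF$-complex, so $\cat(\zsk{K}{1})\leq 1$ by Proposition~\ref{PExtractible}, and then invokes the skeletal filtration bound of Lemma~\ref{LSkeleta2} to get $\cat(\zk)\leq\cat(\zsk{K}{1})+1\leq 2$. You instead pick a chordless $3$-cycle $\bd\sigma$, split $K=K_1'\cup_{\bd\sigma}K_2'$ along it, verify that each $\sk{K_i'}{1}$ is chordal, and feed this through the second half of Proposition~\ref{PGluing2} (using $\Cat(\z{\bd\sigma})=\Cat(S^5)=1$). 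Your combinatorial check that chords of an induced $\geq 4$-cycle in one piece cannot hide in the other piece is correct: such a chord would have both endpoints on $\bd\sigma$ and hence be an edge of $\bd\sigma\subseteq K_i'$. The two routes buy different things: the paper's argument is shorter and avoids the decomposition, while yours is closer in spirit to the connected-sum machinery (Corollary~\ref{CConnectedSum} / Proposition~\ref{PGluing2}) that the paper later deploys for $\mc C_d$, $d\geq 3$, so it unifies the $d=2$ argument with the higher-dimensional one. One small point of care in your $HMF$ justification: ``cycle space generated by triangles'' does not in itself yield a $\mathbb Z$-basis among triangle classes (spanning sets of free abelian groups need not contain bases); the cleaner route, which the paper adopts, is the induction along a perfect elimination ordering, which produces the triangle basis explicitly via a suitable spanning tree. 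The conclusion you want is true, but that one sentence should be fleshed out or replaced by the paper's inductive argument.
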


\begin{proof} 

The case $d=0$ is immediate since $S^0$ is the only triangulated $0$-sphere, and $\z{S^0}\cong S^3$. 
By Lemma~\ref{LSkeleta2} and Corollary~\ref{CFiltCup}
$$
1\leq\filt(K)\leq\cuplen(\zk)\leq\cat(\zk)\leq d+1,
$$ 
so it remains to show that $\cat(\zk)\leq\filt(K)$. 

Fix $d=1$. Using Lemma~\ref{LCup1}, if $K$ has at least $4$ vertices, 
then $\filt(K)\geq 2$, so $\cat(\zk)\leq\filt(K)$ since $\cat(\zk)\leq d+1=2$. 
Otherwise, if $K$ has $3$-vertices, then $K=\bd\Delta^2$ and $\zk\cong S^5$, so $\filt(K)=\cat(\zk)=1$. 

Now fix $d=2$. We break the argument up into three cases: $K$ has no chordless cycles; $K$ has only
chordless cycles of length no more than $3$; and $K$ has at least one chordless cycle of length $4$ or more.
To start, by Lemma~\ref{LCup1}, $\filt(K)\geq 3$ whenever $K$ has a chordless cycle with at least $4$ vertices, 
so $\cat(\zk)\leq\filt(K)$ since $\cat(\zk)\leq d+1=3$. 
On the other hand, suppose $K$ has chordless cycles, but none with more than $3$ vertices. 
Then $\filt(K)=2$ by Lemma~\ref{LCup1}, and the $1$-skeleton $\sk{K}{1}$ is a chordal graph. 
The chordal property is closed under vertex deletion (taking full subcomplexes).
Moreover, recall from~\cite{MR0186421} that chordal graphs have a \emph{total elimination ordering}, that is,
they can be built up one vertex $v$ at a time in some order such that at each step the neighbours of $v$ form a clique. 
Inducting on this ordering, one sees that chordal graphs are $HMF$-complexes,
therefore $\cat(\zsk{K}{1})\leq 1$ by Proposition~\ref{PExtractible} (this also follows the main result in~\cite{2015arXiv150608970I}).
Using Lemma~\ref{LSkeleta2}, we have $\cat(\zk)\leq \cat(\zsk{K}{1})+1\leq 2=\filt(K)$.
Otherwise, $K=\bd\Delta^3$ when $K$ has no chordless cycles at all, so we have $\zk\cong S^7$ and $\filt(K)=\cat(\zk)=1$. 

\end{proof}

\begin{lemma}
\label{Ld012}
Let $K$ be a triangulated $d$-sphere, $d=1,2$. Then 
$$
\cat(\zsk{K}{d-1})\leq\max\{1,\cat(\zk)-1\}=\max\{1,\filt(K)-1\}.
$$
\end{lemma}

\begin{proof}
The last equality $\filt(K)=\cat(\zk)$ is from the previous proposition. Let $m:=\cat(\zk)$. 
Using Proposition~\ref{Pd012}, the statement simplifies to $\cat(\zsk{K}{0})\leq 1$ when $d=1$, or $d=2$ and $m=1$.
This is true since $\zsk{K}{0}$ has the homotopy type of a wedge of spheres by~\cite{MR2138475}, 
or by Proposition~\ref{PExtractible} as $\sk{K}{0}$ is a collection of disjoint points.
Fix $d:=2$. As in the proof of Proposition~\ref{Pd012}, $\cat(\zsk{K}{1})\leq 1<m$ when $m=2$, since $\filt(K)=2$. 
This last inequality also holds when $m=3$ since $\cat(\zsk{K}{1})\leq \dim\sk{K}{1}+1=2$ is always true by Lemma~\ref{LSkeleta2}. 
\end{proof}

\subsection{Triangulated $d$-spheres for $d\geq 3$}

\begin{proposition}
\label{Pdinfinity}
Suppose $K\in\mc C_d$, $d\geq 0$. Then 
$$
1\leq\filt(K)=\cuplen(\zk)=\cat(\zk)\leq d+1.
$$
\end{proposition}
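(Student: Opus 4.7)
The plan is to reduce the case $d\le 2$ directly to Proposition~\ref{Pd012} and to handle the two defining constructions of $\mc C_d$ separately for $d\ge 3$, using the matching additive behaviour of $\filt$, $\cuplen$, and $\cat$ under joins and connected sums to squeeze the chain $\filt(K)\le\cuplen(\zk)\le\cat(\zk)$ between matching lower and upper bounds. Well-behavedness of $K$, inherited through Lemmas~\ref{Ld012Well}, \ref{LJoinWell}, and~\ref{LCSumWell}, is needed in exactly one place, namely the first of these inequalities, supplied by Lemma~\ref{LFiltCup}.

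For a type (1) element $K=L_1\ast\cdots\ast L_k$ I would apply Proposition~\ref{Pd012} to each $L_j$ and use $\zk\cong\prod_j\z{L_j}$. The product inequality for $\cat$ gives $\cat(\zk)\le\sum_j\cat(\z{L_j})$, while pulling back top-length cup products through the projections and invoking K\"unneth (valid by torsion-freeness of each $H^*(\z{L_j})$, which follows from well-behavedness and Hochster) gives $\cuplen(\zk)\ge\sum_j\cuplen(\z{L_j})$. Lemma~\ref{LFiltJoin} supplies the missing lower bound $\filt(K)\ge\sum_j\filt(L_j)$, and the four quantities collapse to the common value $\sum_j\filt(L_j)\le\sum_j(d_j+1)=d+1$.

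For a type (2) element $K=K_1\#_{\sigma_1}\cdots\#_{\sigma_{\ell-1}}K_\ell$, with each $K_i=L_{1,i}\ast\cdots\ast L_{k_i,i}$ containing no $\bd\Delta$ factors, the lower bound is packaged directly: iterated Lemma~\ref{LFiltJoin} gives $\filt(K_i)\ge\sum_j\filt(L_{j,i})$, and iterated Lemma~\ref{LFiltCSum} (valid because the $\sigma_i$ are pairwise disjoint) yields $\filt(K)\ge\max_i\sum_j\filt(L_{j,i})$. The matching upper bound on $\cat$ is the real work: I would apply Corollary~\ref{CConnectedSum} to reduce to estimating $\cat(\zsk{K_i}{d-1})$, and then iterate Corollary~\ref{CJoinSkeleton} $k_i-1$ times on the join $K_i$ to obtain
\[
\cat(\zsk{K_i}{d-1})\;\le\;\sum_{j=1}^{k_i}\cat(\zsk{L_{j,i}}{d_{j,i}-1})+(k_i-1).
\]
The no-boundary-of-a-simplex hypothesis ensures $\filt(L_{j,i})\ge 2$, so Corollary~\ref{Cd012} bounds each skeletal summand by $\filt(L_{j,i})-1$; hence $\cat(\zsk{K_i}{d-1})\le\sum_j\filt(L_{j,i})-1$, and Corollary~\ref{CConnectedSum} then concludes $\cat(\zk)\le\max_i\sum_j\filt(L_{j,i})$. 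Iterating Lemmas~\ref{LJoinWell} and~\ref{LCSumWell} shows $K$ is well-behaved, so Lemma~\ref{LFiltCup} closes the chain of equalities, and $\max_i\sum_j\filt(L_{j,i})\le\max_i\sum_j(d_{j,i}+1)=d+1$ by the same arithmetic as in Case (1).

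The main obstacle is the bookkeeping in Case (2): each iteration of Corollary~\ref{CJoinSkeleton} introduces a $+1$ term, and these must be absorbed exactly by the $\filt(L_{j,i})-1\ge 1$ slack provided by Corollary~\ref{Cd012}. This is precisely the role played by the ``no $L_{j,i}$ is a boundary of a simplex'' clause engineered into the definition of $\mc C_d$; without it the upper and lower bounds would fail to meet. The secondary subtlety is propagating well-behavedness through the iterated join-and-connected-sum construction, which is what legitimises the single application of Lemma~\ref{LFiltCup} at the top.
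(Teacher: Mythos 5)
Your proof is correct and follows the same route as the paper's: propagate well-behavedness via Lemmas~\ref{LJoinWell} and~\ref{LCSumWell}, squeeze $\filt(K)\le\cuplen(\zk)\le\cat(\zk)\le d+1$ via Lemmas~\ref{LFiltCup} and~\ref{LSkeleta2}, then close the loop with $\cat(\zk)\le\filt(K)$ using the product inequality for joins and Corollary~\ref{CConnectedSum} together with the iterated Corollary~\ref{CJoinSkeleton} and Corollary~\ref{Cd012} estimate for connected sums. The one cosmetic difference is that you additionally establish $\cuplen(\zk)\ge\sum_j\cuplen(\z{L_j})$ by a K\"unneth argument; the paper skips this, since once $\cat(\zk)\le\filt(K)$ is shown, the chain $\filt(K)\le\cuplen(\zk)\le\cat(\zk)\le\filt(K)$ already forces all three to be equal, so the extra K\"unneth step is harmless but redundant.
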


\begin{proof}
By Lemmas~\ref{LSkeleta2} and Corollary~\ref{CFiltCup},
$$
1\leq\filt(K)\leq\cuplen(\zk)\leq\cat(\zk)\leq d+1.
$$ 
It remains to show that $\cat(\zk)\leq\filt(K)$. The $d=0,1,2$ case is Proposition~\ref{Pd012}.

Suppose $K=L_1\ast\cdots\ast L_k\in \mc C_d$ for some $L_i\in\mc C_{d_i}$, $d_i\leq 2$, and $d_1+\cdots+d_k=d-k-1$.
Then $\zk=\z{L_1}\times\cdots\times\z{L_k}$, and so using Proposition~\ref{Pd012} and iterating Lemma~\ref{LFiltJoin},
$$
\cat(\zk)\leq \csum{i=1,\ldots,k}{}{\cat(\z{L_i})} = \csum{i=1,\ldots,k}{}{\filt(L_i)} \leq \filt(L_1\ast\cdots\ast L_k)=\filt(K).
$$ 
Moreover, iterating Corollary~\ref{CJoinSkeleton}, and using Lemma~\ref{Ld012}, when each $L_i\neq S^0$
$$
\cat(\zsk{K}{d-1})\leq \csum{i=1,\ldots,k}{}{\cat(\zsk{L_i}{d_i-1})}+k-1\leq\csum{i=1,\ldots,k}{}{\max\{1,\filt(L_i)-1\}}+k-1
$$
and when each $L_i\neq\bd\Delta^{d_i+1}$, we have $\filt(L_i)>1$, therefore 
\begin{equation}
\label{ESk}
\cat(\zsk{K}{d-1})\leq \paren{\csum{i=1,\ldots,k}{}{\filt(L_i)}}-1\leq \filt(L_1\ast\cdots\ast L_k)-1=\filt(K)-1
\end{equation}
the second inequality by iterating Lemma~\ref{LFiltJoin}.

Suppose $K=K_1\#_{\sigma_1}\cdots\#_{\sigma_{\ell-1}}K_\ell$ where each $K_i=L_{1,i}\ast\cdots\ast L_{k_i,i}$ 
is a join of the above form such that each $L_{j,i}$ is not the boundary of a simplex,
and $\sigma_i$ is a $d$-face common to $K_i$ and $K_{i+1}$ with $\sigma_i\cap\sigma_j=\emptyset$ when $i\neq j$.
By Corollary~\ref{CConnectedSum} and inequality~\eqref{ESk}, we have 
\begin{align*}
\cat(\zk) & \leq \max\{1,\cat(\zsk{K_1}{d-1}),\ldots,\cat(\zsk{K_\ell}{d-1})\}+1\\
& \leq \max\{1,\filt(K_1)-1,\ldots,\filt(K_\ell)-1\}+1\\
& = \max\{2,\filt(K_1),\ldots,\filt(K_\ell)\}\\
& = \filt(K_1\#_{\sigma_1}\cdots\#_{\sigma_{\ell-1}}K_\ell)\\
& = \filt(K)
\end{align*}
where the second last inequality follows from iterating Lemma~\ref{LFiltCSum}.

\end{proof}

\subsection{Proof of Theorem~\ref{TMain}}

The following is an immediate consequence of Theorem~$4.4$ in~\cite{MR1644063} and the fact that 
\emph{category weight}, $cwgt$ as defined there is bounded below by $1$, and linearly below with respect to cup products.

\begin{theorem}[Rudyak~\cite{MR1644063}]
If $\cat(X)\leq m$, then
\begin{itemize}
\item[(1)] $\cuplen(X)\leq m$;
\item[(2)] Massey products $\vbr{v_1,\ldots,v_k}$ vanish in $H^*(X)$ 
whenever $v_i=a_1\cdots a_{m_i}$ and $v_j=b_1\cdots b_{m_j}$, and $m_i+m_j>m$, for some odd $i$ and even $j$ 
and $a_s,b_t\in H^+(X)$.~$\qqed$
\end{itemize}
\end{theorem}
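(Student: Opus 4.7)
The plan is to derive both parts from Rudyak's \emph{category weight} invariant, which I will write as $\mathrm{cwgt}(x)$ for a class $x \in H^*(X)$. I would begin by recalling the three salient properties established in~\cite{MR1644063}: (i) $\mathrm{cwgt}(x) \geq 1$ for any nonzero $x \in H^+(X)$; (ii) the multiplicative estimate $\mathrm{cwgt}(xy) \geq \mathrm{cwgt}(x) + \mathrm{cwgt}(y)$; and (iii) the obstruction bound $\mathrm{cwgt}(x) \leq \cat(X)$ whenever $x$ is nonzero. These are exactly the two facts flagged in the paper (cwgt bounded below by $1$ and linearly below with respect to cup products), together with the defining comparison to $\cat(X)$ that makes cwgt useful as a vanishing criterion.

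For part $(1)$, I would take any length $m+1$ cup product $x_1\cdots x_{m+1}$ of classes in $H^+(X)$. Iterating property (ii) and invoking (i) on each factor gives $\mathrm{cwgt}(x_1\cdots x_{m+1}) \geq m+1 > m \geq \cat(X)$, which by (iii) forces the product to vanish. Hence $\cuplen(X) \leq m$.

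For part $(2)$, the key input is Theorem~$4.4$ of~\cite{MR1644063}, which provides a lower bound of the form $\mathrm{cwgt}(\vbr{v_1,\ldots,v_k}) \geq \mathrm{cwgt}(v_i) + \mathrm{cwgt}(v_j)$ for some odd $i$ and even $j$, obtained by analyzing defining systems for matric Massey products. Given the decomposability hypothesis $v_i = a_1\cdots a_{m_i}$ and $v_j = b_1\cdots b_{m_j}$ with $a_s, b_t \in H^+(X)$, applying property (ii) iteratively yields $\mathrm{cwgt}(v_i) \geq m_i$ and $\mathrm{cwgt}(v_j) \geq m_j$. Combining these estimates gives
$$\mathrm{cwgt}(\vbr{v_1,\ldots,v_k}) \;\geq\; m_i + m_j \;>\; m \;\geq\; \cat(X),$$
so (iii) forces $\vbr{v_1,\ldots,v_k} = 0$ in $H^*(X)$.

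The main obstacle is purely bibliographic: one must check that Rudyak's Theorem~$4.4$ pairs an \emph{odd} index with an \emph{even} index (as opposed to, say, two consecutive indices or a single distinguished one), so that the hypothesis ``$m_i + m_j > m$ for some odd $i$ and even $j$'' in the statement is precisely the one triggered by the category weight inequality. Once that indexing convention is confirmed, both conclusions reduce to the elementary formal properties (i)--(iii) above, and no further machinery is required.
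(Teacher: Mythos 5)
Your proposal reconstructs exactly the argument the paper gestures at: the paper's own ``proof'' is the single remark that the theorem is an immediate consequence of Rudyak's Theorem~4.4 together with the two facts that category weight is bounded below by $1$ and subadditive under cup products, and you have correctly fleshed out how those properties (i)--(iii) yield both conclusions. The one caveat you flag at the end --- that Rudyak's Theorem~4.4 really does pair an odd index with an even index, and that the inequality $\mathrm{cwgt}(\vbr{v_1,\ldots,v_k})\geq\mathrm{cwgt}(v_i)+\mathrm{cwgt}(v_j)$ applies to the particular $(i,j)$ supplied by the hypothesis rather than to some other pair chosen by the theorem --- is the right thing to check, and it is precisely what the paper's phrasing of condition~(2) presupposes.
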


Then by our remarks in Section~\ref{SHochster}, and by definition $\cuplen(X)=\Nill H^*(X)-1$, we have a condition for $K$ to be 
$m$-annihilating.

\begin{proposition}
\label{PRudyak}
If $\cat(\zk)\leq m$, then $K$ is $m$-annihilating.~$\qqed$ 
\end{proposition}

Now Theorem~\ref{TMain} follows from Propositions~\ref{Pd012} and~\ref{Pdinfinity}, 
and the fact that, $\cuplen(\zk)\leq m$ when $K$ is $m$-annihilating.

\subsection{Triangulated surfaces}

We extend the characterization of the $LS$-category of $2$-spheres to closed connected $2$-dimensional oriented surfaces.
As before, the algebraic invariant that distinquishes between them is cup product length, though the combinatorial characerisation diverges. 
The main difference is that spherical filtration length does not play a part for surfaces as it did for spheres since the full subcomplex
condition is a bit too strict here. Also, the different values for
the $LS$-category do not pigeonhole nicely in terms of length of chordless cycles. Note the case of $LS$-category $1$ has already been
characterized in~\cite{MR3770008}.

\begin{proposition} 
\label{catsurface}
Let $K$ be a triangulation of a closed oriented connected surface. Then 
$$
1\leq\cuplen(\zk)=\cat(\zk)\leq 3.
$$
Moreover, letting $m:=\cat(\zk)$, we have:\\
$(i)$ $m=1$ iff $K$ has no chordless cycles (equivalently $K=\bd\Delta^3$)\\
$(ii)$ $m=2$ iff $K$ has a chordless cycle, but none with more than $3$ vertices\\
$(iii)$ $m=3$ iff $K$ has a chordless cycle with at least $4$ vertices.

\end{proposition}

\begin{proof} 

For any vertex $v$, let $D_v$ be the fan of $2$-faces that contain $v$, and $\bd D_v$ be its  boundary, which consists of all vertices adjacent to $v$. 
To see that $D_v$ is a triangulation of a $2$-disk, notice that a small neighbourhood of $v$ in $|K|$ is homeomorphic to a $2$-disk since $|K|$ is a closed 
$2$-manifold, so the interior of $|D_v|$ is an open $2$-disk. At the same time the boundary $\bd D_v$ has to be a triangulated circle, otherwise we would 
have a pair of distinct edges between the same pair of vertices, namely, vertex $v$ and a vertex $w$ in $\bd D_v$, which contradicts $K$ being a triangulation. 

Now we get to the heart of the proof.
As in the proof of Proposition~\ref{Pd012}, we break the argument up into three cases: (1) $K$ has no chordless cycles of any length; 
(2) $K$ has chordless cycles of length no more than $3$; and (3) $K$ has at least one chordless cycle of length $4$ or more.

The first difficulty extending to surfaces beyond $2$-spheres is in case (3). Unlike the situation for $2$-spheres, we cannot use a chordless cycle 
$C$ of length $\geq 4$ to obtain a length $3$ spherical filtration since $C$ might not always bound a $2$-disk in $K$. 
We search for filtrations derived from $C$ instead. Taking $v$ to be a vertex on a chordless cycle $C$ of length 
$\geq 4$, since the vertices $w_0$ and $w_1$ that are adjacent to $v$ in $C$ are in $\bd D_v$, we have a length $3$ filtration of $K$ in terms of
triangulated spheres
$$
E:=\{w_0,w_1\}\subsetneq \bd D_v\subsetneq K
$$
where $|E|\cong S^0$ , $|\bd D_v|\cong S^1$, and $|\bd D_v|$ bounds the $2$-disk $|D_v|$. Also, $E$ is a full subcomplex of $K$ since there 
are no edges between $w_0$ and $w_1$ in $C$ since they are both adjacent to $v$ in $C$, and $C$ is chordless with at least $4$ vertices. So there are no edges between them in $K$ as well since $C$ is a full subcomplex of $K$ by definition of chordless cycle.
This then is almost a spherical filtration of length $3$; the only thing missing is that $\bd D_v$ might not be a full subcomplex of $K$. 
However, $\bd D_v$ satisfies the hypothesis of Lemma~\ref{LFundClass2}. In particular, $K\sm \bd D_v$ has two disconnected components:
one containing $v$, and the other one containing another vertex $w_3$ in $C$ besides $v$, $w_0$, or $w_1$ which must be disconnected 
from $v$ in $K$ since only $w_1$ and $w_2$ are adjacent to $v$ in the full subcomplex $C$. Then by the remarks preceding 
Lemma~\ref{LFundClass2}, $3\leq\cuplen(\zk)$. Namely we have a length $3$ cup product induced by the composite of inclusions
$$
\seqmm{K}{}{\bd D_v\ast (K\sm\bd D_v)}{}{E\ast (\bd D_v\sm E)\ast (K\sm\bd D_v)}.
$$
Since $\cuplen(\zk)\leq \cat(\zk)\leq 3$ by Lemma~\ref{LSkeleta2}, we are done.

The difficulty in case (2) is similar. We have $\cat(\zk)\leq \cat(\zsk{K}{1})+1\leq 2$ by the same argument as in Proposition~\ref{Pd012},
but again we cannot choose anly length $3$ chordless cycle to form a length $2$ spherical filtration. Instead, as in case (3), we look at 
$D_v$ for some vertex $v$ in a chordless cycle $C$ of length $3$. Assume $K$ is not a triangulated $2$-sphere since that is addressed in Proposition~\ref{Pd012}.
Note $\bd D_v$ must have a chordless cycle. If it did not then $\bd D_v$ would bound a triangulated $2$-disk in $D$ on the same vertex set as $\bd D_v$,
and then $|K|$ would have to be a $2$-sphere since $|D|\cup |D_v|$ is a $2$-sphere. Also, since $\bd D_v$ must have chordless cycles, then $\bd D_v$ cannot 
have every vertex in $K$ beside $v$. Otherwise, embedding $|K|$ in $\mb R^3$, we would have a path from $v$ through the interior of $|K|$
and outside $|K|$ through this chordless cycle, contradicting that $|K|$ is a closed surface. Then $\bd D_v$ satisfies the hypothesis of Lemma~\ref{LFundClass2},
and the filtration $\bd D_v\subsetneq K$ gives a length $2$ cup product similarly as case (3). Thus $2\leq\cuplen(\zk)\leq \cat(\zk)$ and we are done.

Case (1) is similar as for spheres. For any $v$, $\bd D_v$ must have at least $3$ vertices. In case (1) it cannot have more than $3$ as follows.
Suppose it had $4$ or more vertices. Since we assume $K$ has no chordless cycles, then the cycle $\bd D_v$ must not be a full subcomplex. 
Namely, there must be an edge $\{a,b\}$ that is not in $\bd D_v$ for two non-adjacent vertices $a,b\in \bd D_v$. But then $(a,b,v)$ is a length 
$3$ chordless cycle since $\sigma:=\{a,b,v\}$ cannot be a $2$-face of $K$. If it was a face in $K$ then $K$ would not be a surface since $\{b,v\}$ 
would be adjacent to three $2$-faces: $\sigma$ and the other two in the triangulated $2$-disk $D_v$ distinct from $\sigma$ since $\sigma\notin D_v$.
Now since $\bd D_v$ has $3$ vertices, and $\bd D_v$ cannot be a length $3$ chordless cycle, that is, cannot be a full subcomplex, it must be 
the boundary of a $2$-face $\delta$, so $\delta$ and $D_v$ form the boundary of a $3$-simplex in $K$, implying $K$ itself must be this boundary.
\end{proof}

\section{Further Applications}

\subsection{Fullerenes and Pogorelov polytopes}
A \emph{fullerene} $P$ is a simple $3$-polytope all of whose $2$-faces are pentagons and hexagons. 
These are mathematical idealisations of physical fullerenes - 
spherical molecules of carbon such that each carbon atom belongs to three carbon rings, and each carbon ring is either a pentagon or hexagon.

The authors in~\cite{2015arXiv151103624F} have shown that the cohomology ring of moment-angle complexes is a complete combinatorial invariant of fullerenes,
while Buchstaber and Erokhovets~\cite{MR3706860,MR3486778} show that the finer details of their cohomology encode many interesting properties of fullerenes.
For example, if $P^\ast$ is the dual of $P$, then the bigraded Betti numbers of $\z{\bd P^\ast}$ count the number $k$-\emph{belts} in $P$. 
Here, a $k$-belt of a simple polytope such as $P$ is a sequence of $2$-faces $(F_1,\ldots,F_k)$ such that
$F_k\cap F_1\neq\emptyset$, $F_i\cap F_{i+1}\neq\emptyset$ for $1\leq i\leq k-1$, and all other intersections are empty.
Notice that the $k$-belts of $P$ correspond to full subcomplexes of $\bd P^\ast$ that are $k$-gons.  
But since fullerenes can have no $3$-belts~\cite{MR3706860,MR3486778}, $\bd P^\ast$ must only have $n$-gons as full subcomplexes for $n\geq 4$.
Moreover, since $\bd P^\ast$ is a triangulated $2$-sphere that is not a boundary of the $2$-simplex, it must have at least one such $n$-gon as a full subcomplex. 
Thus, $\filt(\bd P^\ast)=3$, and by Theorem~\ref{TMain}, we can determine certain Massey products in $H^*(\z{\bd P^\ast})$.
\begin{theorem}
For fullerenes $P$, $\cat(\z{\bd P^\ast})=3$ and $\bd P^\ast$ is $3$-annihilating.
In particular, all Massey products consisting of decomposable elements in $H^+(\z{\bd P^\ast})$ must vanish.~$\qqed$
\end{theorem}

Pogorelov polytopes~\cite{MR0208458} can be seen as a generalisation of fullerenes. A Pogorelov polytope is a combinatorial simple 3-polytope realisable in the 
Lobachevsky (hyperbolic) space as a bounded right-angled polytope. These polytopes are exactly simple 3-polytopes with cyclically 5-edge connected graphs. 
A Pogorelov polytope has no 3 and 4-gons and may have any prescribed numbers of $k$-gons, $k \geq 7$. Any simple polytope with only 5-, 6 and at most 
one 7-gon is Pogorelov. 

Since Pogorelov polytopes $P$ have no 3 and 4-belts of facets, $\bd P^*$ must only have $n$-gons as full subcomplexes for $n\geq 5$. Now following the 
same proof as in the case of fullerenes, we have the following statement.
\begin{theorem}
For Pogorelov polytopes $P$, $\cat(\z{\bd P^\ast})=3$ and $\bd P^\ast$ is $3$-annihilating.
In particular, all Massey products consisting of decomposable elements in $H^+(\z{\bd P^\ast})$ must vanish.~$\qqed$
\end{theorem}


\subsection{Neighbourly complexes}

For any finite simply-connected $CW$-complex $X$, let 
$$
\hd(X):=\max\br{\,\max\cset{i}{\tilde H^i(X)\otimes\mb Q\neq 0},\,\max\cset{i}{\mathrm{Torsion}(\tilde H^{i-1}(X))\neq 0}\,}
$$ 
and 
$$
\hc(X):=\min\cset{i}{\tilde H^{i+1}(X)\neq 0}.
$$
These coincide with the dimension and connectivity of $X$ up to homotopy equivalence.
It is well known (c.f.~\cite{MR516214}) that $X$ satisfies 
\begin{equation}
\label{EOld}
\cat(X)\leq \frac{\hd(X)}{\hc(X)+1}.
\end{equation} 

A version of the Hochster formula also holds for real moment-angle complexes, namely,
\begin{equation}
H^*(\rzk)\cong\bigoplus_{I\subseteq[n]}\tilde H^*(\Sigma|K_I|). 
\end{equation} 
Thus, 
$$
\hd(\rzk)=1+\max\cset{\,\hd(|K_I|)}{I\subseteq[n]\,}\,\leq\, 1+\dim K
$$
and
$$
\hc(\rzk)=1+\min\cset{\,\hc(|K_I|)}{I\subseteq[n]\,},
$$
and using the inequality $\cat(\zk)\leq\cat(\rzk)$ from Corollary~\ref{CRealMAC}, 
\begin{proposition}
\label{PImproved}
It holds that 
$$\cat(\zk)\leq \frac{\hd(\rzk)}{\hc(\rzk)}.$$ \qed
\end{proposition}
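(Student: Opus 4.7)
The plan is to chain two inequalities already established earlier in the excerpt. Specifically, Corollary~\ref{CRealMAC} gives $\cat(\zk) \le \cat(\rzk)$, and the classical inequality~\eqref{EOld} applied with $X = \rzk$ gives $\cat(\rzk) \le \hd(\rzk)/(\hc(\rzk)+1)$. Combining these yields
$$
\cat(\zk) \;\le\; \cat(\rzk) \;\le\; \frac{\hd(\rzk)}{\hc(\rzk)+1} \;\le\; \frac{\hd(\rzk)}{\hc(\rzk)},
$$
the last inequality being the trivial observation that $\hc(\rzk)+1 > \hc(\rzk)$, valid whenever $\hc(\rzk) \ge 1$ so that the displayed bound is non-vacuous.

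There is essentially nothing else to compute: the cohomology formula $H^*(\rzk) \cong \bigoplus_I \tilde H^*(\Sigma|K_I|)$ quoted immediately before the proposition already expresses $\hd(\rzk)$ and $\hc(\rzk)$ in terms of the full subcomplexes $K_I$, and Corollary~\ref{CRealMAC} has been proved. The proof reduces to one line of bookkeeping once the two inputs are in place.

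The only genuine subtlety I anticipate is that~\eqref{EOld} is stated for simply-connected CW complexes, whereas $\rzk$ need not be simply-connected in general. However, in the range where the proposition has content, $\hc(\rzk) \ge 1$ forces every $|K_I|$ with $|I| = 2$ to be path-connected, which amounts to $K$ containing every edge; under this $2$-neighbourliness hypothesis $\rzk$ is simply-connected, so~\eqref{EOld} applies. Outside this range the bound is either vacuous or undefined, so no additional verification is needed. I expect this small simply-connectedness check to be the only real step one has to think about; everything else is formal.
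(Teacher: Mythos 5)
Your argument is correct and is exactly the one the paper relies on: combine $\cat(\zk)\le\cat(\rzk)$ from Corollary~\ref{CRealMAC} with the classical bound~\eqref{EOld} applied to $X=\rzk$. You also correctly flag and resolve a subtlety the paper passes over silently: \eqref{EOld} is stated for simply-connected complexes, and $\rzk$ is simply-connected only when $K$ is $2$-neighbourly. Your reasoning that $\hc(\rzk)\ge 1$ forces $\tilde H^0(K_I)=0$ for all $|I|=2$ (hence $K$ $2$-neighbourly), so that $\rzsk{K}{1}$ is the $2$-skeleton of a cube and attaching the remaining cells of dimension $\ge 3$ leaves $\pi_1(\rzk)=0$, is sound. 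One further remark worth making: the chain you write in fact establishes the sharper bound $\cat(\zk)\le \hd(\rzk)/(\hc(\rzk)+1)$, and this sharper form is what the paper actually needs for the ensuing $k$-neighbourly theorem, since $\hc(\rzk)\ge k-1$ there yields $(1+\dim K)/k$ only with the $+1$ in the denominator; the stated form of Proposition~\ref{PImproved}, with $\hc(\rzk)$ alone, is weaker and appears to be a slip.
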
 
Comparing the Hochster formula for $H^*(\rzk)$ to the Hochster formula for $H^*(\zk)$ in Section~\ref{SHochster}, 
one sees that the inequality $\frac{\hd(\rzk)}{\hc(\rzk)}\leq \frac{\hd(\zk)}{\hc(\zk)}$ usually holds, 
with the disparity between these two often being very large. In such case, the bound in Proposition~\ref{PImproved}
is an improvement over what one would get by applying~\eqref{EOld} directly to $X=\zk$.

Consider, for instance, the case of $k$-neighbourly complexes. 
A simplicial complex $K$ on vertex set $[n]$ is said to be $k$-\emph{neighbourly} if every subset of $k$ or less vertices in $[n]$ is a face of $K$.
In this case $H_i(K_I)=0$ for $i\leq k-2$ and each $I\subseteq[n]$, so $\hc(\rzk)\geq k-1$. Therefore
we have the following result.

\begin{theorem}
If $K$ is $k$-neighbourly, $\cat(\zk)\leq \frac{1+\dim K}{k}$. In particular, $K$ is $\paren{\frac{1+\dim K}{k}}$-annihilating.~$\qqed$
\end{theorem}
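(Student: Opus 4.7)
The plan is to reduce the category bound to Proposition~\ref{PImproved} together with the classical inequality $\cat(X)\leq\hd(X)/(\hc(X)+1)$ applied to $X=\rzk$, and then to deduce the Golod conclusion directly from Proposition~\ref{PRudyak}. The only substantive calculation left is to bound $\hc(\rzk)$ from below in terms of $k$.

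Both required estimates on $\rzk$ flow from the real-coefficient Hochster formula $H^*(\rzk)\cong\bigoplus_{I\subseteq[n]}\tilde H^*(\Sigma|K_I|)$. The upper bound $\hd(\rzk)\leq 1+\dim K$ is recorded already just before Proposition~\ref{PImproved}. For the lower bound on $\hc(\rzk)$, I would first observe that $k$-neighbourliness descends to every full subcomplex: every subset of $I\subseteq[n]$ of size at most $k$ is a face of $K$, hence of $K_I$. Therefore $\sk{K_I}{k-1}$ equals the full $(k-1)$-skeleton $\sk{\Delta^{|I|-1}}{k-1}$ of the simplex on $I$, which is $(k-2)$-connected. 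Since $\tilde H_j(K_I)$ depends only on the $(j+1)$-skeleton of $K_I$, this forces $\tilde H_j(K_I)=0$ for every $j\leq k-2$ and every $I$, and hence $\hc(|K_I|)\geq k-2$. Feeding this into the Hochster formula yields $\hc(\rzk)=1+\min_I\hc(|K_I|)\geq k-1$.

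Combining these bounds with Corollary~\ref{CRealMAC}, which provides $\cat(\zk)\leq\cat(\rzk)$, then gives
\[
\cat(\zk)\leq\cat(\rzk)\leq\frac{\hd(\rzk)}{\hc(\rzk)+1}\leq\frac{1+\dim K}{k}.
\]
The ``In particular'' clause follows at once from Proposition~\ref{PRudyak}: $\cat(\zk)\leq m$ implies $K$ is $m$-Golod. The main thing to be careful about is the interplay of the two ``$+1$'' shifts, one from the suspension in the Hochster formula for $\rzk$ and one from the classical category-versus-connectivity bound; together they convert the skeletal connectivity $k-2$ of $\sk{\Delta^{|I|-1}}{k-1}$ into denominator $k$ in the final bound, rather than $k-1$ or $k+1$.
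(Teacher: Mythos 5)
Your proof is correct and follows the paper's approach almost verbatim: bound $\cat(\zk)$ by $\cat(\rzk)$ via Corollary~\ref{CRealMAC}, then apply the classical dimension-over-connectivity bound to $\rzk$, with the real-coefficient Hochster formula supplying $\hd(\rzk)\leq 1+\dim K$ and $\hc(\rzk)\geq k-1$, and finally invoke Proposition~\ref{PRudyak} for the Golod conclusion. Your skeleton argument fills in the detail that the paper leaves to the reader when it asserts $H_i(K_I)=0$ for $i\leq k-2$, and it is correct.

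One small but genuine point worth flagging: you are right to bypass Proposition~\ref{PImproved} as literally stated and use $\cat(\rzk)\leq\hd(\rzk)/(\hc(\rzk)+1)$ directly. As printed, Proposition~\ref{PImproved} has $\hc(\rzk)$ rather than $\hc(\rzk)+1$ in the denominator, which is weaker than what its own derivation (Corollary~\ref{CRealMAC} plus~\eqref{EOld}) gives, and which would only yield denominator $k-1$ in the theorem. The theorem as stated, with denominator $k$, requires the sharper form with $\hc(\rzk)+1$, exactly as in your chain of inequalities. So your bookkeeping of the two ``$+1$'' shifts is the correct reading of the argument.
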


\begin{example}
Suppose $K$ is $\ceil{\frac{n}{2}}$-neighbourly. 
If $K\neq\Delta^{n-1}$, then $\dim K\leq n-2$.
Thus, $\cat(\zk)\leq 1$ ($\zk$ is a $co$-$H$-space) and $K$ is $1$-annihilating.
\end{example}

\subsection{Simplicial wedges}

We recall the \emph{simplicial wedge} construction defined in~\cite{MR593648,MR3426378}. 
Let $K$ be a simplicial complex on vertex set $\{v_1,...,v_n\}$, and for any face $\sigma\in K$, 
define the \emph{link} of $\sigma$ in $K$ the subcomplex of $K$ given by
$$
\link_K(\sigma)=\cset{\tau\in K}{\tau\cap\sigma=\emptyset,\,\tau\cup\sigma\in K}.
$$
By \emph{doubling} a vertex $v_i$ in $K$, we obtain a new simplicial complex $K(v_i)$ on vertex set 
$$
\{v_1,\ldots,v_{i-1},v_{i1},v_{i2},v_{i+1},\ldots,v_n\}
$$ 
defined by
$$
K(v_i) = (v_{i1},v_{i2})\ast \link_K(v_i)\cup_{\{v_{i1},v_{i2}\}\ast\link_K(v_i)} \{v_{i1},v_{i2}\}\ast K\sm\{v_i\},
$$
where $(v_{i1},v_{i2})$ is the $1$-simplex with vertices $\{v_{i1},v_{i2}\}$.
One can of course iterate this construction by reapplying the doubling operation to successive complexes,
and the order of vertices on which this is done is irrelevant. 
To this end, taking any sequence $J=(j_1,\ldots,j_n)$ of non-negative integers, 
let $u_j$ be the $j^{th}$ vertex in the sequence $v_1,v_{12},\ldots,v_{1j_1},v_2,\ldots,v_n,v_{n2},\ldots,v_{nj_n}$
and $N=j_1+\cdots+j_n$, and define 
$$
K(J)=K_N
$$ 
where $K_{j+1}=K_j(u_{j+1})$ and $K_0=K$.
In algebraic terms, the Stanley-Reisner ideal of $K(J)$ is obtained from the Stanley-Reisner ideal of $K$ by replacing each vertex $v_i$ 
by $v_{i1},v_{i2},\ldots,v_{ij_i}$ in each monomial. 
This construction arises in combinatorics (see~\cite{MR593648}) and has the important property that if $K$ is the boundary of the dual of $d$-polytope, 
then $K(J)$ is the boundary of the dual of a $(d+N)$-polytope.

\begin{theorem}
\label{TSW} For any $J$, $\cat(\z{K(J)})\leq \cat(\zk)$.
\end{theorem}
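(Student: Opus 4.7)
The plan is to combine a standard polyhedral product description of $\z{K(J)}$ with Proposition~\ref{PCoordSusp}, which already controls $\cat$ under coordinate-wise suspension of the defining pairs.

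First I would reduce to the case of a single vertex doubling. Since $K(J)$ is obtained from $K$ by iterating the doubling operation $K\mapsto K(v)$, it suffices to show $\cat(\z{K(v_i)})\le \cat(\zk)$. For this I would establish the identification
\[
\z{K(v_i)} \;\cong\; \mc T^K,
\]
where $\mc T$ has the pair $(D^4,S^3)$ at the $i$-th coordinate and $(D^2,S^1)$ at every other coordinate, with polyhedral product taken over the \emph{original} complex $K$. This can be verified directly from the defining pushout $K(v_i)=(v_{i1}v_{i2})\ast\link_K(v_i)\cup_{\{v_{i1},v_{i2}\}\ast\link_K(v_i)}\{v_{i1},v_{i2}\}\ast(K\backslash\{v_i\})$ together with the definition of the polyhedral product; it also appears in work of Bahri--Bendersky--Cohen--Gitler. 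Iterating the single doubling identification, $\z{K(J)}$ becomes homeomorphic to the polyhedral product $((D^{2j_1},S^{2j_1-1}),\ldots,(D^{2j_n},S^{2j_n-1}))^K$.

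Next I would pass to suspended pairs in order to invoke Proposition~\ref{PCoordSusp}. For each $i$ the pair $(D^{2j_i},S^{2j_i-1})$ is homotopy equivalent as a based CW-pair to $(\Sigma^{2(j_i-1)}D^2,\, \Sigma^{2(j_i-1)}S^1)$: both total spaces are contractible, both subspaces are $S^{2j_i-1}$, and both inclusions are nullhomotopic. Since the polyhedral product functor preserves such equivalences of pairs, we obtain $\z{K(J)}\simeq \mc T'^K$ where $\mc T'$ is obtained from $\mc S := ((D^2,S^1),\ldots,(D^2,S^1))$ by suspending the $i$-th pair $2(j_i-1)$ times. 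All hypotheses of Proposition~\ref{PCoordSusp} are met since $D^2$ is connected and basepointed and $K$ has no ghost vertices, giving
\[
\cat(\z{K(J)}) \;=\; \cat(\mc T'^K) \;\le\; \cat(\mc S^K) \;=\; \cat(\zk).
\]

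The main obstacle is the first step, the polyhedral product description of $\z{K(v_i)}$. The computation is elementary but requires carefully unwinding the definitions of $K(v_i)$ and checking that the subspace of $(D^2)^{\times(n+1)}$ carved out by the faces of $K(v_i)$ is precisely the subspace of $(D^2)^{\times(n-1)}\times(D^4)$ carved out by the faces of $K$ (with $D^4=D^2\times D^2$ and $S^3=\partial(D^2\times D^2)$ at the doubled coordinate). Once this is in hand, the theorem follows cleanly from Proposition~\ref{PCoordSusp} with no further work.
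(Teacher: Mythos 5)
Your proposal is correct and takes essentially the same route as the paper. The paper's proof cites \cite{MR3426378,MR3084441} for the homeomorphism $\z{K(J)}\cong(D^J,S^J)^K$ (with pairs $(D^{2j_i+2},S^{2j_i+1})$ in its indexing convention, where $j_i\geq 0$ counts the number of doublings) and then applies Proposition~\ref{PCoordSusp} directly, since $(D^{2j_i+2},S^{2j_i+1})$ is \emph{homeomorphic} as a pair to $(\Sigma^{2j_i}D^2,\Sigma^{2j_i}S^1)$. You reconstruct the polyhedral product identification from scratch via iterated single doublings rather than citing it, which is fine, and your indexing ($j_i\geq 1$ as multiplicity) is just shifted by one. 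One small imprecision worth flagging: you describe $(D^{2j_i},S^{2j_i-1})\simeq(\Sigma^{2(j_i-1)}D^2,\Sigma^{2(j_i-1)}S^1)$ as merely a homotopy equivalence of pairs and then appeal to the polyhedral product preserving such equivalences. That step works but is unnecessary --- these pairs are in fact \emph{homeomorphic} (reduced suspension of the pair $(D^2,\partial D^2)$ gives $(D^3,\partial D^3)$, iterate), so Proposition~\ref{PCoordSusp} applies verbatim with no intermediate homotopy invariance argument, exactly as the paper does it.
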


\begin{proof}
Let $(D^J,S^J)$ be the sequence of pairs $((D^{2j_1+2},S^{2j_1+1}),\ldots,(D^{2j_n+2},S^{2j_n+1}))$.
By~\cite{MR3426378,MR3084441}, there is a homeomorphism
$$
\z{K(J)}\,=\,(D^2,S^1)^{K(J)}\,\cong\,(D^J,S^J)^K,
$$
and by Proposition~\ref{PCoordSusp}, $\cat((D^J,S^J)^K)\leq \cat((D^2,S^1)^K)=\cat(\zk)$.
\end{proof}

This result becomes algebraically useful when a good bound on $\cat(\zk)$ is known.
For instance, there are many examples of complexes $K$ for which $\cat(\zk)=1$, 
duals of sequential Cohen Macaulay and shellable complexes, and chordal flag complexes to name a few~\cite{MR3461047,2013arXiv1306.6221I}.
In each of these examples $\cat(\z{K(J)})\leq 1$, so $K(J)$ is Golod. 
Generally, $K(J)$ is at least $(1+\dim K)$-annihilating since $\cat(\zk)\leq 1+\dim K$, 
even though $\dim K(J)-\dim K$ can be arbitrarily large.

Notice $K(J)$ is a triangulation of a $(d+N)$-sphere whenever $K$ is a triangulation of a $d$-sphere.
Combining Theorem~\ref{TSW} and Proposition~\ref{PRudyak},
the range of spheres for which Theorem~\ref{TMain} holds generalises as follows.

\begin{corollary}
Let $K$ be be any triangulated $d$-sphere for $d=0,1,2$, or $K\in \mc C_d$ when $d\geq 3$,
and let $m:=\filt(K)$ (equivalently $m=\cuplen(\zk))$. Then $\cat(\z{K(J)})\leq m$ and $K(J)$ is $m$-annihilating.~$\qqed$
\end{corollary}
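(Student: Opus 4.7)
The corollary is essentially an assembly of three results already established in the excerpt, so my plan is to chain them in the right order rather than to do anything new.

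First, I would identify the input from the theorem about well-behaved triangulated spheres. For any $K$ in the stated class (triangulated $d$-sphere with $d\leq 2$, or $K\in\mc C_d$ for $d\geq 3$), Propositions~\ref{Pd012} and~\ref{Pdinfinity} together give the chain of equalities
\[
\filt(K)=\cuplen(\zk)=\cat(\zk),
\]
so the definition $m=\filt(K)$ is equivalent to $m=\cuplen(\zk)$ as well as to $m=\cat(\zk)$. This justifies the parenthetical remark in the statement and, more importantly, converts the combinatorial/cohomological datum $m$ into a statement about the Lusternik--Schnirelmann category of $\zk$.

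Next, I would apply the simplicial wedge bound. Theorem~\ref{TSW} asserts $\cat(\z{K(J)})\leq\cat(\zk)$ for any sequence $J=(j_1,\ldots,j_n)$ of non-negative integers. Combined with the previous step this yields
\[
\cat(\z{K(J)})\leq\cat(\zk)=m,
\]
which is the first assertion of the corollary.

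Finally, to deduce the Golodness statement, I would invoke Proposition~\ref{PRudyak}, which says that $\cat(\zk)\leq m$ implies $K$ is $m$-Golod, applied to $K(J)$ in place of $K$. Since $\cat(\z{K(J)})\leq m$ by the preceding step, this immediately gives that $K(J)$ is $m$-Golod, completing the proof. The only potential wrinkle is making sure the simplicial wedge $K(J)$ satisfies the hypotheses of Proposition~\ref{PRudyak}, but that proposition is stated for arbitrary simplicial complexes, so no extra checking is required. There is no substantive obstacle here; the work is entirely in the earlier propositions, and this corollary is a clean aggregation of them.
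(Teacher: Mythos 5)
Your proof is correct and follows exactly the paper's intended route: you use Propositions~\ref{Pd012} and~\ref{Pdinfinity} to identify $m=\filt(K)=\cuplen(\zk)=\cat(\zk)$, then Theorem~\ref{TSW} to transfer the category bound to $\z{K(J)}$, and finally Proposition~\ref{PRudyak} (applied to $K(J)$) to obtain $m$-Golodness. This is precisely the chain the paper describes when it says the corollary follows from combining Theorem~\ref{TSW} and Proposition~\ref{PRudyak}.
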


\subsection{Examples of complexes for which the cup product length does not determine the category}
There are two examples of simplicial complexes that clearly stand out as they are counterexamples to a conjecture and a theorem on the homotopy 
theoretical structures of moment-angle complexes associated with Golod simplicial complexes. Using $m$-annihilating property, we shall prove that in 
both cases the cup product length is strictly less than the category of these moment-angle complexes.

\begin{example}
\label{EKatthan}
For some time it was thought that for any monomial ideal $\mathfrak a$ in a polynomial ring $S=\Bbbk [x_1,\ldots, x_n]$ over some field $\Bbbk$, 
the ring $R=S/\mathfrak a$ is Golod if and only if the product in the Koszul homology $H_*(K_R)$ of $R$ is trivial~\cite{MR2344344}. 
Katth\" an~\cite{MR3627285} constructed a counterexample to this statement. Let $\Bbbk$ be a field, $S = \Bbbk[x_1,x_2,y_1,y_2,z]$ and let 
$\mathfrak a \subset S$ be the ideal with the following generators:
\begin{align}
	\ga{a} &:= x_1x_2^2 & \ga{ab} &:= x_1x_2y_1y_2  & \ga{\abSc} &:= x_1y_1z\\
	\ga{b} &:= y_1y_2^2 & \ga{bc} &:= y_2^2z^2 		& \ga{\bcSa} &:= x_2^2y_2^2z\\
	\ga{c} &:= z^3 	 & \ga{ca} &:= x_2^2z^2.
\end{align}
Then the product in $H_*(K_{S/\mathfrak a}) = \Tor^S_*(S/\mathfrak a, \Bbbk)$ is trivial, but $S/\mathfrak a$ is not Golod.
More precisely, there is a nonzero ternary Massey product. 

The polarisation of this ideal  is the Stanley-Reisner ideal of some simplicial complex $K$ of dimension $5$. By taking its $4$-skeleton, one obtains an 
example of a $4$-dimensional simplicial complex $L$, such that $\Bbbk [L]$ is not Golod but has a trivial product in its Koszul homology. 
We shall show that
\[
1=\cuplen(\mathcal Z_L)< \cat(\mathcal Z_L).
\]
As the cup product in $H^*(\mathcal Z_L)$ is trivial, $\cuplen(\mathcal Z_L)=1$. If $\cat(\mathcal Z_L)=1$, then by Proposition~\ref{PRudyak} 
 the moment-angle complex $\mathcal Z_L$ would be Golod. Therefore, $\cat(\mathcal Z_L)> \cuplen(\mathcal Z_L)$.
\end{example}
	
\begin{example}
In~\cite{MR3658721} the question whether $\zk$ is a co-$H$-space for $K$ Golod was studied and the authors showed that these two notions, 
one coming from algebra and another from topology, are very closely related. Recently, Iriye and Yano~\cite{MR3649878} described a Golod simplicial 
complex $K$ for which $\zk$ is not a co-$H$-space. It is straightforward to see that for the simplicial complex $K$,
\[
1=\cuplen(\mathcal Z_K)< \cat(\mathcal Z_K)
\]
as $\cat(\zk)\neq 1$ since $K$ is not Golod.
\end{example}

\bibliographystyle{amsplain}
\bibliography{mybibliography}

\end{document}